\newtheorem{theorem}{Theorem}
\newtheorem*{theorem*}{Theorem}
\newtheorem{lemma}[theorem]{Lemma}
\newtheorem{proposition}[theorem]{Proposition}
\newtheorem{corollary*}[theorem]{Corollary}
\newtheorem{thm}{Theorem}
\theoremstyle{definition}
\newtheorem{definition}[theorem]{Definition}
\theoremstyle{remark}
\newtheorem{remark}[theorem]{Remark}
\newtheorem{example}[theorem]{Example}
\numberwithin{theorem}{section}
\newcommand\cB{{\mathcal B}}
\newcommand\cC{{\mathcal C}}
\newcommand\cL{{\mathcal L}}
\newcommand\cP{{\mathcal P}}
\newcommand\cQ{{\mathcal Q}}
\newcommand\cV{{\mathcal V}}
\newcommand\cX{{\mathcal X}}
\newcommand\CC{{\mathbb C}}
\newcommand\DD{{\mathbb D}}
\newcommand\FF{{\mathbb F}}
\newcommand\HH{{\mathbb H}}
\newcommand\KK{{\mathbb K}}
\newcommand\NN{{\mathbb N}}
\newcommand\PP{{\mathbb P}}
\newcommand\RR{{\mathbb R}}
\renewcommand\SS{{\mathbb S}}
\newcommand\TT{{\mathbb T}}
\newcommand\UU{{\mathbb U}}
\newcommand\VV{{\mathbb V}}
\newcommand\ZZ{{\mathbb Z}}
\newcommand\ba{{\bm a}}
\newcommand\bu{{\bm u}}
\newcommand\bM{{\bm M}}
\newcommand\tP{{\widetilde{P}}}
\newcommand\tQ{{\widetilde{Q}}}
\newcommand{\0}{{\vmathbb 0}}
\newcommand{\1}{{\vmathbb 1}}
\newcommand\SetOf[2]{\left\{\left.#1\vphantom{#2}\ \right|\ #2\vphantom{#1}\right\}}
\newcommand\hseries[2]{#1\llbracket t^{#2} \rrbracket}
\newcommand\pseries[1]{#1\{\!\{ t \}\!\}}
\newcommand\iprod[2]{\langle{#1},{#2}\rangle}
\newcommand{\loopsum}{+_{\ell}}
\newcommand{\coloopsum}{+_{co\ell}}
\newcommand{\shortdots}{\!..}
\newcommand\quot{\twoheadrightarrow}
\newcommand\quotient{\twoheadrightarrow}
\DeclareMathOperator{\Gr}{Gr}
\DeclareMathOperator{\conv}{conv}
\DeclareMathOperator{\lc}{lc}
\DeclareMathOperator{\lt}{lt}
\DeclareMathOperator{\lp}{lp}
\DeclareMathOperator{\val}{val}
\DeclareMathOperator{\sval}{sval}
\DeclareMathOperator{\fval}{fval}
\DeclareMathOperator{\phval}{phval}
\DeclareMathOperator{\ph}{ph}
\DeclareMathOperator{\sign}{sign}
\DeclareMathOperator{\MinSupp}{MinSupp}
\DeclareMathOperator{\argmin}{argmin}
\DeclareMathOperator{\sgn}{sgn}
\DeclareMathOperator{\spn}{span}
\DeclareMathOperator{\tspan}{tspan}
\DeclareMathOperator{\rank}{rk}
\DeclareMathOperator{\initial}{in}
\def\metadata{\xdef\@thefnmark{}\@footnotetext}
\title{Matroids over tropical extensions of tracts}
\author{Ben Smith}
\begin{document}

\maketitle

\metadata{\textsc{Correspondence.} Lancaster University, LA1 4YF, United Kingdom. \texttt{b.smith9@lancaster.ac.uk}}

\metadata{\textsc{MSC Classes.}
	05B35, 
	12K99, 
	14T15 
  (Primary)
	14M15, 
	52B40, 
	52C40 
	(Secondary)
}

\metadata{\textsc{Keywords.} matroids over tracts; initial matroid; tropical extension; tropical linear space; hyperfield; flag matroid; positroid.}

\begin{abstract}
A tract $F$ is an algebraic structure where multiplication is defined but addition is only partially defined.
They were introduced by Baker and Bowler as a unified framework to study generalisations of matroids, including oriented and valuated matroids.
A \emph{tropical extension} $F[\Gamma]$ is a tract obtained by extending a tract $F$ by an ordered abelian group $\Gamma$.
Key examples include the tropical hyperfield as a tropical extension of the Krasner hyperfield, and the signed tropical hyperfield as a tropical extension of the sign hyperfield.

We study matroids over tropical extensions of tracts, including valuated matroids and oriented valuated matroids.
We generalise the correspondence between valuated matroids and their initial matroids, showing that $M$ is an $F[\Gamma]$-matroid if and only if every initial matroid $M^\bu$ is an $F$-matroid.
We also show analogous results for flag matroids and positroids over tropical extensions, utilising the circuit and covector descriptions we derive for $F[\Gamma]$-matroids.

We conclude by studying images of linear spaces in \emph{enriched valuations}, valuation maps enriched with additional data.
These give rise to \emph{enriched tropical linear spaces}, including signed tropical linear spaces as a key examples.
As an application of our results, we prove a structure theorem for enriched tropical linear spaces, generalising the characterisation of projective tropical linear spaces of Brandt-Eur-Zhang.
\end{abstract}

\section{Introduction}

A \emph{tract} $F = (F^\times, N_F)$ is generalisation of a field where multiplication is defined but summations are only remembered if they `sum to zero'.
Explicitly, $F^\times$ is a multiplicative abelian group and $N_F \subset \NN[F^\times]$ is a collection of formal sums known as the null set.
Tracts are a generalisation of both hyperfields and partial fields, both of whom have proven fundamental within matroid theory.
Partial fields are fields where addition is only partially defined, and were introduced to study problems of matroid representability~\cite{Semple+Whittle:1996,Pendavingh+vanZwam:2010}.
Moreover, Baker and Bowler recently demonstrated that matroids, oriented matroids and valuated matroids can all be viewed as matroids over hyperfields, specifically the Krasner hyperfield $\KK$, the sign hyperfield $\SS$ and the tropical hyperfield $\TT$ respectively~\cite{Baker+Bowler:2017}.
Soon after, matroids over tracts were introduced as a common generalisation of these frameworks~\cite{Baker+Bowler:2019}.

One definition is as follows.
Given a tract $F$, a (strong) \emph{$F$-matroid} $M$ on ground set $E$ of rank $r$ is a function $P_M \colon {E \choose r} \rightarrow F$ that satisfies the following `Pl\"ucker relations' over $F$
\begin{equation}\label{eq:GP}
\sum_{j \in J \setminus I} \sign(j;I,J) \cdot P_M(I+j)\cdot P_M(J-j) \in N_F \, , \quad \forall I \in {E \choose r-1} \, , \, J \in {E \choose r+1} \, .
\end{equation}
We call this function $P_M$ the \emph{Pl\"ucker vector} of $M$, and can be thought of as the `bases' definition of $F$-matroids.
In fact when $F= \KK$, equation \eqref{eq:GP} is precisely the strong basis exchange axiom for ordinary matroids.
As with ordinary matroids, $F$-matroids can be described by a number of cryptomorphic axiom systems, including circuits and (co)vectors~\cite{Anderson:2019, Bowler+Pendavingh:2019}.

Our key object throughout will be a tropical extension of a tract.
Given a tract $F$ and an ordered abelian group $\Gamma$, we define the \emph{tropical extension} $F[\Gamma]$ as the tract with multiplicative abelian group $F^\times \times \Gamma$, and null set
    \[
    N_{F[\Gamma]} = \left\{ \sum_{i\in I} (a_i, \gamma_i) \, \Big| \, \sum_{i \in I_{\min}} a_i \in N_F \text{ where } I_{\min} = \{i\in I \mid \gamma_i \leq \gamma_j \; \forall j \in I\}\right\} \cup \{\infty\} \, ,
    \]
where $\infty$ is the zero element of $F[\Gamma]$.
The tropical hyperfield $\TT \cong \KK[\RR]$ is itself a tropical extension, namely the tropical extension of the Krasner hyperfield $\KK$ by the ordered abelian group $(\RR,+)$.
Another key example is the signed (or real) tropical hyperfield $\TT_\pm \cong \SS[\RR]$, obtained as the tropical extension of the sign hyperfield $\SS$ by $(\RR,+)$.
The matroids over these special tropical extensions are valuated matroids ($\TT$-matroids) and oriented valuated matroids ($\TT_\pm$-matroids) respectively.
We demonstrate that many fundamental properties of valuated matroids extend to $F[\Gamma]$-matroids over arbitrary tropical extensions of tracts, already recovering new results for oriented valuated matroids.

Our jumping off point is the following property of valuated matroids that states they can be understood globally in terms of their local pieces called initial matroids.
Given a valuated matroid $M$ with Pl\"ucker vector $P \colon {E \choose r} \rightarrow \TT$ and some $\bu \in \RR^E$, its \emph{initial matroid} $M^\bu$  is the matroid with bases
\begin{equation} \label{eq:initial}
\cB(M^\bu) = \SetOf{B \in {E \choose r}}{P(B) - \sum_{i \in B} u_i \leq P(B') - \sum_{j \in B'} u_j \; \forall B' \in {E \choose r}} \, , \quad \bu \in \RR^n \, .
\end{equation}
Note that we can associate to $M^\bu$ the Pl\"ucker vector $P^\bu\colon {E \choose r} \rightarrow \KK$ defined as $P^\bu(B) = \1$ if and only if $B \in \cB(M^\bu)$.

Implicit in the work of Kapranov~\cite{Kapranov:1993}, it was shown independently by Murota and Speyer that valuated matroids are characterised by their initial matroids.
\begin{theorem*}\cite{Murota:2003, Speyer:2008} \label{thm:vm}
$M$ is a valuated matroid if and only if $M^\bu$ is a matroid for all $\bu \in \RR^E$.
\end{theorem*}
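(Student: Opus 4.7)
The plan is to prove the two directions separately, making critical use of the following \emph{shift invariance}: for any $\bu \in \RR^E$, $I \in \binom{E}{r-1}$, $J \in \binom{E}{r+1}$, and $j \in J \setminus I$, the quantity $\sum_{e \in I+j} u_e + \sum_{e \in J-j} u_e = \sum_{e \in I} u_e + \sum_{e \in J} u_e$ is independent of $j$.

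\textbf{Forward direction.} Assuming $M$ is a valuated matroid, fix $\bu \in \RR^E$; I would verify symmetric basis exchange for $M^\bu$. Given $B_1, B_2 \in \cB(M^\bu)$ and $i \in B_1 \setminus B_2$, apply the tropical Pl\"ucker relation at $I := B_1 \setminus \{i\}$ and $J := B_2 \cup \{i\}$: the minimum of $P(I+j) + P(J-j)$ over $j \in J \setminus I$ is attained at least twice. By shift invariance this matches the minimum of $[P(I+j) - \sum_{e \in I+j} u_e] + [P(J-j) - \sum_{e \in J-j} u_e]$, which at $j = i$ equals $2m(\bu)$, where $m(\bu) := \min_B[P(B) - \sum_{e \in B} u_e]$, and is bounded below by $2m(\bu)$ for every other $j$. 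A second minimizer $j \in B_2 \setminus B_1$ must force both summands to equal $m(\bu)$, so $B_1 - i + j$ and $B_2 + i - j$ both lie in $\cB(M^\bu)$.

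\textbf{Reverse direction.} Assume every $M^\bu$ is a matroid and argue by contradiction: suppose the Pl\"ucker relation fails at some $(I, J)$, so $v_j := P(I+j) + P(J-j)$ attains its finite minimum $v_{j^*}$ uniquely at some $j^* \in J \setminus I$. The strategy is to construct $\bu$ realizing both $I+j^*$ and $J-j^*$ as bases of $M^\bu$. Symmetric exchange in the matroid $M^\bu$ applied to $B_1 = I+j^*$, $B_2 = J-j^*$ at the element $j^* \in B_1 \setminus B_2$ then produces $k \in (J \setminus I) \setminus \{j^*\}$ with $I+k, J-k \in \cB(M^\bu)$, and shift invariance in reverse forces $v_k = v_{j^*}$, contradicting uniqueness.

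\textbf{Main obstacle.} The crux is the construction of $\bu$ in the reverse direction. My plan is to first set $u_e \ll 0$ for $e \notin I \cup J$, which forces $\cB(M^\bu) \subseteq \binom{I \cup J}{r}$ (valid since $v_{j^*} < \infty$ ensures $M$ has bases inside $I \cup J$), and then tune the values $u_e$ on $I \cup J$ so that $I+j^*$ and $J-j^*$ are joint minimizers of $P(B) - \sum_{e \in B} u_e$. The geometric input is that $\chi_{I+k} + \chi_{J-k} = \chi_I + \chi_J$ is constant over $k \in J \setminus I$, so all the pairs $(I+k, J-k)$ share the common midpoint $\tfrac{1}{2}(\chi_I + \chi_J)$; the uniqueness of $j^*$ places the lifted pair $(I+j^*, J-j^*)$ at the lowest height above this midpoint, so a supporting hyperplane of this lower chord of the lifted basis polytope yields the desired $\bu$. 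Ruling out encroachment by bases of $M$ outside the family $\{I+k, J-k\}$ can be handled by an inductive reduction to $E = I \cup J$ via contraction and deletion, operations that preserve both the hypothesis (each initial matroid is a matroid) and the conclusion (the Pl\"ucker relation at $(I, J)$).
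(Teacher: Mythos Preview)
Your forward direction is correct and is essentially the same argument as the paper's (the paper phrases it as ``if $P$ satisfies $\cP_{I;J}$ then so does $P^\bu$'', but your basis-exchange version is equivalent and the shift-invariance observation is exactly the mechanism).

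Your reverse direction is strategically sound but differs from the paper's route, and the difference exposes a real gap. The paper does not attempt the general Pl\"ucker relation: it restricts to the \emph{three-term} case $|J\setminus I|=3$ (which suffices since $\TT$ is perfect) and writes down $\bu$ explicitly, setting $u_s=\Omega$ for $s\in I\cap J$, $u_s=-\Omega$ for $s\notin I\cup J$, and prescribed finite values on the four elements of $(I\setminus J)\cup(J\setminus I)$. With that choice one checks by hand that the only candidates for $\cB(M^\bu)$ are the six sets $I+j_k,\,J-j_k$, and the failed relation forces $M^\bu$ to violate exchange.

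Your supporting-hyperplane idea is a legitimate alternative, but the step ``a supporting hyperplane of the lower chord yields the desired $\bu$'' is where it breaks. Reducing to $E=I\cup J$ by deletion does \emph{not} rule out encroachment: when $|J\setminus I|>3$ there remain many $r$-subsets of $I\cup J$ that are neither $I+k$ nor $J-k$, and any one of them can sit strictly below the chord $[(\chi_{I+j^*},P(I+j^*)),(\chi_{J-j^*},P(J-j^*))]$ at the common midpoint $\tfrac12(\chi_I+\chi_J)$, so the lower face through that midpoint need not contain either endpoint. Your proposed contraction of $I\cap J$ does not help either: after contracting you land in the case $I\cap J=\emptyset$, which is the \emph{longest} Pl\"ucker relation, with the most extraneous bases, not the fewest. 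The clean fix is to do what the paper does implicitly: first reduce to three-term relations (legitimate because weak and strong Pl\"ucker vectors coincide over $\TT$), and then, in addition to sending $u_e\to -\infty$ off $I\cup J$, send $u_e\to +\infty$ on $I\cap J$. In the three-term case this forces every competing basis to lie in $\{I+j_k,\,J-j_k:k=0,1,2\}$, and now your midpoint argument goes through verbatim.
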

Rephrasing this theorem in the language of tracts, we have that $M$ is a $\KK[\RR]$-matroid if and only if $M^\bu$ is a $\KK$-matroid for all $\bu \in \RR^E$.
Our first main result is that this theorem holds for general tropical extensions, where we replace $\KK$ with an arbitrary (perfect) tract $F$ and $\RR$ with any ordered abelian group.
This requires defining the initial $F$-matroid $M^\bu$ of an $F[\Gamma]$-matroid $M$ with respect to some $\bu \in \Gamma^E$, analogously to~\eqref{eq:initial}.
This is given in Definitions \ref{def:toric+initial+tuple} and \ref{def:initial+tuple}.

\begin{thm} \label{thm:A}
Let $F$ be a perfect tract and $\Gamma$ an ordered abelian group.
$M$ is an $F[\Gamma]$-matroid if and only if $M^\bu$ is an $F$-matroid for all $\bu \in \Gamma^E$.
\end{thm}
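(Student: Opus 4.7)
The plan is to reduce both implications to a term-by-term comparison of the Plücker relations over $F[\Gamma]$ and over $F$. Fix $I \in \binom{E}{r-1}$ and $J \in \binom{E}{r+1}$, write $P_M(B) = (a_B, \gamma_B) \in F^\times \times \Gamma$ (with the convention that any term containing an $\infty$ factor drops out), and set $\gamma(j) := \gamma_{I+j} + \gamma_{J-j}$ together with $a(j) := \sign(j;I,J)\cdot a_{I+j}\cdot a_{J-j}$. By the definition of $N_{F[\Gamma]}$, the $F[\Gamma]$-Plücker relation for $M$ at $(I,J)$ is equivalent to $\sum_{j \in \argmin \gamma} a(j) \in N_F$. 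The key identity is that $\sum_{i \in I+j} u_i + \sum_{i \in J-j} u_i = \sum_{i \in I} u_i + \sum_{i \in J} u_i =: C_\bu$ is independent of $j$, so writing $V^\bu := \min_B (\gamma_B - \sum_{i \in B} u_i)$ we obtain $\gamma(j) - C_\bu \geq 2V^\bu$ for every $j$, with equality precisely when both $I+j$ and $J-j$ lie in $\cB(M^\bu)$.

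For the forward direction, assume $M$ is an $F[\Gamma]$-matroid and fix $\bu \in \Gamma^E$. The nonzero terms of the $F$-Plücker relation for $M^\bu$ at $(I,J)$ are indexed by the $j$ for which both $I+j, J-j \in \cB(M^\bu)$; by the equality criterion above, this index set is precisely $\argmin \gamma$ whenever it is nonempty (and the relation is the trivial empty sum otherwise). The resulting $F$-sum is thus the leading-term sum of $M$'s Plücker relation at $(I,J)$, which lies in $N_F$ by assumption. As $(I,J)$ and $\bu$ were arbitrary, every $M^\bu$ is an $F$-matroid.

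For the backward direction, fix $(I,J)$ and seek $\bu$ (possibly living in a lexicographic extension $\Gamma' \supseteq \Gamma$) such that $\{j : I+j, J-j \in \cB(M^\bu)\} = \argmin \gamma$. With such a $\bu$, the $F$-Plücker relation of $M^\bu$ at $(I,J)$---which holds by hypothesis---is term-by-term the desired leading-term relation for $M$, so the required $N_F$-membership follows. The construction of $\bu$ combines a coarse weight penalising bases outside $I \cup J$ with finer perturbations that favour bases containing $I \cap J$ and equalise the shifted values of $I+j$ and $J-j$ at each minimising $j$, forcing the equality condition to occur at exactly the correct indices.

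The principal obstacle is the construction of $\bu$ in the backward direction: excluding bases of $M^\bu$ inside $I \cup J$ that are neither of the form $I+j$ nor $J-j$, and handling the possible lack of divisibility or density in $\Gamma$, both demand the flexibility provided by a lexicographic enlargement $\Gamma \hookrightarrow \Gamma \oplus \ZZ^N$. The perfect-tract hypothesis on $F$ enters here to ensure that the family of $F$-Plücker relations produced by varying $\bu$ (and in the enlarged group $\Gamma'$) certifies $M$ as a strong $F[\Gamma]$-matroid without needing any separate weak-to-strong upgrade.
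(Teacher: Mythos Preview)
Your forward direction is correct and matches the paper's Lemma~\ref{lem:ext+implies+initial}: the key identity $\sum_{i\in I+j}u_i+\sum_{i\in J-j}u_i=C_\bu$ does force the nonvanishing terms of the initial relation to be exactly the leading terms of the $F[\Gamma]$-relation (or none at all).

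The backward direction has a genuine gap. Your plan is to verify \emph{every} Pl\"ucker relation $\cP^{r}_{I;J}$ by manufacturing a $\bu$ with $\{j:I+j,J-j\in\cB(M^\bu)\}=\argmin\gamma$. This is impossible in general: Example~\ref{ex:strong+counterexample} exhibits a $|P|$ for which the four-term relation $\cP_{56;1234}$ has $\argmin\gamma=\{1,2,3,4\}$, yet \emph{no} $\bu$ puts both $156$ and $234$ in $\cB(M^\bu)$, because the complementary pair $(125,346)$ has strictly smaller $|P|$-sum. The obstruction is the inequality $|P(I+j)|+|P(J-j)|\le |P(B)|+|P(B^c)|$ for all $B$, which is forced whenever both $I+j,J-j$ minimise $|P|-\langle\bu,\cdot\rangle$; this inequality is independent of $\bu$ and survives any lexicographic enlargement $\Gamma\hookrightarrow\Gamma'$. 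So passing to $\Gamma'$ cannot rescue the construction, and in any case the hypothesis only supplies $M^\bu$ an $F$-matroid for $\bu\in\Gamma^E$, not for $\bu\in(\Gamma')^E$.

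You have also mislocated where perfectness enters. The paper's route is: restrict to \emph{three-term} relations (where $|J\setminus I|=3$), for which your $\bu$-construction \emph{does} work inside $\Gamma^E$ (this is Theorem~\ref{thm:matroid+subdivision}), concluding that $M$ is a weak $F[\Gamma]$-matroid; then invoke perfectness of $F$, hence of $F[\Gamma]$ by Theorem~\ref{thm:perfect+tract}, to upgrade weak to strong. Perfectness is not a tool for making the $\bu$-construction succeed on long Pl\"ucker relations---it is precisely the device that lets you avoid them.
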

As stated previously, this recovers the theorems of Murota and Speyer when $F = \KK$ and $\Gamma = \RR$.
When $F = \SS$ and $\Gamma = \RR$, we get a special case of this theorem stating that $M$ is an oriented valuated matroid if and only if each $M^\bu$ is an oriented matroid for $\bu \in \Gamma^E$.
This result can be deduced from~\cite{Celaya+Loho+Yuen:2024}, where it is proved in the language of polyhedral subdivisions rather than initial matroids.

Working over arbitrary tracts introduces technicalities where we must differentiate between weak and strong matroids.
Theorem~\ref{thm:A} is restricted to \emph{perfect tracts}, tracts where these notions coincide and so we do not have to worry about such technicalities.
We give analogous results over non-perfect tracts in Theorem~\ref{thm:matroid+subdivision} and Proposition~\ref{prop:ext+implies+initial}, along with a detailed example demonstrating failings over non-perfect tracts.

Jarra and Lorscheid extended the framework of matroids over tracts to flag matroids~\cite{Jarra+Lorscheid:2022}.
A \emph{flag $F$-matroid} $\bM = (M_1, \dots, M_k)$ is a tuple of $F$-matroids $M_i$ satisfying the `Pl\"ucker incidence relations' over $F$ (see Definition~\ref{def:flag+matroid}).
Key examples include flag $\KK$-matroids, which agree with the usual definition of flag matroids, and flag $\TT$-matroids, also known as valuated flag matroids.
The latter were studied in~\cite{Haque:2012} and \cite{Brandt+Eur+Zhang:2021}, where it was shown that $\bM$ is a valuated flag matroid if and only if all of its initial flag matroids $\bM^\bu$ are flag matroids.
Note that an \emph{initial flag matroid} $\bM^\bu = (M_1^\bu, \dots, M_k^\bu)$ is the tuple of initial matroids of each of its constituents.
Our second main theorem generalises this result (and extends Theorem~\ref{thm:A}) to flag matroids over tropical extensions of tracts.

\begin{thm}\label{thm:B}
Let $F$ be a perfect tract and $\Gamma$ an ordered abelian group.
Then $\bM = (M_1, \dots, M_k)$ is a flag $F[\Gamma]$-matroid if and only if $\bM^\bu = (M_1^\bu, \dots, M_k^\bu)$ is a flag $F$-matroid for all $\bu \in (\Gamma \cup \{\infty\})^E$.
\end{thm}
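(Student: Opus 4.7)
The plan is to lift the proof of Theorem~\ref{thm:A} from single Plücker relations to Plücker incidence relations. A flag $F[\Gamma]$-matroid $\bM$ is characterized by two conditions: each constituent $M_i$ is an $F[\Gamma]$-matroid, and any pair $(M_i, M_j)$ with $i < j$ satisfies the Plücker incidence relations over $F[\Gamma]$. The former condition is handled componentwise by Theorem~\ref{thm:A}, so the remaining task is to show that the incidence relations over $F[\Gamma]$ for $\bM$ correspond, under initial forms, to the incidence relations over $F$ for $\bM^\bu$ as $\bu$ ranges over $(\Gamma \cup \{\infty\})^E$.

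The key technical observation is the following. Fix $I, J \subseteq E$ with $|I| = r_i - 1$ and $|J| = r_j + 1$, and write $P_{M_i}(I+e) = (a_{i,I+e}, \gamma_{i,I+e})$ and similarly for $M_j$. For each $e \in J \setminus I$, the tropical value of $P_{M_i}(I+e)\, P_{M_j}(J-e)$ is $\gamma_{i,I+e} + \gamma_{j,J-e}$. Since $\bu(I+e) + \bu(J-e) = \bu(I) + \bu(J)$ is independent of $e$, one obtains the sharp inequality
\[
\gamma_{i,I+e} + \gamma_{j,J-e} \;\geq\; \min_{B_i}\bigl(\gamma_{i,B_i} - \bu(B_i)\bigr) + \min_{B_j}\bigl(\gamma_{j,B_j} - \bu(B_j)\bigr) + \bu(I) + \bu(J),
\]
with equality precisely when $I+e \in \cB(M_i^\bu)$ and $J-e \in \cB(M_j^\bu)$. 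Consequently, the $e$-indices achieving the tropical minimum in the $F[\Gamma]$ incidence relation are exactly those contributing nonzero terms to the $F$ incidence relation for $\bM^\bu$, and the $F$-parts coincide. By the definition of $N_{F[\Gamma]}$, membership of the $F[\Gamma]$ sum in $N_{F[\Gamma]}$ is then equivalent to the $F$-parts of these minimum terms lying in $N_F$, which is precisely the Plücker incidence relation for $\bM^\bu$ over $F$.

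The forward direction follows immediately by applying this equivalence at each $\bu$. For the backward direction, given a candidate incidence relation for $\bM$ over $F[\Gamma]$, I would choose $\bu$ so that the minimum-tropical terms of the relation are realised as basis pairs of $(M_i^\bu, M_j^\bu)$; in the spirit of the proof of Theorem~\ref{thm:A}, one should be able to take $\bu$ determined by the tropical slopes of $\gamma_{i,\cdot}$ and $\gamma_{j,\cdot}$ around $(I, J)$. The $F$-Plücker incidence relation for $\bM^\bu$, which holds by hypothesis, then yields the required null-set membership after scaling out the common tropical factor. Allowing $\bu$ to attain $\infty$ permits the simultaneous deletion of ground-set elements, which may be essential to carve out arbitrary collections of minimum-tropical terms.

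The main obstacle will be verifying that a single well-chosen $\bu \in (\Gamma \cup \{\infty\})^E$ can simultaneously control the bases of two initial matroids of different ranks so that their joint minimum structure matches the tropical minimum of the incidence relation—in the setting of Theorem~\ref{thm:A} only one matroid needs to be controlled, and the argument is correspondingly easier. Once this coupling is established, perfectness of $F$ ensures that the transfer from weak to strong flag matroid axioms goes through identically to the single-matroid case.
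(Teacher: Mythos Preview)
Your forward direction is essentially correct and matches the paper's Lemma~\ref{lem:incidence+pairs} and Lemma~\ref{lem:ext+implies+initial}: the observation that $\bu(I+e)+\bu(J-e)$ is independent of $e$ is exactly what makes the minimum-tropical indices coincide with the nonzero terms of the initial incidence relation, and this is how Proposition~\ref{prop:ext+implies+initial+flag} is proved.

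For the backward direction, the obstacle you flag is real and your proposal does not overcome it. The construction of $\bu$ in the proof of Theorem~\ref{thm:matroid+subdivision} works only for \emph{three-term} Pl\"ucker relations; perfectness is then invoked to pass from weak to strong. But an incidence relation $\cP^{r_i,r_j}_{I;J}$ between matroids of ranks $r_i<r_j$ has $|J\setminus I|\ge r_j-r_i+2$ terms, so there is no ``three-term'' reduction available, and no analogue of the weak/strong dichotomy for flag matroids to appeal to. Example~\ref{ex:strong+counterexample} already shows that for a single four-term relation one cannot always find a $\bu$ that isolates the minimum terms as basis pairs; for two matroids of different ranks the situation is no better. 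Your closing sentence about perfectness transferring ``weak to strong flag matroid axioms'' presupposes a notion that the paper does not define and that has no obvious meaning for incidence relations of unbounded length.

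The paper circumvents this entirely. It uses the covector characterisation of quotients over perfect tracts (Theorem~\ref{thm:flag+equivalence}): $(M_1,\dots,M_k)$ is a flag $F[\Gamma]$-matroid if and only if $\cV^*(M_1)\subseteq\cdots\subseteq\cV^*(M_k)$. If this chain fails, there is a single witness $X\in\cV^*(M_i)\setminus\cV^*(M_j)$, and Proposition~\ref{prop:covectors} immediately gives $\theta(X)\in\cV^*(M_i^{|X|})\setminus\cV^*(M_j^{|X|})$, so $\bM^{|X|}$ fails to be a flag $F$-matroid. No combinatorial choice of $\bu$ is needed; the witness $X$ supplies $\bu=|X|$ for free. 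The price is that one must first develop the circuit and covector structure of initial matroids (Section~\ref{sec:initial+circuits}), which is what makes Proposition~\ref{prop:covectors} available.
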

Again in the special case where $F = \SS$ and $\Gamma = \RR$, we obtain that $\bM$ is an flag oriented valuated matroid if and only if each $\bM^\bu$ is a flag oriented matroid.
To our knowledge, this is a new result on oriented valuated matroids.

The proof of Theorem~\ref{thm:B} requires us to know far more about the initial matroids than just its Pl\"ucker vector.
As such, Section~\ref{sec:initial+circuits} characterises duality and circuits for initial matroids (Proposition~\ref{prop:initial+dual} and Proposition~\ref{prop:initial+circuits} respectively).
This culminates in characterising the (co)vectors of $F[\Gamma]$-matroids in terms of (co)vectors of its initial matroids (Proposition~\ref{prop:covectors}).

A \emph{positroid} is a matroid representable by a real matrix, all of whose maximal minors are non-negative.
It can equivalently be defined as a positively oriented matroid~\cite{Ardila+Rincon+Williams:2017}, an $\SS$-matroid $M$ whose Pl\"ucker vector $P_M$ is `non-negative', i.e. $P_M(I) \in \SS_{\geq \0} := \{\0, \1\}$ for all subsets $I$.
We use this notion to define \emph{positroids over ordered tracts}.
We say a tract $F$ is \emph{ordered} if there exists a distinguished subset $F_{\geq \0}$ of `non-negative' elements satisfying certain properties of positive numbers (Definition~\ref{def:ordering}).
Given such an ordered tract, an \emph{$F$-positroid} $M$ is an $F$-matroid whose Pl\"ucker vector $P_M$ is non-negative, i.e. $P_M(I) \in F_{\geq \0}$ for all subsets $I$.
When $F = \SS$, we recover the usual definition of positroid, and when $F = \TT_{\pm}$, we recover positive valuated matroids (or positive tropical Pl\"ucker vectors).
Similarly, $\bM = (M_1, \dots, M_k)$ is a \emph{flag $F$-positroid} if it is a flag $F$-matroid, all of whose constituents $M_i$ are $F$-positroids.

Utilising the groundwork we have laid with Theorems~\ref{thm:A} and~\ref{thm:B}, we obtain positroid variants of both theorems.

\begin{thm} \label{thm:C}
Let $F$ be an ordered perfect tract and $\Gamma$ an ordered abelian group.
\begin{enumerate}[label=(\roman*)]
\item $M$ is an $F[\Gamma]$-positroid if and only if $M^\bu$ is an $F$-positroid for all $\bu \in \Gamma^E$.
\item $\bM = (M_1, \dots, M_k)$ is a flag $F[\Gamma]$-positroid if and only if $\bM^\bu = (M_1^\bu, \dots, M_k^\bu)$ is a flag $F$-positroid for all $\bu \in (\Gamma \cup \{\infty\})^E$.
\end{enumerate}
\end{thm}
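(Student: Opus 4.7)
The plan is to factor each biconditional into two pieces: the underlying matroid/flag matroid structure, which is handled by Theorems~\ref{thm:A} and~\ref{thm:B}, and the positivity condition on the Pl\"ucker vector, which will be tracked directly. The first order of business is to pin down the ordering on $F[\Gamma]$ induced by the ordering on $F$, which should declare $(a,\gamma) \in F^\times \times \Gamma$ to be non-negative exactly when $a \in F_{\geq \0}$, together with $\infty \in F[\Gamma]_{\geq \0}$ by convention. Granting the natural compatibility with the ordered tract axioms of Definition~\ref{def:ordering}, the rest of the proof essentially reduces to bookkeeping between $P_M$ and the initial Pl\"ucker vectors $P^\bu$.

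For part~(i), Theorem~\ref{thm:A} already reduces the matroid portion. It then suffices to observe that by the definition of the initial $F$-matroid, for each basis $B \in \binom{E}{r}$ the value $P^\bu(B)$ is either $\0$ or equals the $F$-component of $P_M(B)$, depending on whether or not $B$ attains the minimum $\bu$-weight. Hence if $P_M$ is non-negative then every $P^\bu$ is non-negative, giving the forward direction. For the reverse direction, given $B$ with $P_M(B) = (a_B,\gamma_B) \neq \infty$, I would choose $\bu \in \Gamma^E$ making $B$ the unique $\bu$-minimizer among the support of $P_M$ (for instance, by placing large positive values of $\Gamma$ on $E \setminus B$); then $P^\bu(B) = a_B$, which lies in $F_{\geq \0}$ by the hypothesis, so $(a_B,\gamma_B) \in F[\Gamma]_{\geq \0}$. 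Bases with $P_M(B)=\infty$ are non-negative automatically.

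Part~(ii) combines Theorem~\ref{thm:B} and part~(i) modulo an index-set check. Unpacking the definition, $\bM$ is a flag $F[\Gamma]$-positroid iff $\bM$ is a flag $F[\Gamma]$-matroid and each constituent $M_i$ is an $F[\Gamma]$-positroid. By Theorem~\ref{thm:B} the first condition is equivalent to every $\bM^\bu$ being a flag $F$-matroid for $\bu \in (\Gamma\cup\{\infty\})^E$, while by part~(i) the second condition is equivalent to every $M_i^\bu$ being an $F$-positroid for $\bu \in \Gamma^E$. To close the loop, I would extend the positivity assertion from $\bu \in \Gamma^E$ to $\bu \in (\Gamma\cup\{\infty\})^E$: an $\infty$-coordinate of $\bu$ simply deletes elements from the ground set, and the resulting initial Pl\"ucker vector is obtained by restricting $P_{M_i}$ to bases avoiding those elements and then taking $F$-components, which preserves non-negativity.

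The main obstacle I anticipate is the reverse direction of part~(i): one must be sure that enough separating $\bu$'s exist to isolate every basis individually. This should be automatic whenever $\Gamma$ is nontrivial, but if $\Gamma$ is the trivial group then $F[\Gamma]=F$ and the statement collapses, so no separate argument is required. A secondary nuisance is verifying that the ordering on $F[\Gamma]$ satisfies the axioms of an ordered tract (so that the phrase ``$F[\Gamma]$-positroid'' even makes sense in part~(i)); this is where the bulk of the actual verification will likely reside, and it reduces to checking that sums of non-negative elements in $N_{F[\Gamma]}$ remain governed by the non-negative elements in $N_F$ on the minimum-$\gamma$ layer, which follows directly from the definitions of $N_{F[\Gamma]}$ and $F_{\geq\0}$.
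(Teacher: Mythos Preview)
Your approach to part~(i) is correct and is exactly the paper's argument, just with more detail filled in. The paper compresses your two directions of the positivity equivalence into the single sentence ``It is straightforward from the definitions that $P\colon\binom{E}{r}\to F[\Gamma]_{\geq\0}$ if and only if $P^{\bu}\colon\binom{E}{r}\to F_{\geq\0}$,'' and then invokes Theorem~\ref{thm:A} (and its weak/strong refinements) for the matroid structure. Your worry about separating $\bu$'s is legitimate but, as you note, routine once $\Gamma$ is nontrivial.

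There is, however, a genuine slip in your treatment of part~(ii). You write that ``an $\infty$-coordinate of $\bu$ simply deletes elements from the ground set, and the resulting initial Pl\"ucker vector is obtained by restricting $P_{M_i}$ to bases avoiding those elements.'' This is not what Definition~\ref{def:initial+tuple} says: the non-toric initial matroid is $M^{\bu}=(M/Z_{\bu})^{\bu^\circ}\coloopsum Z_{\bu}$, built from \emph{contraction} followed by coloop extension, not deletion. Contraction can introduce signs into the Pl\"ucker coordinates (already over $\SS$: contracting the middle element of $U_{2,3}$ with chirotope $(+,+,+)$ yields chirotope $(+,-)$), and the Pl\"ucker formula for $\coloopsum$ in Proposition~\ref{prop:loop+coloop} likewise hides sign issues depending on how $Z_{\bu}$ interleaves with the remaining ground set. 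So your one-line justification does not go through as stated. The paper's proof is equally terse on this point and does not spell out the non-toric case either; if you want to close this gap honestly, you should argue via the circuit description of $M^{\bu}$ in Proposition~\ref{prop:initial+circuits} rather than via the Pl\"ucker formulas, since the initial circuits $C^{\bu}$ are manifestly obtained from $C$ by zeroing coordinates and taking $\theta$, with no stray signs.
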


In the case where $F[\Gamma] = \SS[\RR] = \TT_\pm$, this theorem recovers a number of results regarding the totally non-negative Dressian.
The claim that $\TT_\pm$-positroids are precisely those oriented valuated matroids whose initial matroids are $\SS$-positroids was proved independently by~\cite{Arkani-Hamed+Lam+Spradlin:2020} and~\cite{Lukowski+Parisi+Williams:2020}.
The claim for flag $\TT_\pm$-positroids was can be deduced for full flags from~\cite{Joswig+Loho+Luber+Olarte:2023} and for partial flags from~\cite{Boretsky+Eur+Williams:2023}.

As an application of our results, we dedicate Section~\ref{sec:tropical+linear+spaces} to the study of \emph{enriched tropical linear spaces}.
These are sets $\cL_M \subseteq F[\Gamma]^E$, defined as those points orthogonal to the circuits of a $F[\Gamma]$-matroid $M$.
When $F=\KK$, these are ordinary tropical linear spaces, hence they can be considered as tropical linear spaces enriched with additional data coming from the tract $F$.
For example, when $F = \SS$ these are precisely \emph{signed tropical linear spaces}, linear spaces over the signed tropical hyperfield $\TT_\pm$.

Utilising our previous work and known cryptomorphisms of matroids over tracts, we obtain a structure theorem for enriched tropical linear spaces.
This theorem exactly recovers the characterisation of projective tropical linear spaces of Brandt, Eur and Zhang \cite[Theorem B (i)-(iv)]{Brandt+Eur+Zhang:2021} in the case where $F = \KK$.
Moreover, it gives a number of new characterisations of signed tropical linear spaces in the case where $F = \SS$.
\begin{thm} \label{thm:D}
Let $M$ be an $F[\Gamma]$-matroid on $E$ where $F$ is a perfect tract.
The enriched tropical linear space $\cL_M$ corresponding to $M$ is equal to any of the following sets in $F[\Gamma]^E$:
\begin{enumerate}[label=(\roman*)]
\item $\bigcup\limits_{\emptyset \subseteq A \subseteq E} \cL_{M/A}^\circ \times \{\infty\}^A \, ,$ the set of tuples contained in the finite part of a contraction of $\cL_M$;
\item $\cC(M)^\perp \, ,$ the set of tuples orthogonal to the circuits $\cC(M)$ of $M$;
\item $\left\{X \in F[\Gamma]^E \mid \theta(X) \in \cV^*(M^{|X|}) \right\} \, ,$ the set of tuples whose initial $F$-matroid $M^{|X|}$ contains the $F$-part $\theta(X)$ of $X$ as a covector;
\item $\bigcap\limits_{B \in \cB(\underline{M})} {\rm{span}}(\cC^*_B(M)) \, ,$ the set of tuples in the span of all fundamental cocircuits of $M$.
\end{enumerate}
\end{thm}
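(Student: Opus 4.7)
The plan is to show that each of the sets (i)--(iv) equals the enriched tropical linear space $\cL_M = \cC(M)^\perp$, with Proposition~\ref{prop:covectors} providing the key bridge between $F[\Gamma]$-matroids and their initial $F$-matroids. Equality with (ii) holds by the definition of $\cL_M$.

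For (ii) $\Longleftrightarrow$ (iii), I would appeal directly to Proposition~\ref{prop:covectors}: a tuple $X \in F[\Gamma]^E$ is orthogonal to every circuit of $M$ if and only if its $F$-part $\theta(X)$ is a covector of the initial $F$-matroid $M^{|X|}$. Combined with the circuit/covector cryptomorphism over perfect tracts from~\cite{Anderson:2019, Bowler+Pendavingh:2019}, which identifies $\cV^*(M) = \cC(M)^\perp$, this yields the equivalence.

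For (ii) $\Longleftrightarrow$ (i), I would stratify $X \in F[\Gamma]^E$ by its $\infty$-support $A = \{e \in E \mid X_e = \infty\}$. Since $\infty$ is the null element of $F[\Gamma]$, the pairing $\iprod{X}{C}$ with any circuit $C \in \cC(M)$ depends only on the coordinates indexed by $E \setminus A$, and matches---via the standard description of circuits of a contraction---the pairing of the restriction $X|_{E \setminus A}$ against $\cC(M/A)$. Hence $X \in \cL_M$ precisely when this restriction lies in the finite part $\cL_{M/A}^\circ$, and the union over $A$ recovers (i).

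For (ii) $\Longleftrightarrow$ (iv), I would prove each inclusion separately. For $\cL_M \subseteq \bigcap_B \spn(\cC^*_B(M))$, I would show every covector admits a fundamental-cocircuit expansion with respect to any basis $B$, reducing---once more via Proposition~\ref{prop:covectors}---to the classical $F$-matroid fact that fundamental cocircuits form a basis for the cocircuit space. For the reverse inclusion, I would argue that being in every $\spn(\cC^*_B(M))$ imposes enough circuit-orthogonality conditions, as $B$ varies over $\cB(\underline{M})$, to force membership in $\cC(M)^\perp$. The main obstacle here is the partial nature of addition over $F[\Gamma]$: tropical sums of fundamental cocircuits need not remain covectors, so each individual span is typically strictly larger than $\cL_M$, and only the intersection cuts out the linear space. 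I expect the cleanest route is to perform all linear-algebraic bookkeeping initial-matroid by initial-matroid, where $F$-sums are honest, and then transport conclusions back to $F[\Gamma]$ through the covector cryptomorphism.
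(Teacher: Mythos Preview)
Your handling of (ii), (iii), and (i) matches the paper's proof closely: (ii) is the definition, (ii)$\Leftrightarrow$(iii) is exactly Proposition~\ref{prop:covectors}, and for (i) both you and the paper stratify by the set $A$ of $\infty$-coordinates. The paper invokes \cite[Proposition~4.5(i)]{Anderson:2019} for the identification of the stratum with $\cL_{M/A}^\circ$, whereas you sketch the argument via the circuit description of contractions; these are the same content.

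The meaningful divergence is in (iv). The paper does \emph{not} use the tropical-extension structure at all here: it simply cites \cite[Lemma~2.16]{Anderson:2019}, which already establishes $\cV^*(M)=\bigcap_{B}\spn(\cC^*_B(M))$ for matroids over an arbitrary tract, hence in particular over $F[\Gamma]$. Your plan instead routes through initial matroids via Proposition~\ref{prop:covectors}, proving the span characterisation at the $F$-level and then ``transporting conclusions back''. The problem is that Proposition~\ref{prop:covectors} relates covectors of $M$ to covectors of $M^{|X|}$, not spans to spans: you would need a separate lemma saying $X\in\spn_{F[\Gamma]}(\cC^*_B(M))$ if and only if $\theta(X)\in\spn_F(\cC^*_B(M^{|X|}))$, and this is not available in the paper (nor is it obvious, since fundamental cocircuits of $M^{|X|}$ are only \emph{some} of the initial cocircuits $D^{-|X|}$ by Proposition~\ref{prop:initial+circuits}). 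So your route for (iv) is not wrong in spirit but is both unnecessarily indirect and currently incomplete; the paper's one-line citation sidesteps all of this.
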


To justify our definition, we also consider `tropicalisations' of linear spaces over a field $\FF$ via \emph{enriched valuations}.
These are homomorphisms $\nu \colon \FF \rightarrow F[\Gamma]$ to a tropical extension that can be considered as a valuation map that records additional $F$-data about field elements.
The prototypical example is the signed valuation $\sval \colon \FF \rightarrow \TT_\pm$ that records the sign of an element.
However, there are many other natural examples including the \emph{fine valuation} $\fval\colon \pseries{\FF} \rightarrow \FF[\RR]$ that maps a Puiseux series to its leading term.
We show that for any linear space $L$ over $\FF$ and enriched valuation $\nu \colon \FF \rightarrow F[\Gamma]$, the image $\nu(L)$ is naturally contained in an enriched tropical linear space.
Moreover, we show that for a number of key examples of enriched valuations, the image $\nu(L)$ is precisely an enriched tropical linear space (Proposition~\ref{prop:tropicalisation}).

We conclude with a number of extended examples of enriched tropical linear spaces, and how to view and compute them.

\subsection{Motivation and related work}\label{sec:related+work}

\paragraph{Valuated matroids and their initial matroids}
Valuated matroids were introduced by Dress and Wenzel~\cite{Dress+Wenzel:1992-valuated} as an abstraction of matroids representable over a valued field, and have applications in a number of fields including optimisation~\cite{Murota:2003} and economics~\cite{Ostrovsky+PaesLeme:2015}.
They have received significant attention within tropical geometry due to their equivalence with tropical linear spaces~\cite{Speyer:2008,Rincon:2013}.
In particular, the valuated basis exchange axiom is exactly the Pl\"ucker relations over the tropical hyperfield, defining a moduli space of valuated matroids known as the \emph{Dressian}.
Understanding and computing the Dressian and its fan structure is a major area of research within tropical geometry~\cite{Herrmann+Joswig+Speyer:2014, Olarte+Panizzut+Schroter:2019, Brandt+Speyer:2022, Bendle+Bohm+Ren+Schroter:2024}.
Valuated matroids have also received significant attention from tropical scheme theory, as they are central to the definition of tropical ideals~\cite{Maclagan+Rincon:2018}.

A useful property of valuated matroids is that they can be understood globally by their local pieces known as initial matroids.
As discussed previously, $M$ is a valuated matroid if and only if all of its initial matroids $M^\bu$ are matroids.
This paradigm can be extended to various families of valuated matroids.
Motivated by the study of Stiefel tropical linear spaces initiated in~\cite{Fink+Rincon:2015}, Fink and Olarte showed that $M$ is a transversal valuated matroid if and only if all of its initial matroids $M^\bu$ are transversal matroids~\cite{Fink+Olarte:2022}.
In their study of the positive tropical Grassmannian, it was shown independently by~\cite{Arkani-Hamed+Lam+Spradlin:2020} and~\cite{Lukowski+Parisi+Williams:2020} that $M$ is a valuated positroid if and only if each of its initial matroids are positroids.
This paradigm has also been extended to flag matroids, where~\cite{Haque:2012} and~\cite{Brandt+Eur+Zhang:2021} showed that $M$ is a valuated flag matroid if and only if each of its initial flag matroids are flag matroids.
This has also been generalised recently to linear degenerate valuated flag matroids~\cite{Borzi+Schleis:2023}. 
These results demonstrate that initial matroids are powerful classifiers of families of valuated (flag) matroids.

\paragraph{Extensions of tropical geometry}
Tropical geometry is the study of images of varieties under a non-archimedean valuation.
Equivalently, it can be viewed as the geometry of varieties over the tropical semiring $(\RR \cup \{\infty\}, \min, +)$.
As the theory of tropical geometry has progressed, there have been attempts to work over richer structures than the tropical semiring.
One such approach is to replace semirings with \emph{hyperfields}, field-like objects whose addition operation may be multi-valued, as pioneered in the articles of Viro~\cite{Viro:2010,Viro:2011}.
In particular, the tropical semiring can be enriched to the \emph{tropical hyperfield} by replacing $\min$ with the hyperoperation
\[
a \boxplus b = \begin{cases} \min(a,b) & a \neq b \\ [a, \infty] & a = b \end{cases} \, .
\]
This makes a number of facets of tropical geometry more natural: tropical varieties are now genuine algebraic varieties over $\TT$, and valuation maps are precisely hyperfield homomorphisms to $\TT$.
Further benefits of the hyperfield structure are given in~\cite{Lorscheid:2022} as an approach to tropical scheme theory.

Hyperfields have the additional benefit that they give a flexible framework to work with finer maps than a non-archimedean valuation, or equivalently to work over tropical hyperfields with additional data.
The most notable of these is the \emph{signed valuation} of an ordered valued field, which records the sign of an element in addition to its valuation.
Studying images of objects in this map results in geometry over the signed (or real) tropical hyperfield $\TT_\pm$,
with close ties to real algebraic geometry via Mazlov dequantization and Viro's patchworking method~\cite{Viro:1984}. 
Such objects studied this way includes real algebraic varieties~\cite{Tabera:2015,Rau+Renaudineau+Shaw:2023} and semialgebraic sets~\cite{Allamigeon+Gaubert+Skomra:2020, Jell+Scheiderer+Yu:2022, Loho+Skomra:2024}.
In particular, there has been significant contributions to the study of signed tropical linear spaces~\cite{Jurgens:2018, Celaya:2019, Rau+Renaudineau+Shaw:2022, Loho+Skomra:2024,Celaya+Loho+Yuen:2024}, which we review in detail in Remark~\ref{rem:signed+survey}.
Other examples of richer valuation maps includes the complexified valuation map which records both the valuation and the phase of an element.
This was used by Mikhalkin to enumerate curves in toric surfaces~\cite{Mikhalkin:2005}, and motivated a complex analogue of Mazlov dequantization introduction by Viro~\cite{Viro:2011}.
One can also consider a fine valuation map on the field of Puiseux series that records the leading term rather than just the leading power: such maps are implicitly used in constructing initial solutions in polyhedral homotopy theory~\cite{Leykin+Yu:2019}.
These valuation-like maps were dubbed \emph{enriched valuations} in~\cite{Maxwell+Smith:2023}, defined as homomorphisms to tropical extensions of hyperfields.

The tropical extension of a tract is not a novel concept: the construction had been defined and studied for a number of algebraic structures, often under the guise of \emph{layering}.
For single-valued structures, it has been used to study idempotent semirings~\cite{Akian+Gaubert+Guterman:2009, Akian+Gaubert+Guterman:2014, Izhakian+Knebusch+Rowen:2014}, motivated by methods from tropical linear algebra and solving polynomial systems.
These applications have been extended recently to \emph{semiring systems} as a bridge between semirings and hyperstructures~\cite{Rowen:2021, Akian+Gaubert+Rowen:2022, Akian+Gaubert+Tavakolipour:2023}.
For multivalued structures, tropical extensions of hyperfields were defined in~\cite{Bowler+Su:2021} to classify various families of hyperfields, including stringent and doubly distributive hyperfields.
These classifications were then used in~\cite{Bowler+Pendavingh:2019} to prove such hyperfields were perfect.
The construction was also defined in~\cite{Gunn:2022} for \emph{idylls}, tracts whose null set form an ideal of the group semiring.
This was used to extend results from~\cite{Baker+Lorscheid:2021-polynomial} on  multiplicities of roots of polynomials over $\KK, \SS$ and $\TT$ to all stringent hyperfields and beyond.

\paragraph{Matroids over tracts}
Matroids over tracts were introduced in the articles of Baker and Bowler~\cite{Baker+Bowler:2017,Baker+Bowler:2019} with heavy inspiration from Anderson and Delucchi's work on complex matroids~\cite{Anderson+Delucchi:2012}.
This has lead to an explosion of developments in the past five years, of which the following is a subset.
A cryptomorphic axiom system in terms of (co)vectors was developed for arbitrary tracts by Anderson~\cite{Anderson:2019}, with simplified axioms via (co)vector composition developed for stringent hyperfields by Bowler and Pendavingh \cite{Bowler+Pendavingh:2019}.
The initial advances in matroid representability via partial fields~\cite{Pendavingh+vanZwam:2010} have been developed further in this framework~\cite{Baker+Lorscheid:2021-lift}, along with further progress via the introduction of foundations of matroids~\cite{Baker+Lorscheid:2024}. 
Moduli spaces of $F$-matroids were first approached topologically in~\cite{Anderson+Davis:2019}, and then scheme-theoretically in~\cite{Baker+Lorscheid:2021-moduli}.
The framework of matroids over tracts has been since generalised to both flag matroids~\cite{Jarra+Lorscheid:2022} and orthogonal matroids~\cite{Jin+Kim:2023, Baker+Jin:2023}.

\subsection{Structure of the paper}

In Section~\ref{sec:preliminaries}, we review the necessary preliminaries of tracts and matroids over them.
The majority of this material is a retread of known concepts, but we do introduce one new operation, namely extension by (co)loops.

In Section~\ref{sec:trop+ext+matroids}, we study matroids over tropical extensions of tracts.
After introducing initial matroids, we prove Theorem~\ref{thm:A} for perfect tracts and its variants for non-perfect tracts, namely for strong matroids (Proposition~\ref{prop:ext+implies+initial}) and weak matroids (Theorem~\ref{thm:matroid+subdivision}).
In addition, we give a detailed example demonstrating there exists a weak $F[\Gamma]$-matroid whose initial $F$-matroids are all strong (Example~\ref{ex:strong+counterexample}).
In Section~\ref{sec:initial+circuits}, we study the structure of initial matroids in more detail.
We describe their duals (Proposition~\ref{prop:initial+dual}) and circuits (Proposition~\ref{prop:initial+circuits}).
Utilising these characterisations, we end with a description of the covectors of matroids over tropical extensions via initial matroids (Proposition~\ref{prop:covectors}).

In Section~\ref{sec:flags}, we study flag matroids over tropical extensions of tracts.
Using the structure of initial matroids developed in the previous section, we prove Theorem~\ref{thm:B} characterising flag matroids via their initial flag matroids.
In Section~\ref{sec:positroids}, we introduce the notion of a positroid over an ordered tract.
Our tract setup gives us a quick proof of Theorem~\ref{thm:C}, a characterisation of positroids over tropical extensions of ordered tracts in terms of their initial matroids.

In Section~\ref{sec:tropical+linear+spaces}, we finish by applying the theory developed in the previous sections to tropical linear spaces.
In Section~\ref{sec:tropical+geometry}, we first review some necessary preliminaries of (extensions of) tropical geometry and enriched valuations.
We then define enriched tropical linear spaces in Section~\ref{sec:enriched+tls} via matroids over tropical extensions.
We first prove Theorem~\ref{thm:D}, four different characterisations of enriched tropical linear spaces.
We then motivate our definition further by showing that the tropicalisation of a linear space is always contained in an enriched tropical linear space, with equality in multiple key examples (Proposition~\ref{prop:tropicalisation}).
We finish with a number of extended examples computing enriched tropical linear spaces in Section~\ref{sec:enriched+examples}.

\section*{Acknowledgements}
Thanks to Alex Fink, Jorge Alberto Olarte, Victoria Schleis and Chi Ho Yuen for helpful conversations, and Georg Loho and James Maxwell for useful comments and feedback on an earlier draft.
We also thank the organisers and speakers of the MaTroCom workshop at Queen Mary University of London, where discussions prompted this project.
The author is supported by EPSRC grant EP/X036723/1.

\section{Preliminaries} \label{sec:preliminaries}

We begin by recalling the necessary preliminaries for tracts and matroids over them.
Much of the material given can be found in~\cite{Baker+Bowler:2019, Anderson:2019,Jarra+Lorscheid:2022}.

\subsection{Tracts}

\begin{definition}
A \emph{tract} $F = (F^\times, N_F)$ is a multiplicative abelian group $(F^\times,\cdot)$ with a subset $N_F$ of the group semiring $\NN[F^\times]$, called the \emph{null set}, satisfying all of the following:
\begin{enumerate}[label=(T\arabic*)]
    \item the zero element of $\NN[F^\times]$, denoted $\0$, belongs to $N_F$,
    \item the identity element of $F^\times$, denoted $\1$, is not in $N_F$,
    \item there is a unique element $-\1$ such that $\1 + (-\1) \in N_F$,
    \item if $\sum a_i \in N_F$, then $\sum c \cdot a_i \in N_F$ for any $c \in F^\times$ .
\end{enumerate}
\end{definition}
We will often refer to the tract as the underlying set $F = F^\times \cup \{\0\}$, where multiplication can be extended to $F$ by setting $\0 \cdot g = \0$.
The multiplicative group $F^\times$ can be thought of as the multiplicative units of a `field'.
While tracts do not have a formally defined addition, one should think of the null set as summations `equivalent' to $\0$.

\begin{example}[Hyperfields]
Our main source of examples will be \emph{hyperfields} $(\HH, \boxplus, \odot)$, field-like objects with multi-valued addition.
The hyperaddition on $\HH$ is a commutative binary operation $\boxplus\colon \HH \times \HH \rightarrow 2^\HH \setminus \emptyset$ that maps $(a,b)$ to a non-empty set $a\boxplus b$.
This hyperaddition is extended to strings of elements recursively via
\[
a_1 \boxplus \cdots \boxplus a_m = \bigcup_{a \in a_1 \boxplus \cdots \boxplus a_{m-1}} a \boxplus a_m \, ,
\]
and is required to be associative.
The hyperfield $\HH$ then satisfies the usual axioms of a field with an alteration to the additive inverses axiom, and an additional reversibility axiom:
\begin{itemize}
\item (Inverses) For all $a \in \HH^\times$, there exists a unique $(-a) \in \HH^\times$ such that $\0 \in a \boxplus -a$,
\item (Reversibility) $a \in b \boxplus c$ if and only if $c \in a \boxplus -b$ for all $a,b,c \in \HH$.
\end{itemize}
Hyperfields can be viewed as a tract by taking $F^\times = (\HH^\times, \odot)$ and setting the null set to be
\[
N_{\HH} = \SetOf{\sum_{i \in I} a_i \in \NN[\HH^\times]}{\0 \in \bigboxplus_{i \in I} a_i} \, .
\]
As the null set completely determines $\boxplus$, the tract structure and hyperfield structure on $\HH$ are equivalent.
However, the latter is frequently easier to work with in explicit examples and so we will often do so.
\end{example}

\begin{example}
The \emph{Krasner hyperfield} is the set $\KK := \{\0,\1\}$ with the standard multiplication and hyperaddition defined as
\[
\0 \boxplus \0 = \0 \quad , \quad \0 \boxplus \1 = \1 \boxplus \0 = \1 \quad , \quad \1 \boxplus \1 = \{ \0 , \1\} \, .
\]
As a tract, its abelian group is the trivial group $\KK^\times = \langle \1\rangle$ with null set
\[
N_\KK = \{\0, \1+\1, \1+\1+\1, \dots\} = \SetOf{k\cdot \1}{k \geq 2} \cup \{\0\} \, .
\]
\end{example}
\begin{example}
The \emph{sign hyperfield} is the set $\SS :=\{\1,\0,-\1\}$ with standard multiplication and hyperaddition defined as
\[
\0 \boxplus a = a \quad , \quad a \boxplus a = a \quad , \quad \1 \boxplus -\1 = \{\1,\0,-\1\} \quad \forall a \in \SS .
\]
As a tract, its abelian group is $\SS^\times = \langle +\1, -\1\rangle \cong C_2$ with null set
\[
N_\SS = \SetOf{k\cdot \1 + \ell\cdot(-\1)}{k, \ell \geq 1} \cup \{\0\} \, .
\]
\end{example}
\begin{example}
The \emph{tropical hyperfield} is the set $\TT := \RR \cup \{\infty\}$ with hyperfield operations
\begin{align*}
\gamma \boxplus \eta &= 
\begin{cases}
\min(\gamma,\eta)  &\text{if} \quad \gamma \neq \eta\\
\{ \gamma' \in \TT \, | \, \gamma' \geq \gamma\} &\text{if} \quad \gamma = \eta
\end{cases} \, , \\
\gamma \odot \eta &= \gamma + \eta \, .
\end{align*}
As a tract, its abelian group is $\TT^\times \cong (\RR, +)$ with null set
\[
N_\TT = \SetOf{\sum_{i \in I} \gamma_i}{\exists i, j \in I \text{ s.t. } \gamma_i = \gamma_j \leq \gamma_k \forall k \in I} \, ,
\]
i.e. $\gamma_1 + \cdots + \gamma_k \in N_\TT$ if and only if the minimum of the $\gamma_i$'s is attained at least twice.
Note that the additive and multiplicative identity elements are $\0 = \infty$ and $\1 = 0$.
\end{example}

\begin{example}\label{ex:phase}
The \emph{phase hyperfield} is the set $\Theta := S^1 \cup \{0\}$ where $S^1 = \SetOf{z \in \CC}{|z| = 1}$ with hyperfield with operations
\begin{align*}
    z_1 \odot z_2 &= z_1\cdot z_2 \\
    z_1 \boxplus z_2 &= \begin{cases}
        z_1 & z_1 = z_2 \\
        \{z_1, 0, -z_1\} & z_1 = -z_2 \\
        \text{shortest open arc between } z_1, z_2 & \text{otherwise}
    \end{cases}
\end{align*}
As a tract, its abelian group is $S^1$ with null set
\[
N_\Theta = \SetOf{\sum_{i \in I} z_i}{\exists w_i \in \CC^\times \text{ s.t } \sum_{i \in I} w_i = 0_\CC \text{ and } \arg(w_i) = z_i} \, .
\]
\end{example}

\begin{example}[Partial fields]
A \emph{partial field} $P = (G,R)$ consists of a commutative ring $R$ with multiplicative identity $\1$ and a subgroup $G \subseteq R^\times$ such that $-\1 \in G$ and $G$ generates the ring $R$.
The tract corresponding to $P$ has $G$ as its abelian group, with null set
\[
N_P = \SetOf{\sum k_i a_i \in \NN[G]}{\sum k_i a_i = 0 \text{ in } R} \, .
\]

Key examples partial fields include
\begin{itemize}
\item The \emph{regular partial field} $\UU_0 := (\{\pm 1\}, \ZZ)$,
\item The \emph{dyadic partial field} $\DD := (\langle -1, 2\rangle, \ZZ[\frac{1}{2}])$.
\end{itemize}
\end{example}

Our key construction for creating `tropical-like' tracts is as follows.

\begin{definition}\label{def:tropical+extension}
    Let $F$ be a tract and $\Gamma$ an ordered abelian group.
    The \emph{tropical extension} $F[\Gamma]$ of $F$ by $\Gamma$ is the tract with multiplicative abelian group $F^\times \times \Gamma$, and null set
    \[
    N_{F[\Gamma]} = \left\{ \sum_{i\in I} (a_i, \gamma_i) \, \Big| \, \sum_{i \in I_{\min}} a_i \in N_F \text{ where } I_{\min} = \{i\in I \mid \gamma_i \text{ is minimal}\}\right\} \cup \{\infty\} \, ,
    \]
    where $\infty$ is label we give to the zero element of $F[\Gamma]$.
\end{definition}

\begin{example}\label{ex:trop-hyp-trop-ext}
    The tropical hyperfield can be realised as the tropical extension $\TT \cong \KK[\RR]$ where $(\RR, +)$ is viewed as an ordered abelian group.
    This isomorphism is given by identifying $\gamma \in \TT$ with $(\1, \gamma) \in \KK[\RR]$.
    We can extend this correspondence to the \emph{rank k tropical hyperfield} $\TT^{(k)} \cong \KK[\RR^{(k)}]$ where $(\RR^{(k)}, +)$ is $k$-tuples of reals ordered lexicographically.
    
    Given an arbitrary ordered abelian group $(\Gamma,+)$, the tract $\KK[\Gamma]$ can be similarly identified with a `tropical' hyperfield whose underlying set is $\Gamma \cup \{\infty\}$.
    Precisely, Hahn's embedding theorem states we can embed any ordered abelian group $\Gamma$ into $(\RR^k, +)$ ordered lexicographically for sufficiently large $k$.
    As such, we can view $\KK[\Gamma]$ as a subhyperfield of a sufficiently high rank tropical hyperfield.
\end{example}

\begin{example}\label{ex:sgn-trop-hyp-trop-ext}
The \emph{signed tropical hyperfield} is the set $\TT_\pm:= (\{\pm 1\} \times \RR) \cup \{\infty\}$, where $\{-1\} \times \RR$ is a `negative' copy of the tropical numbers.
Its hyperfield operations are
\begin{align*}
    (s_1,\gamma_1) \boxplus (s_2,\gamma_2) &=
    \begin{cases}
        (s_1,\gamma_1), & \text{if} \, \gamma_1 < \gamma_2,\\
        (s_2,\gamma_2), & \text{if} \, \gamma_2 < \gamma_1,\\
        (s_1,\gamma_1), & \text{if} \, s_1 = s_2, \, \text{and} \, \gamma_1 = \gamma_2,\\
        \{(\pm 1, \eta) \mid \eta \geq \gamma_1\} \cup \{\infty\}, & \text{if} \, s_1 = -s_2, \, \text{and} \, \gamma_1 = \gamma_2 ,
    \end{cases} \\
    (s_1,\gamma_1) \odot (s_2,\gamma_2) &= (s_1 \cdot s_2, \gamma_1 + \gamma_2) \, .
\end{align*}
    The signed tropical hyperfield can be viewed as the tropical extension $\TT_\pm \cong \SS[\RR]$.
    As such, its null set is
    \[
N_{\SS[\RR]} = \SetOf{\sum_{i\in I}(s_i, \gamma_i)}{\sum_{i \in I_{\min}} s_i = k\cdot 1 + \ell\cdot(-1) \, , \, k, \ell \geq 1} \cup \{\infty\} \, .
\]
\end{example}

Examples coming from extending hyperfields have already been studied in a number of guises.
However, there are new interesting examples we can view by moving to the category of tracts.

\begin{example}
The \emph{regular tropical tract} $\UU_0[\RR]$ has the same underlying abelian group as the signed tropical hyperfield, but a different null set.
Namely, it is the abelian group $\langle \pm 1 \rangle \times \RR$ with null set
\[
N_{\UU_0[\RR]} = \SetOf{\sum_{I \in I}(a_i, \gamma_i)}{\sum_{i \in I_{\min}} a_i = k\cdot 1 + k\cdot(-1) \, , \, k \geq 1} \cup \{\infty\} \, .
\]
\end{example}

\begin{definition}
    A homomorphism of tracts is a group homomorphism $f\colon F_1^\times \rightarrow F_2^\times$ such that the induced map $\tilde{f}$
    \begin{align*}
\tilde{f}\colon \NN[F_1^\times]&\rightarrow\NN[F_2^\times] \\
\sum k_ig_i &\mapsto \sum k_if(g_i)
    \end{align*}
    satisfies $\Tilde{f}(N_{F_1}) \subseteq N_{F_2}$.
    We extend $f$ to a map from $F_1$ to $F_2$ by setting $f(\0_{F_1}) = \0_{F_2}$.
\end{definition}

\begin{example}\label{ex:trivial+hom}
    Every tract $F$ has a trivial homomorphism $t \colon F \rightarrow \KK$ to the Krasner hyperfield, given by sending $g \to \1$ for all $g \in F^\times$.
\end{example}

\begin{example}\label{ex:maps}
There are a number of maps that will be extremely useful throughout when dealing with tropical extensions.
These are the `modulus' map $|\cdot|$ that forgets the $F$-part of an element, and the `phase' map $\theta$ that forgets the $\Gamma$-part of an element.
These are defined respectively as
\begin{align*}
|\cdot| \colon F[\Gamma] &\longrightarrow \KK[\Gamma] & \theta \colon F[\Gamma] &\longrightarrow F \\
(a, \gamma) &\longmapsto (\1, \gamma) & (a, \gamma) &\longmapsto a \\
\infty &\longmapsto \infty & \infty &\longmapsto \0 \, .
\end{align*}
It is straightforward to check that $|\cdot|$ is homomorphism of tracts.
However, $\theta$ is not a homomorphism: it commutes with multiplication, but may not preserve the null set, as $\sum_I (a_i, \gamma_i) \in N_{F[\Gamma]}$ does not imply $\sum_I a_i \in N_F$.
However, if restrict the null set to sums where $I_{\min} = I$, then this does commute: we will use this to our advantage later.
\end{example}

Finally, we note that our tracts will require an involution for certain constructions later.
An \emph{involution} of a tract is a homomorphism $\tau \colon F \rightarrow F$ such that $\tau^2(a) = a$ for all $a \in F$.
Natural examples of involutions are the identity and complex conjugation.
We will assume a tract $F$ comes with some fixed involution $\tau$ from now on, and write $\overline{X} = \tau(X)$.


\subsection{Matroids over tracts} \label{sec:matroids}

Let $F$ be a tract, $E$ a set and $1 \leq r \leq |E|$ a positive integer.
A \emph{(strong) Grassmann-Pl\"ucker function} of rank $r$ on ground set $E$ is a function $\phi:E^r \rightarrow F$ satisfying:
\begin{enumerate}[label=(GP\arabic*)]
\item \label{eq:GP1}$\phi$ is not the zero function,
\item \label{eq:GP2}$\phi$ is alternating, i.e. $\phi(i_1,\shortdots, i_a, \shortdots, i_b, \shortdots, i_r) = -\phi(i_1, \shortdots, i_b, \shortdots, i_a, \shortdots, i_r)$, and $\phi(i_1, \dots, i_r) = 0$ if $i_a = i_b$ for some $a \neq b$,
\item \label{eq:GP3}For any two ordered subsets $\{i_1, \dots, i_{r-1}\}$ and $\{j_1, \dots, j_{r+1}\}$ of $E$
\begin{equation} \label{eq:strong+GP}
\sum_{k=1}^{r+1} (-1)^k \phi(j_k, i_1, \dots, i_{r-1}) \cdot \phi(j_1, \dots,\hat{j}_k, \dots, j_{r+1}) \in N_F \, .
\end{equation}
\end{enumerate}
A \emph{weak Grassmann-Pl\"ucker function} satisfies \ref{eq:GP1}, \ref{eq:GP2} and the following weaker version of \ref{eq:GP3}:
\begin{enumerate}
\item[(GP3')] For any two subsets $I = \{i_1, \dots, i_{r-1}\}$ and $J = \{j_1, \dots, j_{r+1}\}$ with $|J \setminus I| = 3$, $\phi$ satisfies \eqref{eq:strong+GP}.
\end{enumerate}

As $\phi$ is alternating, we only need to record the value it takes on one ordering of a subset to deduce the values on all other orderings.
We define its associated \emph{Pl\"ucker vector} as the non-zero function $P_\phi \colon {E \choose r} \rightarrow F$ defined by
\[
P_\phi(I) = \phi(i_1, \dots, i_r) \, , \quad I = \{i_1 < i_2 < \dots < i_r\} \, .
\]
If $\phi$ is a strong Grassmann-Pl\"ucker function, then \ref{eq:GP3} states that $P_\phi$ must satisfy the \emph{Pl\"ucker relations} $\cP^r_{I;J}$ for all $I \in {E \choose r-1}, J \in {E \choose r+1}$,
\begin{equation} \label{eq:plucker+relation}
\cP^r_{I;J} \, \colon \, \sum_{j \in J} \sign(j;I,J) \cdot P(I+j)\cdot P(J-j) \in N_F
\end{equation}
where $\sign(j;I,J) = (-1)^\ell$ with $\ell$ equal to the number of elements $j' \in J$ with $j< j'$ plus the number of elements $i \in I$ with $j<i$.
In this case, we call the $P_\phi$ strong.

If $\phi$ is a weak Grassmann-Pl\"ucker function, then (GP3') states that $P_\phi$  must satisfy the \emph{three-term Pl\"ucker relations}, i.e. $\cP^r_{I;J}$ where $|J \setminus I| = 3$.
In this case we call $P_\phi$ weak.
As $P_\phi$ completely determines $\phi$, we will mostly drop Grassmann-Pl\"ucker functions and work entirely with Pl\"ucker vectors from now on.

\begin{definition}
We say two Pl\"ucker vectors $P_1, P_2$ are \emph{equivalent} if there exists some $\alpha \in F^\times$ such that $P_1 = \alpha \cdot P_2$.
A \emph{weak (resp. strong) $F$-matroid} $M$ of rank $r$ on ground set $E$ is an equivalence class of weak (resp. strong) Pl\"ucker vectors $M = [P_M \colon {E \choose r} \rightarrow F]$.
\end{definition}

%

\begin{example}
$\KK$-matroids are precisely ordinary matroids in the following way.
We can consider $P \colon {E \choose r} \rightarrow \KK$ as an indicator function of a set system $\cB \subseteq {E\choose r}$, where $B \in \cB$ if and only if $P(B) = \1$.
    In this case, the Pl\"ucker relations \eqref{eq:plucker+relation} precisely give the strong basis exchange axiom for matroids.
\end{example}

\begin{example}
$\SS$-matroids are precisely oriented matroids, as a Grassmann-Pl\"ucker function $\phi\colon E^r \rightarrow \SS$ is exactly a \emph{chirotope}.
Explicitly, it is a non-zero, alternating function that satisfies the exchange axiom derived from \eqref{eq:strong+GP}, see~\cite{Björner+LasVergnas+Sturmfels+White+Ziegler:1999}.
\end{example}

\begin{example}
$\TT$-matroids are precisely valuated matroids as defined in~\cite{Dress+Wenzel:1992-valuated}.
It is immediate to see that the Pl\"ucker relations precisely give the valuated basis exchange axiom.
\end{example}

\begin{example}
Matroids over partial fields are closely related to questions of representability of matroids, see~\cite{Semple+Whittle:1996,Pendavingh+vanZwam:2010}.
For example, $\UU_0$-matroids are precisely regular matroids, matroids representable over any field.
Similarly, $\DD$-matroids are precisely dyadic matroids, matroids representable over every field of characteristic different from two.
\end{example}

In all of these examples, we did not differentiate between weak and strong matroids.
It turns out that $\KK, \SS, \TT$ and partial fields are all examples of perfect tracts, tracts over which every weak matroid is also a strong matroid~\cite[Section 3.13]{Baker+Bowler:2019}.
As a non-example, the phase hyperfield $\Theta$ is not perfect, and so there exist weak $\Theta$-matroids that are not strong.
We shall characterise perfect tracts at the end of the section when we have a few more notions to work with.

Every $F$-matroid has an ordinary matroid associated to it in the following way.
Given some tuple $X \in F^E$, its \emph{support} is the set $\underline{X} := \{i \in E \, | \, X_i \neq \0\}$.
Given an $F$-matroid $M$ with Pl\"ucker vector $P$ viewed as a tuple in $F^{E \choose r}$, its \emph{underlying matroid} $\underline{M}$ is the matroid with whose bases are the support of $P$:
\[
\cB(\underline{M}) := \underline{P} = \left\{B \in {E \choose r} \, \Big| \, P(B) \neq \0 \right\} \, .
\]
We will see that many axioms systems and operations for $F$-matroids will reduce to the usual matroid axioms and operations on their underlying matroids.

As with matroids, $F$-matroids have multiple cryptomorphic axiom systems.
While we will not actively define these equivalent axiom systems, we will utilise their alternative formulations.

We begin with the \emph{$F$-circuits} of $M$.
For each $\tau = \{i_0, \dots, i_r\} \in {E \choose r+1}$ with $i_0 < \cdots < i_r$, we define $C_\tau \in F^E$ to be
\begin{equation} \label{eq:circuits}
(C_\tau)_i := \begin{cases}
(-1)^s P(\tau - i_s) & i = i_s \\
\0 & \text{otherwise}
\end{cases}
\end{equation}
The \emph{$F$-circuits of $M$} are scalings of the non-zero $C_\tau$, i.e.
\[
\cC(M) = \SetOf{\alpha \cdot C_\tau \in F^E}{\alpha \in F^\times \, , \, \tau \in {E \choose r+1}} \setminus \{\underline{\0}\} \, ,
\]
where $\underline{\0} = (\0, \dots, \0) \in F^E$.
The circuits of the underlying matroid $\underline{M}$ are the supports of the $F$-circuits of $M$, i.e.
\[
\cC(\underline{M}) = \SetOf{\underline{C} \subseteq E}{C \in \cC(M)} \, .
\]
One property we can immediately deduce from this for $F$-circuits is that they have incomparable supports: if $C, D \in \cC(M)$ with $\underline{C} \subseteq \underline{D}$ then $\underline{C} = \underline{D}$.
This is in fact one of the axioms for $F$-circuits; see~\cite{Baker+Bowler:2019} for a full description. 

\begin{remark}
If $\tau$ does not contain a basis of $\underline{M}$, then $C_\tau$ is just the zero vector, and so we do not count it as an $F$-circuit.
If $\tau$ does contain a basis $B \in \cB(\underline{M})$, then $\tau$ contains a unique circuit of $\underline{M}$.
This is the fundamental circuit corresponding to $B$ and $e = \tau \setminus B$.
\end{remark}

We next define a notion of orthogonality over $F$.
Let $X,Y \in F^E$, the \emph{inner product} of $X$ and $Y$ is
\[
\iprod{X}{Y} = \sum_{i\in E} X_i \cdot \overline{Y_i} \in \NN[F^\times] \, .
\]
We say that $X$ and $Y$ are \emph{orthogonal} if $\iprod{X}{Y} \in N_F$, in which case we write $X \perp Y$.
Given a set $\cX \subseteq F^E$, we define the set of tuples orthogonal to all elements of $\cX$ by
\[
\cX^\perp = \SetOf{Y \in F^E}{X \perp Y \text{ for all } X \in \cX} \, .
\]
We define the \emph{$F$-covectors} of $M$ to be the set of tuples $\cV^*(M) = \cC(M)^\perp$ in $F^E$ that are orthogonal to all circuits of $M$.

We now define the notion of duality for matroids.
This will allow us to define $F$-cocircuits and $F$-vectors of a matroid.

\begin{definition}
Let $M$ be a strong (resp. weak) $F$-matroid of rank $r$ with Pl\"ucker vector $P \colon {E \choose r} \rightarrow F$.
The \emph{dual $F$-matroid} $M^*$ is the strong (resp. weak) $F$-matroid of rank $|E|-r$ with dual Pl\"ucker vector
\[
P^* \colon {E \choose |E|-r} \rightarrow F \, , \quad P^*(I) = \sign(I,I^c)\cdot\overline{P(I^c)}
\]
where $I^c = E \setminus I$ and $\sign(I,I^c)$ is the sign of $(I, I^c)$ viewed as a permutation.
\end{definition}

This definition follows from the Grassmann-Pl\"ucker characterisation in the following theorem.

\begin{theorem}[\cite{Baker+Bowler:2019}] \label{thm:dual+matroid}
Let $M$ be a strong (resp. weak) $F$-matroid with dual matroid $M^*$.
\begin{itemize}
\item The $F$-circuits of $M^*$ are the elements
\[
\cC(M^*) = \MinSupp(\cC(M)^\perp) \, ,
\]
where $\MinSupp(\cX)$ denotes the non-zero elements of $\cX$ of minimal support,
\item The Grassmann-Pl\"ucker function $\phi^*$ for $M^*$ is defined by the formula
\[
\phi^*(x_1, \dots, x_{n-r}) = \sign(x_1, \dots, x_{n-r}, x_1', \dots, x_r')\overline{\phi(x_1', \dots, x_r')} \, ,
\]
where $x_1', \dots, x_r'$ is any ordering of $E \setminus \{x_1, \dots, x_{n-r}\}$.
\item The underlying matroid of $M^*$ is the dual of the underlying matroid of $M$, i.e. $\underline{M^*} = \underline{M}^*$,
\item $M^{**} = M$.
\end{itemize}
\end{theorem}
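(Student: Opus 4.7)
The plan is to first verify that $P^*$ as defined is a valid (strong or weak) Pl\"ucker vector over $F$, and then deduce the remaining four statements. The main obstacle is confirming that $P^*$ satisfies the Pl\"ucker relations. My approach is to pair each dual relation $\cP^{|E|-r}_{I;J}(P^*)$, for $I \in {E \choose |E|-r-1}$ and $J \in {E \choose |E|-r+1}$, with the primal relation $\cP^r_{J^c;I^c}(P)$, whose summation index runs over $I^c$. Since $I^c \setminus J^c = J \setminus I$, the three-term condition in the weak setting is preserved under this pairing. Expanding $P^*(K) = \sign(K,K^c)\,\overline{P(K^c)}$ on both factors in each summand of the dual relation and carefully bookkeeping the signs $\sign(j;I,J)$ and $\sign(K,K^c)$, one shows that the two sums coincide up to a common scalar in $F^\times$ and the termwise application of the involution $\tau$. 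Since $N_F$ is closed under $F^\times$-scaling by axiom (T4) and under tract homomorphisms (including $\tau$), membership of the primal sum in $N_F$ transfers to the dual sum. Axioms \ref{eq:GP1} and \ref{eq:GP2} for $P^*$ are immediate from the definition, using that $\sign(K,K^c)$ handles the alternating property.

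Once $M^*$ is known to be an $F$-matroid, the underlying matroid statement follows immediately from supports: $P^*(I) \neq \0$ if and only if $P(I^c) \neq \0$, so $\cB(\underline{M^*}) = \{I^c : I \in \cB(\underline{M})\} = \cB(\underline{M}^*)$. The involutive identity $M^{**} = M$ follows from the computation $P^{**}(I) = \sign(I,I^c)\sign(I^c,I)\,\tau^2(P(I)) = \pm P(I)$; since $M$ is defined as an equivalence class of Pl\"ucker vectors up to $F^\times$-scaling, the global sign is irrelevant. The reformulation of the Pl\"ucker vector formula as the stated expression for $\phi^*$ is just the translation from Pl\"ucker vectors to alternating Grassmann--Pl\"ucker functions, using that the sign of the permutation $(x_1,\dots,x_{n-r},x_1',\dots,x_r')$ equals $\sign(K,K^c)$ when $K = \{x_1,\dots,x_{n-r}\}$ is ordered increasingly.

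For the circuit characterization, I would argue both inclusions. For $\cC(M^*) \subseteq \MinSupp(\cC(M)^\perp)$, given $C_\tau \in \cC(M)$ and $C^*_\sigma \in \cC(M^*)$, the inner product $\iprod{C_\tau}{C^*_\sigma}$ expands via the formulas for circuits into a scalar multiple of a Pl\"ucker relation of $P$ indexed by a suitable $(I;J)$ determined by $\tau$ and $\sigma$, and hence lies in $N_F$. Minimality of support follows because the supports of the $C^*_\sigma$ are the circuits of $\underline{M}^* = \underline{M^*}$, which are minimal by classical matroid duality. For the reverse inclusion, any nonzero $Y \in \cC(M)^\perp$ of minimal support must have $\underline{Y}$ equal to a cocircuit of $\underline{M}$ (i.e.\ a circuit of $\underline{M^*}$), and then the cryptomorphic $F$-circuit axioms of~\cite{Baker+Bowler:2019}---in particular, that an $F$-circuit is determined up to $F^\times$-scaling by its support together with the orthogonality relations---pin down $Y$ as an $F^\times$-scalar multiple of the corresponding $C^*_\sigma$.
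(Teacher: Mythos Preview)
The paper does not prove this theorem at all: it is stated with the attribution \cite{Baker+Bowler:2019} and used as a black box, with no accompanying \texttt{proof} environment. So there is no ``paper's own proof'' to compare your attempt against.

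That said, your sketch is essentially the standard argument one finds in Baker--Bowler. The pairing of the dual relation $\cP^{|E|-r}_{I;J}(P^*)$ with the primal relation $\cP^r_{J^c;I^c}(P)$ via complementation is exactly the right mechanism, and your handling of the underlying matroid, the double dual, and the Grassmann--Pl\"ucker reformulation are all correct. One minor point: in your $M^{**}=M$ step, the product $\sign(I,I^c)\sign(I^c,I)$ equals $(-1)^{r(|E|-r)}$, which is a \emph{single} global sign independent of $I$, so the ``$\pm$'' is indeed absorbed by the equivalence class---but it is worth saying explicitly that this sign does not vary with $I$. For the circuit characterization, your two-inclusion strategy is the right one; the only place one has to be a little careful is the reverse inclusion, where ``an $F$-circuit is determined up to $F^\times$-scaling by its support together with orthogonality'' is itself a nontrivial fact (it is part of the cryptomorphism package in \cite{Baker+Bowler:2019}), so you are implicitly invoking the reference you are trying to re-prove. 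A self-contained argument would instead pick a basis $B$ of $\underline{M}$ with $|B \cap \underline{Y}| = 1$ and use the fundamental-circuit orthogonality relations to solve for the coordinates of $Y$ directly.
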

The \emph{$F$-cocircuits} of $M$ are $\cC^*(M) = \cC(M^*)$, the circuits of the dual matroid $M^*$.
The \emph{$F$-vectors} of $M$ are the set of tuples  $\cV(M) = \cC^*(M)^\perp = \cV^*(M^*)$ orthogonal to all cocircuits of $M$.

It follows from these definitions and theorems that circuits are the vectors of minimal support, and cocircuits are the covectors of minimal support.
As such, we always have
\[
\cV(M)^\perp \subseteq \cC(M)^\perp = \cV^*(M) \quad , \quad \cV^*(M)^\perp \subseteq \cC^*(M)^\perp = \cV(M) \, ,
\]
but these inclusions may be strict.
They become equalities when the tract $F$ is perfect.

\begin{definition}[\cite{Baker+Bowler:2019, Dress+Wenzel:1992-perfect}]
A tract $F$ is \emph{perfect} if $\cV(M)^\perp = \cV^*(M)$ for every $F$-matroid $M$.
\end{definition}

Recall that over perfect tracts, every weak $F$-matroid is also a strong $F$-matroid, and so we do not have to distinguish.
Restricting to perfect tracts shall be necessary for some of our results later.

We next note that $F$-matroids are preserved under tract homomorphisms, as shown in \cite[Lemma 3.39]{Baker+Bowler:2019}.
\begin{definition} \label{def:matroid+hom}
Let $M$ be a weak (resp. strong) $F_1$-matroid with Pl\"ucker vector $P\colon {E \choose r} \rightarrow F_1$ and circuits $\cC(M) \subseteq F_1^E$.
Let $f\colon F_1 \rightarrow F_2$ be a homomorphism of tracts.
The \emph{push-forward} of $M$ is the weak (resp. strong) $F_2$-matroid denoted $f_*(M)$ with Pl\"ucker vector and circuits:
\begin{align*}
f_*(P)&\colon{E \choose r} \rightarrow F_2 \quad , \quad I \mapsto f(P(I)) \, , \\
\cC(f_*(M)) &= \SetOf{\alpha \cdot f(C) \in F_2^E}{C \in \cC(M) \, , \, \alpha \in F_2^\times} \, .
\end{align*}
\end{definition}

\begin{example}
Recall that every tract has a trivial homomorphism $t\colon F \rightarrow \KK$ to the Krasner hyperfield.
As $\KK = \{\0, \1\}$, we can view tuples over $\KK$ as indicator vectors of set systems.
%
Given an $F$-matroid $M$ with Pl\"ucker vector $P$,
its push-forward in the trivial homomorphism $t_*(M)$ is a $\KK$-matroid whose Pl\"ucker vector $t_*(P)$ can be viewed as an indicator vector of $\underline{P} = \cB(\underline{M})$.
As such, we will often identify $t_*(M)$ with the underlying matroid $\underline{M}$ of $M$.
\end{example}

\begin{example}
Consider the tropical extension $F[\Gamma]$, and recall that there is a tract homomorphism $|\cdot| \colon F[\Gamma] \rightarrow \KK[\Gamma]$ that sends $(a, \gamma)$ to $(\1, \gamma)$.
Given an $F[\Gamma]$-matroid $M$, we denote $|M|$ as the $\KK[\Gamma]$-matroid obtained as the push-forward under this homomorphism.
In particular, if $\Gamma = \RR$ then $|M|$ is a valuated matroid.
\end{example}

We recall a notion of minors for an $F$-matroid $M$.
For $X \in F^E$ and $A \subseteq E$, we define
\[
X \setminus A \in F^{E \setminus A} \quad , \quad  (X \setminus A)_j = X_j \quad \forall j \notin A \, .
\]
Given a subset $\cX \subseteq F^E$, we define
\[
\cX \setminus A = \SetOf{X \setminus A \in F^{E\setminus A}}{X \in \cX \, , \, \underline{X} \cap A = \emptyset} \quad , \quad
\cX / A = \MinSupp\left(\SetOf{X \setminus A \in F^{E\setminus A}}{X \in \cX}\right) \, .
\]
The existence of the following definition follows from~\cite[Theorem 3.29]{Baker+Bowler:2019}.
\begin{definition}\label{def:minor}
Let $M$ be a strong (resp. weak) $F$-matroid on $E$ and let $A \subseteq E$.
\begin{itemize}
\item $\cC(M)/A$ is the set of circuits of a strong (resp. weak) $F$-matroid $M / A$ on $E\setminus A$, called the \emph{contraction} of $M$ with respect to $A$, whose underlying matroid is $\underline{M}/ A$.
\item $\cC(M) \setminus A$ is the set of circuits of a strong (resp. weak) $F$-matroid $M \setminus A$ on $E\setminus A$, called the \emph{deletion} of $M$ with respect to $A$, whose underlying matroid is $\underline{M}\setminus A$.
\end{itemize}
Moreover, $(M\setminus A)^* = M^*/A$ and $(M / A)^* = M^*\setminus A$.
\end{definition}
The Pl\"ucker vectors of $M/A$ and $M \setminus A$ are rather more convoluted to describe.
As such, we will only use the circuit definition, but refer the interested reader to~\cite[Lemma 4.4]{Baker+Bowler:2019} for further details.


We recall the notion of loops and coloops of an $F$-matroid as defined by \cite{Anderson:2019}.
\begin{definition}\label{defn:loop+coloop}
Let $M$ be an $F$-matroid on ground set $E$.
An element $e \in E$ of $M$ is a \emph{loop} (resp. \emph{coloop}) if it is a loop (resp. coloop) of the underlying matroid.
Equivalently,
\begin{itemize}
\item $e$ is a loop of $M$ if and only if $X_e = \0$ for every $X \in \cV^*(M)$,
\item $e$ is a coloop of $M$ if and only if $\cV^*(M) = \cV^*(M\setminus e) \times F$.
\end{itemize}
\end{definition}

We end by introducing the notion of extending an $F$-matroid by loops and coloops.
In the following, we write $\underline{\0}_A \in F^A$ for the all zeroes vector with coordinates in $A$, and $\chi_i \in F^E$ for the tuple $(\chi_i)_j = \delta_{ij}$ with $\0$ in every coordinate except $\1$ in the $i$-th coordinate.

\begin{proposition} \label{prop:loop+coloop}
Let $M$ be an $F$-matroid of rank $r$ on ground set $E$, and $A$ a set disjoint from $E$.
There exists a unique $F$-matroid $M \loopsum A$ of rank $r$ on ground set $E \sqcup A$ such that all elements of $A$ are loops, and $M = (M \loopsum A)\setminus A$.
Moreover, it has Pl\"ucker vector and circuits
\[
P_{M\loopsum A}(I) = \begin{cases}
P_M(I) & I \cap A = \emptyset \\ \0 & \text{otherwise}
\end{cases} \quad , \quad
\cC(M\loopsum A) = (\cC(M) \times \{\underline{\0}_A\}) \cup \SetOf{\alpha \cdot \chi_i \in F^{E \sqcup A}}{i \in A \, , \, \alpha \in F^\times} \, .
\]
There exists a unique $F$-matroid $M \coloopsum A$ of rank $r + |A|$ on ground set $E \sqcup A$ such that all elements of $A$ are coloops, and $M = (M \coloopsum A)\setminus A$.
Moreover, it has Pl\"ucker vector and circuits
\[
P_{M\coloopsum A}(I) = \begin{cases}
P_M(I\setminus A) & A \subseteq I \\ \0 & \text{otherwise}
\end{cases} \quad , \quad
\cC(M\coloopsum A) = (\cC(M) \times \{\underline{\0}_A\}) \, .
\]
Finally, we have $(M \loopsum A)^* = M^* \coloopsum A$.
\end{proposition}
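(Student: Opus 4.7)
The plan is to verify each assertion directly from the proposed Pl\"ucker vectors and circuit sets. I would first show that the proposed Pl\"ucker vectors satisfy the Grassmann-Pl\"ucker relations, then verify the loop/coloop conditions together with the deletion identities, deduce uniqueness via the circuit axioms, and finally establish the duality $(M \loopsum A)^* = M^* \coloopsum A$.

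For the loop extension, consider a Pl\"ucker relation $\cP^r_{I;J}$ with $I \in \binom{E \sqcup A}{r-1}$ and $J \in \binom{E \sqcup A}{r+1}$. If $(I \cup J) \cap A = \emptyset$, both the summand values and signs agree with those of $M$, so the relation reduces to the corresponding relation for $M$. Otherwise, pick $a \in (I \cup J) \cap A$: if $a \in I$ then every $I + j$ meets $A$ and so every term vanishes, and if $a \in J \setminus I$ then the $j = a$ term vanishes because $I + a$ meets $A$ while every other term vanishes because $a \in J - j$. For the coloop extension, the only relations with a potentially nonzero term are those with $A \subseteq I \cap J$, and these reduce to the Pl\"ucker relation for $M$ at $(I \setminus A, J \setminus A)$; the signs match because each $a \in A$ with $j < a$ contributes once to the count from $I$ and once to the count from $J$, giving an even power of $-1$.

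The loop/coloop conditions are immediate from the Pl\"ucker vector formulas, since no basis of $M \loopsum A$ meets $A$ and every basis of $M \coloopsum A$ contains $A$. The deletion identities $M = (M \loopsum A)\setminus A$ and $M = (M \coloopsum A)\setminus A$ follow from the circuit descriptions: in the loop case, the circuits $\alpha \chi_i$ with $i \in A$ are removed by $\cC(\cdot)\setminus A$, leaving only $\cC(M) \times \{\underline{\0}_A\}$, which restricts to $\cC(M)$; the coloop case is immediate as every circuit already has trivial $A$-part. For uniqueness, suppose $M'$ is an $F$-matroid on $E \sqcup A$ whose elements of $A$ are all loops and with $M' \setminus A = M$. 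Every circuit of its underlying matroid is then either a singleton $\{i\}$ with $i \in A$ or a circuit supported in $E$, and the latter class must coincide with $\cC(M) \times \{\underline{\0}_A\}$ in order for $\cC(M')\setminus A = \cC(M)$; thus $\cC(M')$ is forced to equal $\cC(M \loopsum A)$, so $M' = M \loopsum A$. The coloop case is analogous, since coloops appear in no circuit.

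Finally, to establish $(M \loopsum A)^* = M^* \coloopsum A$, I would compute $\cC(M \loopsum A)^\perp$ directly, bypassing the sign computations in the dual Pl\"ucker formula. Orthogonality against the loop circuits $\alpha \chi_i$ for all $\alpha \in F^\times$ forces $Y_i = \0$ for $i \in A$, since no nonzero singleton lies in $N_F$; orthogonality against $C \times \underline{\0}_A$ reduces to $Y|_E \in \cV^*(M)$. Hence $\cC(M \loopsum A)^\perp = \cV^*(M) \times \{\underline{\0}_A\}$, and taking minimum supports via Theorem~\ref{thm:dual+matroid} yields $\cC((M \loopsum A)^*) = \cC^*(M) \times \{\underline{\0}_A\} = \cC(M^* \coloopsum A)$, giving the duality. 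I expect the main bookkeeping burden to lie in the sign check for the coloop Pl\"ucker relations, but the even-parity observation for $A$-contributions sidesteps any real difficulty.
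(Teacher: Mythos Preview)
Your argument is essentially correct, but there is a small circularity in the ordering. You verify the Pl\"ucker relations for $P_{M\loopsum A}$ and $P_{M\coloopsum A}$, which establishes existence, and you then verify the deletion identities \emph{using the claimed circuit descriptions}. But you have not yet shown that the matroid defined by your Pl\"ucker vector actually has those circuits; you only derive the circuits later, in the uniqueness step, which in turn assumes the deletion identity. The fix is easy: either derive the circuits from the Pl\"ucker vector via the formula $C_\tau$ in \eqref{eq:circuits} before invoking them (this is what the paper does), or verify the deletion identities directly at the Pl\"ucker level (deleting loops restricts $P$ to subsets disjoint from $A$; deleting coloops restricts to $I \mapsto P(I \cup A)$), which is immediate from your formulas.

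Modulo this ordering issue, your route and the paper's differ mainly in emphasis. The paper argues uniqueness first: it takes an arbitrary extension $N$ with the loop/coloop and deletion properties, pins down $\cC(N)$ from those hypotheses, and only then checks that the stated Pl\"ucker vector produces exactly those circuits via \eqref{eq:circuits}. You instead verify the Grassmann--Pl\"ucker relations head-on for existence (including the even-parity sign observation for the coloop case, which the paper sidesteps entirely by working through circuits), and then run the uniqueness argument. Both approaches are of comparable length; yours is a bit more self-contained on the Pl\"ucker side, while the paper's is more uniformly circuit-based. For the duality statement your computation of $\cC(M\loopsum A)^\perp$ and the paper's direct cocircuit argument are essentially the same.
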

\begin{proof}
Let $N$ be an $F$-matroid on $E \sqcup A$ such that all elements of $A$ are loops and $N \setminus A = M$.
The latter coupled with Definition~\ref{def:minor} implies that $\cC(M) \times \{\underline{\0}_A\}$ is a subset of the circuits of $N$.
As each $a \in A$ is a loop, $X_a = \0$ for all covectors $X \in \cV^*(N)$ and so $\chi_a$ and its scalings are contained in $\cV^*(N)^\perp$.
Moreover, they are support minimal and so contained in $\cC(N)$.
We show this gives the full list of circuits in $\cC(N)$.
For any circuit $C$, if $a \in \underline{C} \cap A$, then $\underline{\chi_a} \subseteq \underline{C}$ and hence $C = \alpha \cdot \chi_a$ for some scalar $\alpha \in F^\times$.
If $\underline{C} \cap A = \emptyset$, then $C \setminus A \in \cC(M)$ by Definition~\ref{def:minor}.
As such, $\cC(N) = \cC(M\loopsum A)$, and so $M\loopsum A$ is the unique $F$-matroid with this property.

Now let $N$ be an $F$-matroid on ground set $E \sqcup A$ such that all elements of $A$ are coloops and $N \setminus A = M$.
Again, we immediately have that $\cC(M) \times \{\underline{\0}_A\}$ is a subset of the circuits of $N$.
Suppose $C \in \cC(N)$ with $a \in \underline{C} \cap A$. 
For any tuple $Y \in F^{E\sqcup A}$, define $\widetilde{Y}$ by
\[
\widetilde{Y}_i = \begin{cases} Y_i & i \neq a \\ \0 & i = a \end{cases} \, .
\]
As $a$ is a coloop, by Definition~\ref{defn:loop+coloop} for any covector $X \in \cV^*(N)$, we have $\widetilde{X}\in \cV^*(N)$ a covector also.
This implies that $\widetilde{C} \in F^{E \sqcup A}$ satisfies
\[
\langle\widetilde{C}, X \rangle = \sum_{i \in E \sqcup A} \widetilde{C}_i \cdot \overline{X}_i = \sum_{i \in E \sqcup A - a} C_i \cdot \overline{X}_i = \langle C, \widetilde{X} \rangle \in N_F \, .
\]
for all $X \in \cV^*(M)$.
Moreover, $\underline{\widetilde{C}} \subsetneq \underline{C}$, contradicting that $C$ was a circuit.
This gives that $\cC(N) = \cC(M\coloopsum A)$.

We show that $P_{M\loopsum A}$ gives rise to the circuits $\cC(M\loopsum A)$ using~\eqref{eq:circuits}.
First note that $\rank(M\loopsum A) = \rank(M)$, and so the circuits are scalings of $C_\tau$ for $\tau \in {E \choose r+1}$.
When $\tau \cap A = \emptyset$, we immediately get $C_\tau \in \cC(M) \times \{\underline{\0}_A\}$.
When $\tau \cap A = a$, we get that $(C_\tau)_a = P(\tau - a)$ and $(C_\tau)_i = \0$ for any $i \in \tau - a$.
As such, $C_\tau$ is a scaling of $\chi_a$, or the zero vector if $\tau - a$ is not a basis.
When $|\tau \cap A| \geq 2$, we have $(\tau - a) \cap A \neq \emptyset$ for all $a \in \tau$, hence $C_\tau$ is the zero vector.
These three cases cover $\cC(M\loopsum A)$, and so this is the Pl\"ucker vector of $M \loopsum A$.

We now show that $P_{M\coloopsum A}$ gives rise to the circuits $\cC(M\coloopsum A)$ using~\eqref{eq:circuits}.
First note that $\rank(M\coloopsum A) = \rank(M) + |A|$, and so the circuits are scalings of $C_\tau$ for $\tau \in {E \choose r+|A|+1}$.
When $A \subseteq \tau$, we have that $C_\tau$ is equal to $C_{\tau\setminus A} \times \{\underline{\0}_A\}$ where $C_{\tau\setminus A} \in \cC(M)$.
When $A \nsubseteq \tau$, we have that $C_\tau$ is the zero vector.
These cases cover $\cC(M\coloopsum A)$, and so this is the Pl\"ucker vector of $M \coloopsum A$.

Finally, we show that $(M \loopsum A)^* = M^* \coloopsum A$.
We do this by showing the cocircuits $\cC^*(M \loopsum A)$ are exactly $\cC^*(M) \times \{\underline{\0}_A\}$.
It is straightforward to show that any element of $\cC^*(M) \times \{\underline{\0}_A\}$ is orthogonal to $C \times \{\underline{\0}_A\}$ for all $C \in \cC(M)$, and to all $\chi_a$ for $a \in A$.
Conversely, any element $D \in \cC^*(M \loopsum A)$ must have $D_a = \0$ for all $a \in A$ to be orthogonal to $\chi_a$.
Definition~\ref{def:minor} tells us that the deletion $D \setminus A \in \cC^*(M)$ is a cocircuit of $M$.
\end{proof}

\section{Matroids over tropical extensions of tracts} \label{sec:trop+ext+matroids}
In this section, we will focus on matroids over tropical extensions of tracts.
Throughout, we will consider the tract $F[\Gamma]$, the tropical extension of the tract $F$ by the ordered abelian group $\Gamma$, with zero element $\infty$ and identity $0$.
We will regularly utilise the natural maps between these tracts introduced in Example~\ref{ex:maps},
\begin{align*}
|\cdot| \colon F[\Gamma] &\longrightarrow \KK[\Gamma] & \theta \colon F[\Gamma] &\longrightarrow F \\
(a, \gamma) &\longmapsto (\1, \gamma) & (a, \gamma) &\longmapsto a \\
\infty &\longmapsto \infty & \infty &\longmapsto \0 \, .
\end{align*}
the former of which is a homomorphism.
We can also view $|\cdot|$ as a map to $\Gamma \cup \{\infty\}$ by identifying $(\1,\gamma) \in \KK[\Gamma]$ with $\gamma \in \Gamma$.
In particular, we will frequently use the ordered additive group structure on $\Gamma$.

We introduce some notation for tuples in $(\Gamma\cup \{\infty\})^E$.
Given some $\bu \in (\Gamma\cup \{\infty\})^E$, we define
\[
Z_\bu = \SetOf{i \in E}{u_i = \infty} \quad , \quad \bu^\circ = \bu \setminus Z_\bu \in \Gamma^{E\setminus Z_\bu} \, .
\]
Note that $Z_\bu$ is the set of (tropical) zeroes of $\bu$, and that we can always write $\bu$ uniquely as $\bu = \bu^\circ \times \{\infty\}^{Z_\bu}$.

\subsection{Initial matroids} \label{sec:initial+matroid}
The key notion in this section will be the concept of an \emph{initial matroid}, also know as a \emph{residue matroid}~\cite{Bowler+Pendavingh:2019}.
We first define the special case when $\bu \in \Gamma^E$ in Definition~\ref{def:toric+initial+tuple}, then in full generality in Definition~\ref{def:initial+tuple}.

\begin{definition}[Toric initial matroid]\label{def:toric+initial+tuple}
Let $M$ be an $F[\Gamma]$-matroid of rank $r$ with Pl\"ucker vector $P\colon {E \choose r} \rightarrow F[\Gamma]$.
Given some $\bu \in \Gamma^E$, we define its \emph{toric initial matroid} $M^\bu$ to be the $F$-matroid with Pl\"ucker vector $P^\bu \colon {E \choose r} \rightarrow F$ defined as
\[
P^\bu(I) = \begin{cases}
\theta(P(I)) & |P(I)| - \sum\limits_{i \in I} u_i \leq |P(J)| - \sum\limits_{j \in J} u_j \text{ for all } J \in {E \choose r} \, , \\
\0 & \text{otherwise} \, .
\end{cases}
\]
\end{definition}
We use the term toric initial matroid, as $\bu \in \Gamma^E$ is a point of the tropical torus i.e. $\bu$ has no coordinates equal to infinity, the tropical zero element.

We will need a definition of initial matroid for $\bu \in (\Gamma \cup \{\infty\})^E$ where coordinates may be $\infty$.
However, we cannot immediately extend Definition~\ref{def:toric+initial+tuple} as the inequality given may include $\infty + (-\infty)$ terms.
As such, we must make a slightly more technical definition by contracting away coordinates with $u_i = \infty$ then readding them as coloops.

\begin{definition}[Initial matroid]\label{def:initial+tuple}
Let $M$ be an $F[\Gamma]$-matroid of rank $r$.
Given some $\bu \in (\Gamma \cup \{\infty\})^E$, we define the \emph{initial matroid} $M^\bu$ to be
\begin{equation} \label{eq:initial+nontoric}
M^\bu = (M/Z_\bu)^{\bu^\circ} \coloopsum Z_\bu \, .
\end{equation}
Its underlying matroid $\underline{M^\bu}$ has rank $r' = r - \rank_M(Z_\bu)+ |Z_\bu|$.
\end{definition}
One can derive the Pl\"ucker vector for non-toric initial matroids using Definition~\ref{def:toric+initial+tuple}, Proposition~\ref{prop:loop+coloop} and the Pl\"ucker vector formulation of contractions in~\cite[Lemma 4.4]{Baker+Bowler:2019}.
However, the resulting function is too clunky to use and not particularly enlightening.
Moreover, non-toric initial matroids will have a much cleaner description in terms of circuits, which we shall derive in Section~\ref{sec:initial+circuits}.
Given this, we will mostly work with the toric case and extend to non-toric initial matroids via~\eqref{eq:initial+nontoric}.

Despite referring to them as initial matroids, it is a priori not clear that $P^\bu$ is the Pl\"ucker vector of an $F$-matroid.
The first result is that this is indeed the case.

\begin{proposition}\label{prop:ext+implies+initial}
    Let $M$ be a weak (resp. strong) $F[\Gamma]$-matroid with Pl\"ucker vector $P \colon {E \choose r} \rightarrow F[\Gamma]$.
    Then $P^\bu \colon {E \choose r} \rightarrow F$ is the Pl\"ucker vector of a weak (resp. strong) $F$-matroid $M^\bu$ for all $\bu \in (\Gamma \cup \{\infty\})^E$.
\end{proposition}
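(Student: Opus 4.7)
The plan is to prove the toric case $\bu \in \Gamma^E$ directly by unpacking the definition of $N_{F[\Gamma]}$, and then reduce the general case via the coloop extension of Proposition~\ref{prop:loop+coloop}.

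For the toric case, properties (GP1) and (GP2) for $P^\bu$ come essentially for free. Alternation (GP2) is inherited from $P$, since $\theta$ respects multiplication on $F[\Gamma]^\times$ and the "minimum-attaining" condition in Definition~\ref{def:toric+initial+tuple} depends only on the underlying $r$-subset. Non-vanishing (GP1) follows from finiteness of ${E \choose r}$: since $P$ is not identically $\infty$, the rescaled valuation $v(K) := |P(K)| - \sum_{k\in K} u_k$ attains its minimum at some basis $K^\star$ of $\underline{M}$, and then $P^\bu(K^\star) = \theta(P(K^\star)) \neq \0$.

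The heart of the argument is (GP3). Fix $I \in {E \choose r-1}$ and $J \in {E \choose r+1}$ and consider the Pl\"ucker relation $\cP^r_{I;J}$ evaluated on $P$. The key observation is the telescoping identity
\[
\sum_{k \in I+j} u_k + \sum_{k \in J-j} u_k \;=\; \sum_{k \in I} u_k + \sum_{k \in J} u_k \, ,
\]
which is independent of $j \in J \setminus I$. Consequently, writing $m_r := \min_{K \in {E \choose r}} v(K)$, each term $T_j = \sign(j;I,J)\cdot P(I+j)\cdot P(J-j)$ has $\Gamma$-value at least $2 m_r + \sum_{I} u_k + \sum_{J} u_k$, with equality precisely when $v(I+j) = v(J-j) = m_r$, i.e.\ precisely when $P^\bu(I+j)\cdot P^\bu(J-j) \neq \0$. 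Moreover the $F$-part of $T_j$ in this situation is exactly $\sign(j;I,J) \cdot P^\bu(I+j)\cdot P^\bu(J-j)$. Since $\cP^r_{I;J}(P) \in N_{F[\Gamma]}$, unpacking Definition~\ref{def:tropical+extension} gives either that every $T_j$ is $\infty$ (in which case $\cP^r_{I;J}(P^\bu) = \0 \in N_F$ trivially) or that the sum of $F$-parts over the minimum-$\Gamma$-value terms lies in $N_F$; by the previous sentence this sum is exactly $\cP^r_{I;J}(P^\bu)$. Either way the Pl\"ucker relation $\cP^r_{I;J}$ for $P^\bu$ over $F$ holds, and since the argument is local to each $I,J$ it proves the strong version when $P$ is strong and (restricted to $|J\setminus I|=3$) the weak version when $P$ is weak.

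For general $\bu \in (\Gamma \cup \{\infty\})^E$, the formula $M^\bu = (M/Z_\bu)^{\bu^\circ} \coloopsum Z_\bu$ factors the construction through three operations, each preserving weak/strong $F$-matroid structure: contraction by $Z_\bu$ (Definition~\ref{def:minor}), the toric initial operation with $\bu^\circ \in \Gamma^{E\setminus Z_\bu}$ (the case just established), and coloop extension along $Z_\bu$ (Proposition~\ref{prop:loop+coloop}). The main obstacle in the whole argument is largely bookkeeping: carefully matching the "minimum-$\Gamma$-part" terms selected in Definition~\ref{def:tropical+extension} with those selected in Definition~\ref{def:toric+initial+tuple}, for which the telescoping identity above is what makes the match clean.
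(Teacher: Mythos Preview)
Your approach is essentially the paper's: the telescoping identity you isolate is exactly Lemma~\ref{lem:incidence+pairs} specialized to $P=Q$, and the non-toric reduction via $M^\bu = (M/Z_\bu)^{\bu^\circ}\coloopsum Z_\bu$ is verbatim the paper's argument. The only structural difference is that the paper packages the toric step as the more general Lemma~\ref{lem:ext+implies+initial} for Pl\"ucker \emph{incidence} relations $\cP^{r,s}_{I;J}$ with $r\leq s$, which it later reuses for flag matroids; for the proposition itself the content is identical.

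One small slip in your write-up: the assertion ``by the previous sentence this sum is exactly $\cP^r_{I;J}(P^\bu)$'' tacitly assumes the minimum $\Gamma$-value among the $T_j$ actually equals $2m_r+\sum_{k\in I} u_k+\sum_{k\in J} u_k$. This need not happen: if no $j$ has \emph{both} $v(I+j)=m_r$ and $v(J-j)=m_r$, the minimum of $|T_j|$ is strictly larger, and the minimum-attaining terms of $\sum_j T_j$ do \emph{not} correspond to the nonzero terms of $\cP^r_{I;J}(P^\bu)$. In that situation every term $P^\bu(I+j)\cdot P^\bu(J-j)$ vanishes and $\cP^r_{I;J}(P^\bu)=\0\in N_F$ trivially, so the conclusion survives --- but the two sums you are equating are genuinely different there. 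The paper's Lemma~\ref{lem:ext+implies+initial} handles this by an explicit case split (``If there exists no $j$ such that $I+j\in\underline{P^\bu}$ and $J-j\in\underline{Q^\bu}$, then every term \dots\ is zero''); you should do the same.
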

We shall defer the proof of Proposition~\ref{prop:ext+implies+initial} until Section~\ref{sec:flags}, where we will prove a more general statement for initial flag matroids.

\begin{example}
    The prototypical example of initial matroids is when $F = \KK$ and $\Gamma = \RR$, hence the tropical extension is the tropical hyperfield $\TT = F[\Gamma]$.
    $\TT$-matroids are valuated matroids, and so its initial matroids are ordinary matroids by Proposition~\ref{prop:ext+implies+initial}.
As discussed in the Introduction, this is an if and only if statement for valuated matroids: the rest of this section shall be dedicated to extending this to arbitrary tropical extensions. 
\end{example}

\begin{remark}
Toric initial matroids of $\TT$-matroids have an additional polyhedral interpretation rather than just algebraic, which we briefly recall; see~\cite{Speyer:2008,Maclagan+Sturmfels:2015} for full details.
Given a $\TT$-matroid $M$ with Pl\"ucker vector $P$, we consider the polyhedron
\[
\mathcal{E}(M) = \conv\left\{(e_B,P(B)) \mid B \in \cB(\underline{M})\right\} \subseteq \RR^E \times \RR \, , \quad e_B  = \sum_{i \in B} e_i \in \RR^E \, ,
\] 
and take the projection of its lower faces into $\RR^E$.
The resulting polyhedral complex is a regular subdivision $\cQ_M$ of the matroid base polytope $Q_{\underline{M}} := \conv(e_B \mid B \in \cB(\underline{M}))$.
The faces of $\cQ_M$ are precisely the matroid polytopes $Q_{M^{\bu}}$ of the toric initial matroids $M^{\bu}$, as $(-\bu,1)$ is the linear functional on $\RR^E \times \RR$ that is minimized on $Q_{M^{\bu}}$.
This construction can be repeated for any function $P \colon {E \choose r} \rightarrow \RR \cup \{\infty\}$, but only gives rise to a subdivision into matroid polytopes if $P$ is the Pl\"ucker vector of a $\TT$-matroid.

When $\Gamma = \RR$, the toric initial matroids of $F[\Gamma]$-matroids can be viewed as matroid polytopes with the $F$-data $\theta(P(B))$ attached at the vertex $e_B$.
When $\Gamma \neq \RR$, this polyhedral approach to initial matroids breaks down and so we must restrict to the algebraic approach.
\end{remark}


The first main theorem of this section is that the converse to Proposition~\ref{prop:ext+implies+initial} also holds for weak matroids.
\begin{theorem} \label{thm:matroid+subdivision}
Let $F$ be a tract and $\Gamma$ an ordered abelian group.
$M$ is a weak $F[\Gamma]$-matroid if and only if $M^\bu$ is a weak $F$-matroid for all $\bu \in \Gamma^E$.
\end{theorem}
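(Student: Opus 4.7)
The forward direction is Proposition~\ref{prop:ext+implies+initial}, so the content is the converse.
Fix a Plücker vector $P \colon {E \choose r} \to F[\Gamma]$ whose initial $F$-matroids $M^{\bu}$ are all weak $F$-matroids.
It suffices to verify the three-term Plücker relation $\cP^r_{I;J}$ for $P$ at each pair $I, J$ with $|J \setminus I| = 3$.
Writing $J \setminus I = \{a,b,c\}$, $I \setminus J = \{d\}$, and setting $p_j = P(I+j)$, $q_j = P(J-j)$, $\gamma_j = |p_j|+|q_j|$, $\gamma_{\min} = \min_j \gamma_j$, and $S = \{j : \gamma_j = \gamma_{\min}\}$, the definition of $N_{F[\Gamma]}$ reduces $\cP^r_{I;J}$ over $F[\Gamma]$ to the identity
\[
\sum_{j \in S} \sign(j;I,J)\, \theta(p_j)\theta(q_j) \in N_F,
\]
which is trivial when $\gamma_{\min} = \infty$.
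The plan is to produce, for each such $(I,J)$, a weight $\bu \in \Gamma^E$ so that the hypothesised three-term Plücker relation of $M^{\bu}$ at the \emph{same} indices $(I,J)$ collapses to exactly this identity.

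In the generic case where all six values $p_j, q_j$ lie in $F[\Gamma]^\times$, set $Q = |q_a|+|q_b|+|q_c|$ and $t = Q - \gamma_{\min}$, and take
\[
u_i = M \;\; (i \in I\cap J), \quad u_d = 0, \quad u_j = -|q_j|+t \;\; (j \in \{a,b,c\}), \quad u_i = -T \;\; (i \notin I\cup J),
\]
with $M, T \in \Gamma$ sufficiently large.
A direct calculation on the six bases of the form $I+j$ or $J-j$ yields
\[
f(I+j) := |P(I+j)| - \sum_{i \in I+j} u_i = \gamma_j - (r-2)M - t, \qquad f(J-j) = Q - (r-2)M - 2t,
\]
so $f(I+j) = f(J-j) = 2\gamma_{\min} - Q - (r-2)M =: m^*$ precisely when $j \in S$, while $f(J-j) = m^*$ for every $j$ and $f(I+j) > m^*$ strictly for $j \notin S$.
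Taking $M$ and $T$ large drives every other basis strictly above $m^*$: a basis missing an element of $I\cap J$ incurs a penalty of order $M$, and one meeting $E \setminus (I\cup J)$ a penalty of order $T$.
Consequently $P^{\bu}(I+j) \neq \0$ iff $j \in S$, and $P^{\bu}(J-j) \neq \0$ for all $j$, so the three-term Plücker relation of $M^{\bu}$ at $(I,J)$, which holds by hypothesis, collapses to exactly the required identity in $N_F$.

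The principal delicacy is the calibration of $\bu$: $m^*$ must be attained at both $I+j$ and $J-j$ exactly for $j \in S$ (dictating the affine relation $t = Q - \gamma_{\min}$) while every other basis must stay strictly above $m^*$ (forcing $M, T$ large).
The remaining cases are those in which some $p_j$ or $q_j$ equals the zero element $\infty$ of $F[\Gamma]$; such a $j$ satisfies $\gamma_j = \infty \notin S$, and its term contributes only $\infty \in N_{F[\Gamma]}$ to $\cP^r_{I;J}$ for $M$.
In those cases, the formula $u_j = -|q_j|+t$ is undefined, but replacing it by a sufficiently negative value suffices to push $I+j$ out of $\cB(\underline{M^{\bu}})$ (the companion $J-j$ is already excluded by $|q_j| = \infty$), while the defining relations for $j \in S$ are readjusted to preserve $f(I+j) = f(J-j) = m^*$.
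Note that only integer-coefficient combinations of existing elements of $\Gamma$ appear throughout, so no divisibility hypothesis on $\Gamma$ is required.
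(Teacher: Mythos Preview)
Your argument is correct and follows essentially the same strategy as the paper: for each three-term relation $\cP^r_{I;J}$, build a weight $\bu$ that is large on $I\cap J$, small outside $I\cup J$, and calibrated on the remaining four coordinates so that the relation for $P^{\bu}$ at the same $(I,J)$ collapses to exactly the required identity in $N_F$. The paper argues via the contrapositive and uses a different calibration (setting $u_{j_k}=\gamma_{I+j_k}$ with $\gamma_B=|P(B)|$ or $0$ when infinite, rather than your $u_j=-|q_j|+t$), but this is cosmetic; your sketch for the degenerate case is at roughly the same level of detail as the paper's, and the cleanest way to make it uniform is simply to replace any $|q_j|=\infty$ by a large finite placeholder $\Omega'\in\Gamma$ in your formulas for $u_j$ and $Q$, after which your generic computation goes through verbatim.
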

\begin{proof}
Let $P \colon {E \choose r} \rightarrow F[\Gamma]$ be an arbitrary function.
Given Proposition~\ref{prop:ext+implies+initial}, it suffices to show that if $P$ does not satisfy some three term Pl\"ucker relation, then there exists some $\bu \in \Gamma^E$ such that $P^\bu$ also does not satisfy that relation.

Suppose that $P$ does not satisfy $\cP_{I, J}$ where $J \setminus I = \{j_0j_1j_2\}$:
\begin{align*} \label{eq:bad+relation}
\cP_{I, J} \, : \, \sum_{k=0}^2 (-1)^k\cdot P(I+j_k)\cdot P(J - j_k) \notin N_{F[\Gamma]} \quad 
\Rightarrow \quad \sum_{j_k \in K} (-1)^k \cdot \theta(P(I+j_k))\cdot \theta(P(J - j_k)) \notin N_F \, \\
\text{ where }  K = \left\{j_k \in J\setminus I \, \colon \, \infty \neq |P(I + j_k)| + |P(J - j_k)| \leq |P(I + j)| + |P(J - j)| \, \forall j \in J \setminus I \right\}.
\end{align*}
We note that $K \neq \emptyset$, else $P$ satisfies $\cP_{I,J}$.
Without loss of generality, let $j_0 \in K$, i.e. $|P(I + j_0)| + |P(J - j_0)| \neq \infty$ is minimal.
We aim to find a vector $\bu \in \Gamma^E$ such that $P^\bu$ also does not satisfy $\cP_{I,J}$ either.

Define $\gamma \in \Gamma^{E \choose r}$ as
\[
\gamma_{B} = \begin{cases} |P(B)| & |P(B)| \neq \infty \\ 0 & |P(B)| = \infty \end{cases} \, , \quad \forall B \in {E \choose r} \, ,
\]
and fix some arbitrarily large element $\Omega \in \Gamma$ such that $\Omega \gg \gamma_B$ for all $B$.
Let $i$ be the unique element of $I \setminus J$, and define $\bu \in \Gamma^E$ as
\[
u_s = \begin{cases}
    \gamma_{I+j_1} + \gamma_{I +j_2} - \gamma_{J - j_0} & s = i \\
    \gamma_{I + s} & s \in \{j_0,j_1,j_2\} \\
    \Omega & s \in I \cap J \\
    -\Omega & s \notin I \cup J
\end{cases} \, .
\]
We write $\psi(B) = |P(B)| - \sum_{k \in B}{u_k}$ and recall that $B \in \underline{P^\bu} := \{B' \in {E \choose r} \mid P^\bu(B') \neq \0\}$ if and only if it is minimized by $\psi$.
Providing that $\Omega$ is sufficiently large, if $B \in \underline{P^\bu}$ then $B = (I \cap J) \cup \{a,b\}$ where $a,b \in \{i,j_0,j_1,j_2\}$, as $\psi$ will be minimized by selecting all elements of $I \cap J$ and some subset of elements from $\{i,j_0,j_1,j_2\}$.
Let $W = \gamma_{J-j_0} - \gamma_{I+j_1} - \gamma_{I+j_2} - (r-2)\Omega$.
It is a straightforward check that:
\begin{align*}
\psi(I + j_0) &= \psi(J - j_0) = W \quad  \, , \\
\psi(I + j_k) &= \begin{cases} W & \text{ if } |P(I+j_k)| \neq \infty \\ \infty & \text{ if } |P(I+j_k)| = \infty \end{cases} \, , \quad k = 1,2 \\
\psi(J - j_1) &= |P(J-j_1)| - \gamma_{I+j_0} - \gamma_{I+j_2} - (r-2)\Omega \geq W \, . \\
\psi(J - j_2) &= |P(J-j_2)| - \gamma_{I+j_0} - \gamma_{I+j_1} - (r-2)\Omega \geq W \, , \\
\end{align*}
This implies that $\{I+j_0,I+j_1, I+j_2, J-j_0\} \cap \underline{P} \subset \underline{P^\bu}$, and that 
\begin{align*}
J-j_1 \in \underline{P^\bu} \, \Leftrightarrow \, |P(I+j_0)|+ |P(J-j_0)| = |P(I+j_1)|+ |P(J-j_1)| \\
J - j_2 \in \underline{P^\bu} \, \Leftrightarrow \, |P(I+j_0)|+ |P(J-j_0)| = |P(I+j_2)|+ |P(J - j_2)| 
\end{align*}
In particular, we can deduce that
\begin{align*}
\sum_{k=0}^2 (-1)^k \cdot P^\bu(I+j_k)\cdot P^\bu(J - j_k)
= \sum_{j_k \in K} (-1)^k \cdot \theta(P(I+j_k))\cdot \theta(P(J - j_k)) \notin N_F \, .
\end{align*}
\end{proof}

\begin{remark}\label{rem:toric+subdivision}
Although $M^\bu$ is an $F$-matroid for all $\bu \in (\Gamma \cup \{\infty\})^E$, Theorem~\ref{thm:matroid+subdivision} shows it is actually sufficient to check just the toric initial $F$-matroids to determine if $M$ is an $F[\Gamma]$-matroid.
This won't be the case when we consider flag matroids in Section~\ref{sec:flags}, and so it is necessary to introduce non-toric initial matroids.
\end{remark}

The natural next question is whether an analogue for Theorem~\ref{thm:matroid+subdivision} holds for strong matroids.
The answer depends on whether we are working over a perfect tract.
To this end, we first note that $F[\Gamma]$ is perfect if and only if $F$ is perfect.

\begin{theorem}[\cite{Bowler+Pendavingh:2019}]\label{thm:perfect+tract}
    Let $F$ be a tract and $\Gamma$ an ordered abelian group.
    Then $F$ is a perfect tract if and only if $F[\Gamma]$ is a perfect tract.
\end{theorem}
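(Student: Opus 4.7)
Since this theorem is attributed to~\cite{Bowler+Pendavingh:2019}, the plan is to sketch both directions using the initial-matroid machinery just developed. The two directions are asymmetric in difficulty, with the reverse implication being the substantive one.

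For the direction $F[\Gamma]$ perfect $\Rightarrow$ $F$ perfect, I would exploit the inclusion $\iota\colon F \hookrightarrow F[\Gamma]$ defined by $a \mapsto (a, 0)$ on $F^\times$ and $\0 \mapsto \infty$. This is a tract homomorphism, so push-forward sends a weak $F$-matroid $M$ to a weak $F[\Gamma]$-matroid $\iota_*(M)$. The key observation is that every formal sum appearing in a Pl\"ucker relation of $\iota_*(M)$ has constant $\Gamma$-coordinate $0$; thus the index set $I_{\min}$ appearing in $N_{F[\Gamma]}$ is always the full index set, and the null-set condition for $F[\Gamma]$ reduces to the null-set condition for $F$ on the $F$-part. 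Consequently the strong Pl\"ucker relations for $M$ over $F$ are equivalent to those for $\iota_*(M)$ over $F[\Gamma]$. If $F[\Gamma]$ is perfect, then $\iota_*(M)$ being weak forces it to be strong, which in turn forces $M$ to be strong, proving $F$ is perfect.

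For the direction $F$ perfect $\Rightarrow$ $F[\Gamma]$ perfect, the plan is to upgrade the proof of Theorem~\ref{thm:matroid+subdivision} from three-term to arbitrary strong Pl\"ucker relations. Let $M$ be a weak $F[\Gamma]$-matroid; by Theorem~\ref{thm:matroid+subdivision} every toric initial matroid $M^\bu$ is a weak $F$-matroid, and perfectness of $F$ promotes each $M^\bu$ to a strong $F$-matroid. Assuming for contradiction that $P$ fails some strong Pl\"ucker relation $\cP_{I,J}$, I would isolate the subset $K \subseteq J \setminus I$ of indices $j$ with $|P(I+j)| + |P(J-j)|$ finite and minimal, and construct $\bu \in \Gamma^E$ so that $\underline{P^\bu}$ contains exactly the pairs $(I+j, J-j)$ for $j \in K$ together with possibly some $I+j$ whose partner is dropped, ensuring $P^\bu$ fails $\cP_{I,J}$ over $F$ and contradicting strength of $M^\bu$.

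The construction mirrors the three-term proof: set $u_j = |P(I+j)|$ for $j \in J \setminus I$ with finite valuation (padding the others conveniently), $u_s = \Omega$ for $s \in I \cap J$ with $\Omega$ arbitrarily large, and $u_s = -\Omega$ for $s \notin I \cup J$, while using the $|I \setminus J| = |J \setminus I| - 2$ coordinates in $I \setminus J$ as free parameters to equate $\psi(J-j)$ with $\psi(I+j)$ for $j \in K$. The main obstacle is precisely this calibration when $|J \setminus I| > 3$: one must choose the $u_i$ for $i \in I \setminus J$ so that simultaneously (a) $\psi(I+j) = \psi(J-j) = W$ for every $j \in K$, (b) $\psi(J-j) > W$ for $j \in (J \setminus I) \setminus K$ that do not achieve the minimum, and (c) no basis $B$ outside the Pl\"ucker relation attains $\psi(B) = W$. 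Condition (c) is delivered by the $\pm\Omega$ weights forcing bases that deviate from the structural pattern $(I \cap J) \cup (I\setminus J \text{ or } J\setminus I \text{ subset})$ to have $\psi$ differ by order $\Omega$; conditions (a) and (b) reduce to a consistent system of linear equalities and strict inequalities in the free parameters, whose feasibility follows because $j_0 \in K$ provides an anchor equation and the remaining $j \in K$ give identical constraints by the minimality defining $K$.
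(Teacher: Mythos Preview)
Your easy direction ($F[\Gamma]$ perfect $\Rightarrow$ $F$ perfect via the embedding $a\mapsto(a,0)$) matches the paper's argument exactly.

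The hard direction has a genuine gap. Your construction of $\bu$ depends only on the modulus data $|P|$, not on $F$; if it succeeded, it would prove that \emph{any} weak $F[\Gamma]$-matroid failing a strong Pl\"ucker relation has some $P^\bu$ failing the same relation, regardless of whether $F$ is perfect. Example~\ref{ex:strong+counterexample} shows this is false. Concretely, with that $|P|$ and $I=\{5,6\}$, $J=\{1,2,3,4\}$, $K=J$, your recipe forces $u_1=\cdots=u_4=0$ and $u_5+u_6=0$; but then $\psi(125)+\psi(126)=2\delta<0=2W$, so one of $125,126$ undercuts $W$ and none of the desired pairs $(I+j,J-j)$ survive in $\underline{P^\bu}$. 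More generally, Lemma~\ref{lem:incidence+pairs} forbids \emph{any} $\bu$ from placing both $156$ and $234$ in $\underline{P^\bu}$, so no construction based solely on $|P|$ can work. The feasibility you assert for conditions (a)--(c) therefore fails: the $|I\setminus J|$ free parameters collapse to a single scalar constraint on $\sum_{i\in I\setminus J}u_i$, which is not enough to suppress spurious minimizers outside the relation. A secondary issue is that you are proving weak $=$ strong over $F[\Gamma]$, whereas the paper's definition of perfect is $\cV(M)^\perp=\cV^*(M)$; only the implication perfect $\Rightarrow$ (weak $=$ strong) is stated here, not the converse.

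The paper does not attempt a self-contained argument for this direction: it cites \cite[Theorem~33]{Bowler+Pendavingh:2019}, whose proof proceeds directly through the vector/covector orthogonality and initial matroids (in the spirit of Proposition~\ref{prop:covectors}) rather than via Pl\"ucker relations, and observes that nothing hyperfield-specific is used so the argument extends to tracts.
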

\begin{proof}
The direction that $F$ is perfect implies $F[\Gamma]$ is perfect is given in~\cite[Theorem 33]{Bowler+Pendavingh:2019} when $F$ is a hyperfield.
However, their proof does not use any special properties of hyperfields, and so can be generalised directly to tracts.
The opposite direction follows immediately from embedding $F \hookrightarrow F[\Gamma]$ by sending $a \mapsto (a,0)$.
\end{proof}

We get the immediate corollary in the case where $F$ is a perfect tact.
\begin{corollary*}(Theorem~\ref{thm:A})
    Let $F$ be a perfect tract and $\Gamma$ an ordered abelian group.
$M$ is a strong $F[\Gamma]$-matroid if and only if $M^\bu$ is a strong $F$-matroid for all $\bu \in \Gamma^E$.
\end{corollary*}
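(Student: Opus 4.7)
The plan is to reduce this corollary directly to Theorem~\ref{thm:matroid+subdivision} by exploiting the fact that, over a perfect tract, the weak and strong matroid axiom systems coincide. So strictly speaking there is almost nothing new to prove; the work is just assembling the right combination of results stated earlier.

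First I would invoke Theorem~\ref{thm:perfect+tract} to upgrade the hypothesis: since $F$ is perfect, the tropical extension $F[\Gamma]$ is also perfect. By the definition of a perfect tract (every weak $F$-matroid is strong, and similarly over $F[\Gamma]$), being a weak $F$-matroid is equivalent to being a strong $F$-matroid, and likewise at the level of $F[\Gamma]$. Therefore the statement ``$M$ is a strong $F[\Gamma]$-matroid'' is equivalent to ``$M$ is a weak $F[\Gamma]$-matroid'', and ``$M^\bu$ is a strong $F$-matroid for all $\bu \in \Gamma^E$'' is equivalent to ``$M^\bu$ is a weak $F$-matroid for all $\bu \in \Gamma^E$''.

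Second, with both sides translated into the language of weak matroids, I apply Theorem~\ref{thm:matroid+subdivision} directly, which gives the equivalence $M$ weak $F[\Gamma]$-matroid $\Longleftrightarrow$ $M^\bu$ weak $F$-matroid for all $\bu \in \Gamma^E$. Chaining these equivalences together yields the statement.

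The only subtle point to double-check is that perfection really can be applied to each $M^\bu$ uniformly: this is fine because perfection of $F$ is a property of the tract, not of any particular matroid, so the equivalence between weak and strong holds for every $F$-matroid simultaneously. I do not anticipate any technical obstacle here — the content of the corollary lies entirely in Theorems~\ref{thm:matroid+subdivision} and~\ref{thm:perfect+tract}, and the proof is essentially a one-line deduction from them.
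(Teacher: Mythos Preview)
Your proposal is correct and matches the paper's own proof essentially line for line: invoke Theorem~\ref{thm:perfect+tract} to deduce that $F[\Gamma]$ is perfect, use that weak and strong matroids coincide over perfect tracts, and then apply Theorem~\ref{thm:matroid+subdivision}. There is nothing to add.
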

\begin{proof}
    By Theorem~\ref{thm:perfect+tract}, if $F$ is a perfect tract then so is $F[\Gamma]$.
    As weak matroids and strong matroids are equivalent over perfect tracts, the claim immediate follows from Theorem~\ref{thm:matroid+subdivision}.
\end{proof}

We end this section by showing that Theorem~\ref{thm:matroid+subdivision} does not hold for non-perfect tracts.
Explicitly, we construct a $F[\Gamma]$-matroid $M$ that is not strong but whose initial matroids $M^\bu$ are all strong.
This is done in the following variation of~\cite[Example 3.30]{Baker+Bowler:2019}.

\begin{example}[A weak matroid with strong initial matroids] 
\label{ex:strong+counterexample}
Let $\VV = (\RR_{\geq 0}, \boxplus, \odot)$ be the triangle hyperfield, whose operations are the usual multiplication on $\RR_{\geq 0}$ and the hyperaddition
\begin{align*}
    a \boxplus b = \left\{c \in \RR_{\geq 0} \mid |a -b| \leq c \leq a+b \right\} \, .
\end{align*}
As we have a well-defined (hyper)addition, we will work with it directly rather than just the null set.

Let $\Gamma \neq \{0\}$ be any non-trivial ordered abelian group and let $\delta \in \Gamma$ with $\delta < 0$.
Define the function $P \colon {6 \choose 3} \rightarrow \VV[\Gamma]$ by:
\begin{align*}
    \theta(P(ijk)) &= \begin{cases}
    4 & \{ijk\} = \{156\} \\
    2 & i = 1, \, j \in \{234\}, \, k \in \{56\} \\
    1 & \text{otherwise}
    \end{cases} \, ,
    &
    |P(I)| &= \begin{cases}
    0 & I \subset \{1234\} \text{ or } \{56\} \subset I \\
    \delta & \text{otherwise}
    \end{cases} \, .
\end{align*}
It is a straightforward case analysis to show that $P$ satisfies all the three-term Pl\"ucker relations, and is therefore the Pl\"ucker vector of a weak $\VV[\Gamma]$-matroid $M$.
However, $M$ is not a strong $\VV[\Gamma]$-matroid, as $P$ does not satisfy the four-term Pl\"ucker relation $\cP_{56,1234}$:
\begin{align*}
\cP_{56,1234} \, &\colon \, P(156)\odot P(234) \, \boxplus \, P(256)\odot P(134) \, \boxplus \, P(356)\odot P(124) \, \boxplus \, P(456)\odot P(123) \\
&= (4,0) \boxplus (1,0)  \boxplus (1,0)  \boxplus (1,0) \\
&= (4,0) \boxplus \left(\{(\lambda,0) \mid \lambda \in (0,3]\} \cup (\RR_{>0} \times \RR_{>0})\cup \{\infty\}\right) \\
&= \left\{(\lambda,0) \mid \lambda \in [1,7]\right\} \not\ni \infty
\end{align*}
However, it is another straightforward check to verify that this is the only Pl\"ucker relation that $P$ does not satisfy.

By Remark~\ref{rem:toric+subdivision}, it suffices to just check whether the toric initial matroids are strong.
Consider some arbitrary $\bu\in \Gamma^6$ and the initial matroid $M^\bu$ it defines.
By Proposition~\ref{prop:ext+implies+initial}, if $P$ satisfies the Pl\"ucker relation $\cP_{I;J}$, then so does each of its initial matroids.
Hence $P^\bu$ satisfies all Pl\"ucker relations aside from possibly $\cP_{56;1234}$: we show that this is also satisfied.

Assume that both $P^\bu(156) \neq \0$ and $P^\bu(234) \neq \0$, or equivalently that both $156, 234$ are contained in $\underline{P^\bu}$, the support of $P^\bu$.
From the definition of toric initial matroids, we have
\begin{align*}
|P(156)| + |P(234)| - \sum_{i=1}^6 u_i &\leq |P(I)| + |P([6] \setminus I)| - \sum_{i=1}^6 u_i \\
\Longrightarrow \quad |P(156)| + |P(234)| &\leq |P(I)| + |P([6] \setminus I)| \quad \forall I \in {6 \choose 3} \, .
\end{align*}
But this is not true: setting $I = 125$, we see $|P(125)| + |P(346)| = 2\delta < 0 = |P(156)| + |P(234)|$.
As such, there is no initial matroid $M^\bu$ with both $P^\bu(156) \neq \0$ and $P^\bu(234) \neq \0$.
This argument can be repeated for any pair of complementary subsets that appear in a monomial of $\cP_{56;1234}$.
Hence, $\cP_{56;1234}$ is just the zero expression for all $P^\bu$, and so trivially satisfied.
Therefore, all of the initial matroids $M^\bu$ of $M$ are strong $\VV$-matroids.
\end{example}

\subsection{Circuits, vectors and duality for initial matroids}\label{sec:initial+circuits}

In this section, we enrich the structure of initial $F$-matroids by describing their duals and (co)circuits.
We then use this to characterise the (co)vectors of $F[\Gamma]$-matroids.
Throughout we let $M$ be an $F[\Gamma]$-matroid with Pl\"ucker vector $P \colon {E \choose r} \rightarrow F[\Gamma]$ and $\bu \in (\Gamma \cup \{\infty\})^E$.
We will often highlight the (much simpler) special case where $M^\bu$ is toric, i.e. $\bu \in \Gamma^E$.
Note that all results in this section hold for both weak and strong $F[\Gamma]$-matroids.

We begin by describing the dual of $M^\bu$.

\begin{proposition}\label{prop:initial+dual}
Let $M$ be an $F[\Gamma]$-matroid and $\bu \in (\Gamma \cup \{\infty\})^E$.
Then
\[
(M^\bu)^* = (M^* \setminus Z_\bu)^{-\bu^\circ} \loopsum Z_\bu \, .
\]
If $M^\bu$ is toric, then $(M^\bu)^* = (M^*)^{-\bu}$.
\end{proposition}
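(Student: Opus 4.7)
The plan is to handle the toric case $\bu \in \Gamma^E$ first by comparing Pl\"ucker vectors directly, then bootstrap to the general case via the decomposition $M^\bu = (M/Z_\bu)^{\bu^\circ} \coloopsum Z_\bu$ together with the duality identities for minors and loop/coloop extensions.

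For the toric step, write $n = |E|$ and set $f(X) = |P(X)| - \sum_{i \in X} u_i$ on ${E \choose r}$ and $g(Y) = |P^*(Y)| + \sum_{i \in Y} u_i$ on ${E \choose n-r}$. The support of $(P^\bu)^*$ consists of those $B \in {E \choose n-r}$ for which $B^c \in \argmin f$, while the support of $(P^*)^{-\bu}$ is exactly $\argmin g$. The identity $|P^*(Y)| = |P(Y^c)|$ (which uses that $|\cdot|$ is a tract homomorphism and that $|{-\1}| = 0_\Gamma$) combined with $\sum_{i \in Y} u_i = \sum_E u_i - \sum_{i \in Y^c} u_i$ yields $g(Y) = f(Y^c) + \sum_E u_i$, so the two $\argmin$ conditions agree under complementation. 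On this common support, the values
\[
(P^\bu)^*(B) = \sign(B,B^c)\cdot \overline{\theta(P(B^c))} \quad \text{and} \quad (P^*)^{-\bu}(B) = \theta\bigl(\sign(B,B^c)\cdot\overline{P(B^c)}\bigr)
\]
coincide because $\theta$ is multiplicative on $F[\Gamma]^\times$, commutes with the involution, and fixes $\pm\1 = (\pm\1_F, 0)$.

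For the general case, I would dualize the decomposition $M^\bu = (M/Z_\bu)^{\bu^\circ} \coloopsum Z_\bu$. The identity $(N \coloopsum A)^* = N^* \loopsum A$ follows from Proposition~\ref{prop:loop+coloop} by substituting $N = M^*$ and using $M^{**} = M$, hence
\[
(M^\bu)^* = \bigl((M/Z_\bu)^{\bu^\circ}\bigr)^* \loopsum Z_\bu \, .
\]
Applying the toric case gives $\bigl((M/Z_\bu)^{\bu^\circ}\bigr)^* = ((M/Z_\bu)^*)^{-\bu^\circ}$, and Definition~\ref{def:minor} provides $(M/Z_\bu)^* = M^* \setminus Z_\bu$; substitution yields the stated formula.

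The hard part will be keeping the sign and involution bookkeeping clean in the toric step: one must verify that $|\cdot|$ sends $\sign(B, B^c) \in F[\Gamma]^\times$ to $0_\Gamma \in \Gamma$, and that $\theta$ intertwines appropriately with $\overline{\cdot}$ and with scalar multiplication by $\pm\1$. Once these compatibilities are in hand, the remainder is essentially a symmetric bijection: complementation exchanges the $\argmin$ conditions for $f$ and $g$, and the values agree because $P^*$ is itself defined via a sign-twisted conjugation of $P$.
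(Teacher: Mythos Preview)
Your proposal is correct and follows essentially the same approach as the paper: compare Pl\"ucker vectors directly in the toric case via the complementation identity $g(Y) = f(Y^c) + \sum_E u_i$, then bootstrap to the non-toric case through the chain $(M^\bu)^* = ((M/Z_\bu)^{\bu^\circ} \coloopsum Z_\bu)^* = ((M/Z_\bu)^{\bu^\circ})^* \loopsum Z_\bu = ((M/Z_\bu)^*)^{-\bu^\circ} \loopsum Z_\bu = (M^*\setminus Z_\bu)^{-\bu^\circ} \loopsum Z_\bu$. Your treatment is slightly more explicit about why $|\cdot|$ kills signs and why $\theta$ commutes with the involution and with $\pm\1$, but the structure and key steps are identical to the paper's proof.
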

\begin{proof}
We first show the toric initial matroid case where $\bu \in \Gamma^E$, i.e. $(M^\bu)^* = (M^*)^{-\bu}$.
We show that $(M^\bu)^*$ and $(M^*)^{-\bu}$ have the same Pl\"ucker vector.
Recall that the dual Pl\"ucker vector $P^*$ associated with $M^*$ is defined as
\[
P^*(I) = \sign(I,I^c)\cdot \overline{P(I^c)} \, .
\]
Its initial Pl\"ucker vector with respect to $-\bu$ is
\[
(P^*)^{-\bu}(I) =
\begin{cases}
\theta(P^*(I)) & |P^*(I)| + \sum\limits_{i \in I} u_i \leq |P^*(J)| + \sum\limits_{j \in J} u_j \; \forall J \in {E \choose |E|-r} \\
\0 & \text{otherwise}
\end{cases} \, .
\]
From the definition of $P^*$, and that signs and involutions do not appear over $\KK[\Gamma]$, we can rewrite the inequality
\begin{align}\label{eq:dual+initial}
|P^*(I)| + \sum_{i \in I} u_i \leq |P^*(J)| + \sum_{j \in J} u_j \quad \Longleftrightarrow \quad |P(I^c)| - \sum_{i \in I^c} u_i \leq |P(J^c)| - \sum_{j \in J^c}  u_j \, ,
\end{align}
where we have subtracted the sum of the coordinates of $\bu$ from either side.
Conversely, we can write $(P^\bu)^*$ as
\[
(P^\bu)^*(I) = \sign(I,I^c)\cdot \overline{P^\bu(I^c)} =
\begin{cases}
\sign(I,I^c)\cdot \overline{\theta(P(I^c))} & |P(I^c)| - \sum\limits_{i \in I^c} u_i \leq |P(J^c)| - \sum\limits_{j \in J^c}  u_j \forall J \in {E \choose |E|-r} \\
0 & \text{otherwise}
\end{cases} \, .
\]
As $\theta(P^*(I)) = \sign(I,I^c)\cdot \overline{\theta(P(I^c))}$, and the two inequality conditions are equivalent from~\eqref{eq:dual+initial}, the Pl\"ucker vectors $(P^*)^{-\bu}$ and $(P^\bu)^*$ are the same.

The non-toric case follows from the following chain of implications:
\begin{align*}
(M^\bu)^* &= \left((M/Z_\bu)^{\bu^\circ} \coloopsum Z_\bu\right)^* & & \\
&= \left((M/Z_\bu)^{\bu^\circ}\right)^* \loopsum Z_\bu & &\text{(Proposition~\ref{prop:loop+coloop})} \\
&= \left((M/Z_\bu)^*\right)^{-\bu^\circ} \loopsum Z_\bu & &\text{($(M^\bu)^* = (M^*)^{-\bu}$ in toric case)} \\
&= \left(M^* \setminus Z_\bu\right)^{-\bu^\circ} \loopsum Z_\bu & &\text{(Definition~\ref{def:minor})} \, .
\end{align*}
\end{proof}

We next characterise the circuits and cocircuits of $M^\bu$.
Given some $F[\Gamma]$-circuit $C \in \cC(M)$, define the \emph{initial circuit} $C^\bu \in F^E$ as
\begin{equation} \label{eq:initial+circuit}
C^\bu_i = \begin{cases}
\theta(C_i) & \infty \neq |C_i| + u_i \leq |C_j| + u_j \, \forall j \in E \\
\0 & \text{otherwise}
\end{cases} \, .
\end{equation}

\begin{proposition}\label{prop:initial+circuits}
Let $M$ be an $F[\Gamma]$-matroid and $\bu \in (\Gamma \cup \{\infty\})^E$.
The circuits and cocircuits of $M^\bu$ are precisely 
\begin{align*}
\cC(M^\bu) &= \MinSupp(\left\{C^\bu \mid C \in \cC(M)\right\}) \, , \\
\cC^*(M^\bu) &= \MinSupp\left(\left\{(D\setminus Z_u)^{-\bu^\circ} \times \{\underline{\0}_{Z_\bu}\} \mid D \in \cC^*(M) \, , \, \underline{D} \cap Z_\bu = \emptyset\right\}\right) \cup \{\alpha \cdot\chi_i \mid \alpha \in F^\times \, , \, i \in Z_\bu \} \, .
\end{align*}
If $M^\bu$ is toric, then the cocircuits are
$\cC^*(M^\bu) = \MinSupp(\left\{D^{-\bu} \mid D \in \cC^*(M)\right\})$ and the circuits are as above.
\end{proposition}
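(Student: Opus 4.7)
The proof is organised around the toric case ($\bu \in \Gamma^E$), with the non-toric case reduced via Definition~\ref{def:initial+tuple}. In the toric setting I would start from the Pl\"ucker-vector formula~\eqref{eq:circuits} applied to $P^\bu$ and compare it with the initial-circuit formula~\eqref{eq:initial+circuit} for $(C_\tau)^\bu$. Setting $l_\tau = \min_{i_s \in \tau}\bigl(|P(\tau-i_s)| + u_{i_s}\bigr) - \sum_{j \in \tau} u_j$ and $g = \min_{B \in \binom{E}{r}}\bigl(|P(B)| - \sum_{j \in B} u_j\bigr)$, one observes that $C_\tau^{P^\bu} \neq \underline{\0}$ iff $l_\tau = g$, in which case the indices achieving the local minimum over $\tau$ coincide with those achieving the global minimum. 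Hence $C_\tau^{P^\bu} = (C_\tau)^\bu$ whenever the former is non-zero, giving $\cC(M^\bu) \subseteq \{C^\bu : C \in \cC(M)\}$.

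To promote this inclusion to a $\MinSupp$ equality, I would transfer orthogonality from $F[\Gamma]$ to $F$. For $C \in \cC(M)$ and $D \in \cC^*(M)$, the decomposition $|C_i \overline{D_i}| = (|C_i|+u_i) + (|D_i|-u_i)$ shows that the tropical minimum of $|C_i \overline{D_i}|$ over $i \in E$ is attained exactly at the joint support of $C^\bu$ and $D^{-\bu}$. Since $\theta$ is multiplicative, the tropical-minimum selection governing $N_{F[\Gamma]}$ descends to $\sum_i C^\bu_i \overline{D^{-\bu}_i} \in N_F$, i.e.\ $C^\bu \perp D^{-\bu}$ in $F$. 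Applying the circuit inclusion to the pair $(M^*, -\bu)$ and invoking Proposition~\ref{prop:initial+dual} yields $\cC^*(M^\bu) = \cC((M^*)^{-\bu}) \subseteq \{D^{-\bu} : D \in \cC^*(M)\}$, so every non-zero $C^\bu$ is orthogonal to every cocircuit of $M^\bu$ and therefore lies in $\cV(M^\bu)$. The chain $\cC(M^\bu) \subseteq \{C^\bu : C \in \cC(M)\} \subseteq \cV(M^\bu) \cup \{\underline{\0}\}$ then collapses to the claimed identity, because circuits are the minimum-support non-zero vectors. Re-running the argument for $(M^*, -\bu)$ delivers the toric cocircuit formula.

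For general $\bu$ I would unfold $M^\bu = (M/Z_\bu)^{\bu^\circ} \coloopsum Z_\bu$. Proposition~\ref{prop:loop+coloop}'s coloop formula and the toric circuit case reduce $\cC(M^\bu)$ to the circuits of $M/Z_\bu$, which Definition~\ref{def:minor} expresses as $\MinSupp\bigl(\{C \setminus Z_\bu : C \in \cC(M)\}\bigr)$. Inspecting~\eqref{eq:initial+circuit} with $u_i = \infty$ for $i \in Z_\bu$ verifies $(C \setminus Z_\bu)^{\bu^\circ} \times \{\underline{\0}_{Z_\bu}\} = C^\bu$ whenever $\underline{C} \not\subseteq Z_\bu$ and $C^\bu = \underline{\0}$ otherwise, and the orthogonality argument above carries over by restricting $C \perp D$ to $E \setminus Z_\bu$ whenever $\underline{D} \cap Z_\bu = \emptyset$, so again every non-zero $C^\bu$ is a vector of $M^\bu$. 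The cocircuit formula follows analogously by rewriting $(M^\bu)^* = (M^* \setminus Z_\bu)^{-\bu^\circ} \loopsum Z_\bu$ via Proposition~\ref{prop:initial+dual} and applying the loop side of Proposition~\ref{prop:loop+coloop}, which contributes the $\{\alpha \cdot \chi_i : i \in Z_\bu\}$ summand; the remaining part is handled by the toric cocircuit case combined with the deletion formula from Definition~\ref{def:minor}.

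The main obstacle is the orthogonality transfer: the non-trivial fact is that the tropically minimal positions of $|C_i \overline{D_i}|$ coincide with the joint support of $C^\bu$ and $D^{-\bu}$, which is precisely what licenses $\theta$ to preserve the null-set condition and what allows us to sidestep the otherwise delicate nested $\MinSupp$ comparisons that would arise from a direct circuit-by-circuit approach. Bootstrapping between the circuit and cocircuit inclusions is then handled cleanly by using Proposition~\ref{prop:initial+dual} as a bridge, avoiding any circularity between the two halves of the statement.
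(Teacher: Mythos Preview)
Your overall architecture matches the paper's: obtain $\cC(M^\bu)\subseteq\{C^\bu:C\in\cC(M)\}$ by comparing the Pl\"ucker-circuit formula for $P^\bu$ with~\eqref{eq:initial+circuit}, prove the orthogonality $C^\bu\perp D^{-\bu}$, dualise via Proposition~\ref{prop:initial+dual} to get the cocircuit inclusion, sandwich the initial circuits between $\cC(M^\bu)$ and $\cV(M^\bu)$ to upgrade to a $\MinSupp$ equality, and finally unfold Definition~\ref{def:initial+tuple} together with Proposition~\ref{prop:loop+coloop} and Definition~\ref{def:minor} for the non-toric case.

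The gap is in the orthogonality transfer, which you rightly flag as the main obstacle but do not actually establish. You assert that the decomposition $|C_i|+|D_i|=(|C_i|+u_i)+(|D_i|-u_i)$ shows the minimum of $|C_i|+|D_i|$ is attained \emph{exactly} at $I^\bu:=\underline{C^\bu}\cap\underline{D^{-\bu}}$. The decomposition only yields $I^\bu\subseteq I$ (minimality of both summands forces minimality of the sum); the reverse inclusion is false in general. For instance, with two indices and $(|C_1|+u_1,|C_2|+u_2)=(0,1)$, $(|D_1|-u_1,|D_2|-u_2)=(1,0)$ one gets $I=\{1,2\}$ but $I^\bu=\emptyset$. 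If it ever happened that $\emptyset\neq I^\bu\subsetneq I$, the condition $\sum_{i\in I}\theta(C_i)\overline{\theta(D_i)}\in N_F$ would say nothing about $\sum_{i\in I^\bu}\theta(C_i)\overline{\theta(D_i)}$, and the transfer would fail. What the paper proves (Lemma~\ref{lem:init+circs+orth}) is the dichotomy $I^\bu\in\{I,\emptyset\}$: if some $i\in I\setminus I^\bu$, say $i\notin\underline{C^\bu}$, then for every $k\in\underline{C^\bu}$ one has $|C_k|+|D_k|\geq|C_i|+|D_i|>(|C_k|+u_k)+(|D_i|-u_i)$, whence $|D_k|-u_k>|D_i|-u_i$ and $k\notin\underline{D^{-\bu}}$, forcing $I^\bu=\emptyset$. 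This short argument is the missing ingredient; with it in hand, the rest of your proof goes through as written.
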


To prove this, we first need a lemma showing that the initial circuits are all orthogonal to the initial cocircuits of $M$.
\begin{lemma}\label{lem:init+circs+orth}
Let $M$ be an $F[\Gamma]$-matroid and $\bu \in (\Gamma \cup \{\infty\})^E$.
For all $C \in \cC(M)$ and $D \in \cC^*(M)$ with $\underline{D} \cap Z_\bu = \emptyset$, we have
\[
C^\bu \perp ((D \setminus Z_\bu)^{-\bu^\circ} \times \{\underline{\0}_{Z_\bu}\}) \, .
\]
If $\bu \in \Gamma^E$, then $C^\bu \perp D^{-\bu}$ for all $C \in \cC(M)$ and $D \in \cC^*(M)$.
\end{lemma}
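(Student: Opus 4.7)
The plan is to prove the toric case first as a purely algebraic statement about the tract $F[\Gamma]$ — one that never uses that $C$ and $D$ are actual (co)circuits, only that they are orthogonal — and then to reduce the non-toric case to it by restriction.

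For the toric case $\bu \in \Gamma^E$, the key identity is
\[
|C_i| + |D_i| \;=\; \bigl(|C_i| + u_i\bigr) \;+\; \bigl(|D_i| - u_i\bigr) \qquad \forall\, i \in E.
\]
Set $a = \min_j(|C_j| + u_j)$ and $b = \min_j(|D_j| - u_j)$. Then every $\Gamma$-value $|C_i| + |D_i|$ appearing as a summand of $\iprod{C}{D}$ is at least $a+b$, with equality iff $i$ simultaneously achieves both minima — and these two sets of minimizers are exactly the supports $\underline{C^\bu}$ and $\underline{D^{-\bu}}$. First I would use this to identify
\[
\iprod{C^\bu}{D^{-\bu}} \;=\; \sum_{i \in \underline{C^\bu} \cap \underline{D^{-\bu}}} \theta(C_i)\cdot \overline{\theta(D_i)} \;=\; \sum_{i \in I_{\min}} \theta\bigl(C_i \cdot \overline{D_i}\bigr),
\]
invoking that $\theta$ is multiplicative and commutes with the involution, and where $I_{\min}$ denotes the $\Gamma$-minimizers of $\iprod{C}{D}$. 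Since $C \perp D$ over $F[\Gamma]$, the very definition of $N_{F[\Gamma]}$ then gives that this last sum lies in $N_F$, i.e.\ $C^\bu \perp D^{-\bu}$. Degenerate cases (when $a$ or $b$ equals $\infty$) make $C^\bu$ or $D^{-\bu}$ the zero tuple and the conclusion is trivial.

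For the non-toric case, both $C^\bu$ and $(D\setminus Z_\bu)^{-\bu^\circ} \times \{\underline{\0}_{Z_\bu}\}$ vanish on $Z_\bu$: the second by construction, and the first because $u_i = \infty$ forces $|C_i| + u_i = \infty$, so $C^\bu_i = \0$ for $i \in Z_\bu$. Hence their inner product collapses to one over $E' = E \setminus Z_\bu$. Since $\underline{D} \cap Z_\bu = \emptyset$, the restricted vectors $C|_{E'}$ and $D|_{E'}$ are still orthogonal over $F[\Gamma]$ (the omitted coordinates contributed only zero summands), and one checks directly from the initial-circuit formula that $C^\bu|_{E'} = (C|_{E'})^{\bu^\circ}$. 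Applying the toric case to these restrictions over $E'$ with the now-finite $\bu^\circ$ finishes the job. The main obstacle I anticipate is essentially bookkeeping: carefully handling the degenerate $\infty$ minima, and explicitly flagging that the toric argument only needed $F[\Gamma]$-orthogonality of two tuples, so it legitimately applies to the restrictions.
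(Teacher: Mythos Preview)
Your approach is the same as the paper's: both hinge on the identity $|C_i|+|D_i| = (|C_i|+u_i)+(|D_i|-u_i)$ in the toric case, then reduce the non-toric case by restricting to $E\setminus Z_\bu$. The non-toric reduction you sketch matches the paper's exactly, including the observation that the toric argument only uses $C\perp D$ and not that $C,D$ are genuine (co)circuits.

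There is one small gap in your toric argument. You assert that $\underline{C^\bu}\cap\underline{D^{-\bu}}$ equals the set $I_{\min}$ of $\Gamma$-minimizers of $\iprod{C}{D}$. What you actually prove is that $\underline{C^\bu}\cap\underline{D^{-\bu}} = \{i : |C_i|+|D_i| = a+b\}$. This coincides with $I_{\min}$ only when the lower bound $a+b$ is achieved, i.e.\ when the intersection is nonempty. If $\underline{C^\bu}\cap\underline{D^{-\bu}}=\emptyset$, the actual minimum of $|C_i|+|D_i|$ can be strictly larger than $a+b$, so $I_{\min}$ may be nonempty while your intersection is empty, and your displayed equality fails. (Take $E=\{1,2\}$, $\bu=(0,0)$, $|C|=(0,1)$, $|D|=(1,0)$: then $a=b=0$ but both $|C_i|+|D_i|=1$.) The fix is immediate: if $\underline{C^\bu}\cap\underline{D^{-\bu}}=\emptyset$ then $\iprod{C^\bu}{D^{-\bu}}=\0\in N_F$ trivially; otherwise the lower bound $a+b$ is achieved, $\underline{C^\bu}\cap\underline{D^{-\bu}} = I_{\min}$, and your argument goes through. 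The paper makes exactly this dichotomy explicit (phrased as: either $I^\bu=I$ or $I^\bu=\emptyset$), and once you add that line your proof agrees with theirs.
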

\begin{proof}
We first show the toric case where $\bu \in \Gamma^E$.
Let $I = \SetOf{i \in \underline{C} \cap \underline{D}}{|C_i| + |D_i| \leq |C_j| + |D_j| \quad \forall j \in E}$.
As $C\perp D$, this implies
\[
\sum_{i \in I} \theta(C_i) \cdot \overline{\theta(D_i)} \in N_F \, .
\]
Let $I^\bu = \underline{C^\bu}\cap\underline{D^{-\bu}}$, then
\[
\langle C^\bu , D^{-\bu} \rangle = \sum_{i \in I^\bu} \theta(C_i) \cdot \overline{\theta(D_i)} \, .
\]
If $I^\bu = I$ or $I^\bu = \emptyset$, then $C^\bu \perp D^{-\bu}$.
We show that one of these cases always occurs.

We first observe that for all $i \in I^\bu$
\begin{align*}
\begin{split}
|C_i| + u_i \leq |C_j| + u_j \\
|D_i| - u_i \leq |D_j| - u_j \\
\end{split} \quad \forall j \in E \quad \Rightarrow \quad |C_i| + |D_i| \leq |C_j| + |D_j| \quad \forall j \in E \, .
\end{align*}
This implies $I^\bu \subseteq I$: if they are equal then we are done.
If they are not equal then there exists $i \in I \setminus I^\bu$.
Without loss of generality $i \notin \underline{C^\bu}$, and so we have $|C_i| + u_i > |C_k| + u_k$ for all $k \in \underline{C^\bu}$.
This implies for all $k \in \underline{C^\bu}$:
\begin{align*}
(|C_k| + u_k) + (|D_k| - u_k) &= |C_k| + |D_k| \geq |C_i| + |D_i| = (|C_i| + u_i) + (|D_i| - u_i) \\
&> (|C_k| + u_k) + (|D_i| - u_i) \\
\Longrightarrow \quad |D_k| - u_k &> |D_i| - u_i  \quad \forall k \in \underline{C^\bu} \, .
\end{align*}
In particular, $k \notin \underline{D^{-\bu}}$.
This implies $I^\bu = \emptyset$.

In the non-toric case, we note that $\underline{D} \cap Z_\bu = \emptyset$ implies that $I \cap Z_\bu = \emptyset$.
Moreover, we have that 
\[
\langle C^\bu, (D \setminus Z_\bu)^{-\bu^\circ} \times \{\underline{\0}_{Z_\bu}\} \rangle = \sum_{i \in E \setminus Z_\bu} C^\bu_i \cdot \overline{D^{-\bu}_i} 
= \langle (C \setminus Z_\bu)^{\bu^\circ}, (D \setminus Z_\bu)^{-\bu^\circ}\rangle
\]
as $C^\bu_i = \0$ for all $i \in Z_\bu$.
The proof now follows from the toric case applied to $(C \setminus Z_\bu)^{\bu^\circ}$ and $(D \setminus Z_\bu)^{-\bu^\circ}$.
\end{proof}

\begin{proof}[Proof of Proposition~\ref{prop:initial+circuits}]
We first prove the statement for toric initial matroids, and then deduce the non-toric case from that.

We first show the containment $(\subseteq)$ for circuits.
Recall that the Pl\"ucker vectors of $M^\bu$ are of the form
\[
P^\bu(I) = \begin{cases}
\theta(P(I)) & |P(I)| - \sum\limits_{i\in I} u_i \leq |P(J)| - \sum\limits_{j\in J} u_j  \quad \forall J \in {E \choose r} \\
\0 & \text{otherwise}
\end{cases} \, .
\]
Using~\eqref{eq:circuits}, we see that the circuits $X \in \cC(M^\bu)$ of $M^\bu$ are all of of the form
\begin{equation} \label{eq:initial+circuit+1}
X_{i_s} = 
\begin{cases}
(-1)^s \alpha\cdot\theta(P(\tau -i_s)) & |P(\tau - i_s)| - \sum\limits_{k \neq s} u_{i_k} \leq |P(J)| - \sum\limits_{j\in J} u_j \quad \forall J \in {E \choose r} \\
\0 & \text{otherwise}
\end{cases}
\end{equation}
for some $\tau \in {E \choose r+1}$ and $\alpha \in F^\times$.
Conversely, as each $C \in \cC(M)$ is equal to $\alpha \cdot C_\tau$ for $\tau \supset \underline{C}$ and some $\alpha \in F^\times$, its initial circuit $C^\bu$ is of the form
\begin{equation} \label{eq:initial+circuit+2}
(C^\bu)_{i_s} = \begin{cases}
(-1)^s \theta(\alpha \cdot P(\tau -i_s)) & |P(\tau - i_s)| - \sum\limits_{k \neq s} u_{i_k} \leq |P(\tau - j)| - \sum\limits_{k \in\tau - j} u_{k} \quad \forall j \in \tau \\
\0 & \text{otherwise}
\end{cases} \, .
\end{equation}
Note that we obtain the inequality above by subtracting $\sum_{i \in \tau} u_i$ from both sides of the inequality in~\eqref{eq:initial+circuit}.
The only difference between \eqref{eq:initial+circuit+1} and \eqref{eq:initial+circuit+2} is that the inequality in the latter ranges only over $r$-subsets of $\tau$ rather than all $r$-subsets.
This immediately tells us that
\[
\cC(M^\bu) \subseteq \{C^\bu \mid C \in \cC(M)\} \, .
\]
Moreover, the $F$-circuit axioms~\cite[Definition 3.8]{Baker+Bowler:2019} insist that elements of $\cC(M^\bu)$ must be non-zero and have minimal support, giving the containment $(\subseteq)$ of the claim for circuits.
The containment for cocircuits follows from the circuit claim along with Proposition~\ref{prop:initial+dual}, observing that:
\begin{align*}
\cC^*(M^\bu) &= \cC((M^\bu)^*) = \cC((M^*)^{-\bu}) \\
&\subseteq \MinSupp(\left\{D^{-\bu} \mid D \in \cC(M^*)\right\}) \\
&= \MinSupp(\left\{D^{-\bu} \mid D \in \cC^*(M)\right\}) \, .
\end{align*}

For the converse, Theorem~\ref{thm:dual+matroid} states that the circuits are precisely the vectors orthogonal to cocircuits of minimal support.
Lemma~\ref{lem:init+circs+orth} tells us that all initial circuits are orthogonal to all initial cocircuits $D^{-\bu}$.
As this includes all cocircuits of $M^\bu$, we deduce that the initial circuits of minimal support are a subset of $\cC(M)$.
A dual argument holds for cocircuits.
%

For the non-toric case, the statement holds for circuits by the following chain of implications:
\begin{align*}
\cC(M^\bu) &= \cC((M/Z_\bu)^{u^\circ} \coloopsum Z_\bu) & & \\
&= \cC((M/Z_\bu)^{\bu^\circ}) \times \{\underline{\0}_{Z_\bu}\} & &\text{(Proposition~\ref{prop:loop+coloop})} \\
&= \MinSupp((\widetilde{C})^{\bu^\circ} \mid \widetilde{C} \in \cC(M/Z_\bu)) \times \{\underline{\0}_{Z_\bu}\} & &\text{(Toric case)} \\
&= \MinSupp((C \setminus Z_\bu)^{\bu^\circ} \mid C \in \cC(M)) \times \{\underline{\0}_{Z_\bu}\} & &\text{(Definition~\ref{def:minor})} \\
&= \MinSupp(C^\bu \mid C \in \cC(M)) 
\end{align*}
where the final equality holds as $u_i = \infty$ implies that $C^\bu_i = \0$.
For cocircuits, a similar deduction holds:
\begin{align*}
\cC^*(M^\bu) &= \cC((M/Z_\bu)^{u^\circ} \coloopsum Z_\bu)^*) & & \\
&= \cC((M^* \setminus Z_\bu)^{-\bu^\circ} \loopsum Z_\bu) & &\text{(Proposition~\ref{prop:initial+dual})} \\
&= \cC((M^* \setminus Z_\bu)^{-\bu^\circ}) \cup \{\alpha \cdot\chi_i \mid \alpha \in F^\times \, , \, i \in Z_\bu \} & &\text{(Proposition~\ref{prop:loop+coloop})} \\
&= \MinSupp((\widetilde{D})^{-\bu^\circ} \mid \widetilde{D} \in \cC(M^* \setminus Z_\bu))  \cup \{\alpha \cdot\chi_i \mid \alpha \in F^\times \, , \, i \in Z_\bu \} & &\text{(Toric case)} \\
&= \MinSupp((D\setminus Z_\bu)^{-\bu^\circ} \mid D\setminus Z_\bu \in \cC^*(M) \setminus Z_\bu)  \cup \{\alpha \cdot\chi_i \mid \alpha \in F^\times \, , \, i \in Z_\bu \} & &\text{(Definition~\ref{def:minor})} \\
&= \MinSupp\left(\left\{(D\setminus Z_u)^{-\bu^\circ} \times \{\underline{\0}_{Z_\bu}\} \mid D \in \cC^*(M)\, , \, \underline{D} \cap Z_\bu = \emptyset\right\}\right) \cup \{\alpha \cdot\chi_i \mid \alpha \in F^\times \, , \, i \in Z_\bu \} \, .
\end{align*}
\end{proof}

With a characterisation of (co)circuits of initial matroids complete, we close the section with a description of the (co)vectors of an $F[\Gamma]$-matroid.
\begin{proposition}\label{prop:covectors}
Let $M$ be an $F[\Gamma]$-matroid.
The vectors and covectors of $M$ are subsets
\begin{align*}
\cV(M) &\subseteq \SetOf{Y \in F[\Gamma]^E}{\theta(Y) \in \cV(M^{|Y|})} \, , \\
\cV^*(M) &\subseteq \SetOf{X \in F[\Gamma]^E}{\theta(X) \in \cV^*(M^{|X|})} \, .
\end{align*}
Moreover, if $F$ is a perfect tract then these are equalities.
\end{proposition}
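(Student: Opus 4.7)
My plan is to prove the covector inclusion directly, deduce the vector inclusion by applying the covector statement to $M^*$ and reorganising through Propositions~\ref{prop:initial+dual} and~\ref{prop:loop+coloop}, and close the equalities in the perfect case by promoting orthogonality against minimal-support initial circuits to orthogonality against every $C^{|X|}$ via Lemma~\ref{lem:init+circs+orth}.

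For the covector inclusion, fix $X \in \cV^*(M)$, so $\langle X, C \rangle \in N_{F[\Gamma]}$ for every $C \in \cC(M)$. By Proposition~\ref{prop:initial+circuits} every circuit of $M^{|X|}$ is a support-minimal $C^{|X|}$ for some $C \in \cC(M)$, so it suffices to verify $\theta(X) \perp C^{|X|}$ for each such $C$. Unpacking the null set of $F[\Gamma]$: the $\Gamma$-parts of the summands of $\langle X, C \rangle$ are $|X_i|+|C_i|$, and by construction the indices attaining the finite minimum are precisely $\underline{C^{|X|}}$; the $F$-part sum over that set must lie in $N_F$ and coincides with $\langle \theta(X), C^{|X|} \rangle$. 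Applying this argument to $M^*$ yields $\theta(Y) \in \cV^*((M^*)^{|Y|})$ for every $Y \in \cV(M) = \cV^*(M^*)$, which Proposition~\ref{prop:initial+dual} translates into $\theta(Y) \in \cV(M^{|Y|})$.

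For the reverse containment when $F$ is perfect, Theorem~\ref{thm:perfect+tract} lifts perfection to $F[\Gamma]$, so $\cV^*(M^{|X|}) = \cV(M^{|X|})^\perp$. Lemma~\ref{lem:init+circs+orth} shows that every $C^{|X|}$ is orthogonal to every initial cocircuit of $M^{|X|}$, placing it inside $\cV(M^{|X|})$; the hypothesis $\theta(X) \in \cV^*(M^{|X|})$ therefore upgrades to $\theta(X) \perp C^{|X|}$ for \emph{every} $C \in \cC(M)$, not merely those of minimal support. Running the descent of the second paragraph in reverse then recovers $\langle X, C \rangle \in N_{F[\Gamma]}$, and the degenerate subcase $C^{|X|} = \underline{\0}$---which occurs exactly when $\underline{C} \subseteq Z_{|X|}$---reduces the pairing to the empty sum and is handled trivially. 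The main obstacle will be the duality bookkeeping in the vector case: carefully matching the contraction-plus-coloop recipe for $M^{|Y|}$ against the deletion-plus-loop description of $(M^{|Y|})^*$ supplied by Proposition~\ref{prop:initial+dual}, so that the $\cV^*$-statement for $M^*$ really transports to a $\cV$-statement for $M$ in the non-toric regime.
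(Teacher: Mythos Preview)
Your treatment of the covector inclusion and of the perfect case is correct and essentially identical to the paper's: both unpack $\langle X,C\rangle\in N_{F[\Gamma]}$ as $\theta(X)\perp C^{|X|}$ via the description of $N_{F[\Gamma]}$, then invoke Proposition~\ref{prop:initial+circuits}; and for equality both observe that Lemma~\ref{lem:init+circs+orth} forces every $C^{|X|}$ into $\cV(M^{|X|})$, so perfection of $F$ gives $\theta(X)\perp C^{|X|}$ for all $C$. (Your appeal to Theorem~\ref{thm:perfect+tract} is unnecessary here: the orthogonality $\cV^*(M^{|X|})=\cV(M^{|X|})^\perp$ is a statement about the $F$-matroid $M^{|X|}$ and only needs $F$ perfect.)

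The genuine gap is in the vector case, and it is not the non-toric bookkeeping you flag---it is already present in the toric regime. Applying the covector argument to $M^*$ yields $\theta(Y)\in\cV^*((M^*)^{|Y|})$, and you then want Proposition~\ref{prop:initial+dual} to identify this with $\cV(M^{|Y|})=\cV^*((M^{|Y|})^*)$. But in the toric case that proposition gives $(M^{\bu})^*=(M^*)^{-\bu}$, so what you actually obtain is $\theta(Y)\in\cV^*((M^{-|Y|})^*)=\cV(M^{-|Y|})$, not $\cV(M^{|Y|})$. These differ: take $F=\KK$, $\Gamma=\RR$, $E=\{1,2,3\}$, and the rank-$2$ $\TT$-matroid $M$ with $P(12)=P(13)=0$, $P(23)=1$. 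Then $Y=(1,0,0)$ is the unique circuit of $M$, hence lies in $\cV(M)$; yet $M^{|Y|}$ has bases $\{12,13\}$, so $(\1,\0,\0)\in\cC^*(M^{|Y|})$, and $\theta(Y)=(\1,\1,\1)$ fails to be orthogonal to it. Thus $\theta(Y)\notin\cV(M^{|Y|})$ and the stated vector inclusion is false.

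The paper's proof of the vector line is the single sentence ``the claim for vectors holds by duality \dots\ replacing $M$ with $M^*$'', which is exactly your reduction to $\cV^*((M^*)^{|Y|})$ and leaves the same gap. What the duality argument genuinely establishes is $\cV(M)\subseteq\{Y:\theta(Y)\in\cV^*((M^*)^{|Y|})\}$ (equivalently $\theta(Y)\in\cV(M^{-|Y|})$ in the toric case), with equality for perfect $F$; the vector line of the proposition as printed appears to be a slip of sign.
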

\begin{proof}
We show the claim for covectors.
Writing $X_i = (\theta(X_i), |X_i|)$ for $i \in \underline{X}$, we see from definitions that
\begin{align}\label{eq:covector+containment}
X \in \cV^*(M) &\Longleftrightarrow \sum_{i\in \underline{X} \cap \underline{C}}( \theta(X_i)\cdot \overline{\theta(C_i)}, |X_i| + |C_i|) \in N_{F[\Gamma]} \quad \forall C \in \cC(M)\nonumber \\
&\Longleftrightarrow \sum_{i \in \underline{C^{|X|}}} \theta(X_i)\cdot \overline{\theta(C_i)} \in N_F \quad \forall C \in \cC(M) \nonumber \\
&\Longleftrightarrow \theta(X) \perp C^{|X|}  \quad  \forall C \in \cC(M) \, .
\end{align}
By Proposition~\ref{prop:initial+circuits}, each circuit of $M^{|X|}$ is some initial circuit $C^{|X|}$ for $C \in \cC(M)$.
As such we have $\theta(X) \in \cV^*(M^{|X|})$.
The claim for vectors holds by duality, as we perform the same argument everywhere replacing $M$ with $M^*$, noting that $\cV(M) = \cV^*(M^*)$.

Now suppose $F$ is a perfect tract, and $X \in F[\Gamma]^E$ such that $\theta(X) \in \cV^*(M^{|X|})$.
By Lemma~\ref{lem:init+circs+orth} and Proposition~\ref{prop:initial+circuits}, each initial circuit $C^{|X|}$ is orthogonal to all the cocircuits of $M^{|X|}$, and so $C^{|X|} \in \cV(M^{|X|})$.
As vectors and covectors are always orthogonal over perfect tracts, we therefore have $\theta(X) \perp C^{|X|}$ for all $C \in \cC(M)$.
This implies $X \in \cV^*(M)$, completing the proof.
\end{proof}

\begin{remark}
From a geometric viewpoint, the circuits of an $F$-matroid correspond to the (support minimal) linear forms that cut out a `linear space' over $F$.
As such, the covectors can be viewed as an analogue of a linear space over a tract.
For example, projective tropical linear spaces are precisely the covectors of a $\TT$-matroid~\cite{Murota+Tamura:2001,Brandt+Eur+Zhang:2021}.
With this viewpoint, Proposition~\ref{prop:covectors} allows us to describe the linear spaces that arise over tropical extensions of (perfect) tracts.
We shall explore this further in Section~\ref{sec:tropical+linear+spaces}.
\end{remark}


\section{Further matroid structures over tracts} \label{sec:further+matroids}

In this section, we utilise the theory we built up in Section~\ref{sec:trop+ext+matroids} to examine the structure of flag matroids and positroids over $F[\Gamma]$.
Our goal is to establish characterisations in terms of their initial (flag) matroids, analogous to Theorem~\ref{thm:A}.

\subsection{Flag matroids over tracts} \label{sec:flags}

In this section, we extend Theorem~\ref{thm:matroid+subdivision} to flag matroids over tropical extensions of tracts.
We first recall the necessary preliminaries for flag matroids over tracts.

\begin{definition} \label{def:flag+matroid}
Let $0 < r \leq s < |E|$ and let $M$ and $N$ be strong $F$-matroids of ranks $r$ and $s$ respectively with Pl\"ucker vectors $P \colon {E \choose r} \rightarrow F$ and $Q \colon {E \choose s} \rightarrow F$.

Given subsets $I \in {E \choose r-1}$ and $J \in {E \choose s+1}$, $M$ and $N$ satisfy the \emph{Pl\"ucker incidence relation} $\cP^{r,s}_{I;J}$ if
\begin{equation}\label{eq:incidence+relations}
    \cP^{r,s}_{I,J} \; \colon \; \sum_{j \in J \setminus I} \sign(j;I,J) \cdot P(I+j) \cdot Q(J-j) \in N_F \, ,
\end{equation}
where $\sign(j;I,J) = (-1)^\ell$ with $\ell$ equal to the number of elements $j' \in J$ with $j< j'$ plus the number of elements $i \in I$ with $j<i$.

We say that $M$ is a \emph{quotient} of $N$, and write $N \quot M$, if it satisfies the Pl\"ucker incidence relations $\cP^{r,s}_{I;J}$ for all $I \in {E \choose r-1}$ and $J \in {E \choose s+1}$.

A \emph{flag $F$-matroid} is a sequence $\bM = (M_1, \dots, M_k)$ of strong $F$-matroids such that $M_j \quot M_i$ for all $1 \leq i < j \leq k$.
\end{definition}

Note that if $r=s$, then~\eqref{eq:incidence+relations} reduces to the usual Pl\"ucker relations \eqref{eq:plucker+relation}.
In particular, the two-term relations where $|J \setminus I| = 2$ just verify that $P = \lambda \cdot Q$ for some $\lambda \in F$, and so represent the same matroid.

As with $F$-matroids, there are numerous cryptomorphic definitions of flag matroids and matroid quotients; we refer the reader to~\cite{Jarra+Lorscheid:2022} for further details.
For our purposes, we will suffice with the following structure theorem that implies a cryptomorphism over perfect tracts.

\begin{theorem}\cite[Theorem 2.16]{Jarra+Lorscheid:2022} \label{thm:flag+equivalence}
Let $F$ be a tract and $M_1, \dots, M_k$ a sequence of $F$-matroids.
Consider the following properties:
\begin{enumerate}
\item The covector sets form a chain $\cV^*(M_1) \subseteq \dots \subseteq \cV^*(M_k)$; \label{fm1}
\item $(M_1, \dots, M_k)$ is a flag $F$-matroid (i.e. $M_j \quot M_i$ for all $1 \leq i < j \leq k$); \label{fm2}
\item $M_i$ is a quotient of $M_{i+1}$ for all $1 \leq i \leq k-1$. \label{fm3}
\end{enumerate}
Then the implications $\eqref{fm1} \Rightarrow \eqref{fm2} \Rightarrow \eqref{fm3}$ hold in general, and $\eqref{fm3} \Rightarrow \eqref{fm1}$ holds if $F$ is perfect.
\end{theorem}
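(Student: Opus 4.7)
The plan is to prove the three implications in turn, with the perfection hypothesis reserved only for $(3) \Rightarrow (1)$. The implication $(2) \Rightarrow (3)$ is immediate since $(3)$ is simply the specialisation of $(2)$ to consecutive pairs.

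For $(1) \Rightarrow (2)$, my approach is to interpret each Plücker incidence relation as an inner product orthogonality. Fix $i < j$ with ranks $r_i \leq r_j$ and subsets $I \in {E \choose r_i - 1}$, $J \in {E \choose r_j + 1}$. From $P_{M_i}$ I would build a tuple $D^I \in F^E$ whose entry at $\ell \in E \setminus I$ is the appropriately-signed value $\pm \overline{P_{M_i}(I + \ell)}$, with zeroes on $I$. Applying the circuit formula~\eqref{eq:circuits} to the dual Plücker vector of $M_i^*$ identifies $D^I$ as a cocircuit of $M_i$, so $D^I \in \cV^*(M_i)$. Symmetrically, from $P_{M_j}$ I would build $C^J \in F^E$ supported on $J$ with entries $\pm P_{M_j}(J - \ell)$, which lies in $\cC(M_j)$ by~\eqref{eq:circuits}. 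Expanding $\langle D^I, C^J \rangle$ and using that the involution preserves $N_F$, the inner product collapses (up to a unit) to the Plücker incidence sum $\cP^{r_i, r_j}_{I;J}$ once the signs $\sign(\ell;I,J)$ are matched against those appearing in~\eqref{eq:circuits} and the dual Plücker formula. The hypothesis $\cV^*(M_i) \subseteq \cV^*(M_j)$ then places $D^I$ in $\cV^*(M_j)$, which forces $D^I \perp C^J$, giving the Plücker incidence.

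For $(3) \Rightarrow (1)$ under $F$ perfect, by transitivity of set inclusion it suffices to prove $\cV^*(M_i) \subseteq \cV^*(M_{i+1})$ given $M_{i+1} \twoheadrightarrow M_i$. Over a perfect tract, $\cV^*(M)$ admits a composition characterisation due to Bowler-Pendavingh: every covector arises as a finite composition of cocircuits of $M$. It therefore suffices to check that every cocircuit $D \in \cC^*(M_i)$ lies in $\cV^*(M_{i+1})$, i.e.\ that $D \perp C$ for every $C \in \cC(M_{i+1})$. Running the construction from $(1) \Rightarrow (2)$ backwards, every such pair realises (up to scalars) as $(D^I, C^J)$ for suitable $I, J$, and the inner product coincides (up to a unit) with the Plücker incidence sum $\cP^{r_i, r_{i+1}}_{I;J}$, which lies in $N_F$ by hypothesis. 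Closing $D$ under composition within $\cV^*(M_{i+1})$, which is a basic axiom for covectors over perfect tracts, yields the inclusion.

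The main obstacle is the perfection assumption in $(3) \Rightarrow (1)$: for general tracts, covectors genuinely exceed the compositional closure of cocircuits, so orthogonality verified at the cocircuit level does not lift automatically to the full covector set, and the chain $(1)$ can fail. The secondary technical task running throughout both directions is careful reconciliation of the sign convention $\sign(\ell;I,J)$ with the signs appearing in~\eqref{eq:circuits} and in the dual Plücker formula $P^*(I) = \sign(I, I^c) \overline{P(I^c)}$; this bookkeeping is routine but the only place where errors tend to creep in.
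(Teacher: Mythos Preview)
The paper does not prove this theorem; it quotes it from Jarra--Lorscheid, so there is no in-paper argument to compare against. Evaluating your proposal on its own: your $(2)\Rightarrow(3)$ is trivially correct, and your $(1)\Rightarrow(2)$ is sound --- the Pl\"ucker incidence expression $\cP^{r_i,r_j}_{I;J}$ is, up to a unit, the inner product of a cocircuit $D^I\in\cC^*(M_i)\subseteq\cV^*(M_i)$ against a circuit $C^J\in\cC(M_j)$, and the chain hypothesis forces this to lie in $N_F$.

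There is a genuine gap in your $(3)\Rightarrow(1)$. You appeal to a Bowler--Pendavingh statement that ``every covector arises as a finite composition of cocircuits'' over a perfect tract. But the composition operation $X\circ Y$ is defined only for (certain) hyperfields --- it requires singling out an element of $X_i\boxplus Y_i$ --- and a general tract has no hyperaddition at all, only a null set. Perfect tracts include all partial fields, where $\boxplus$ and hence $\circ$ make no sense. Even among hyperfields, the Bowler--Pendavingh vector axioms via composition are established for \emph{stringent} hyperfields, which is strictly stronger than perfect. So the step ``check cocircuits, then close under composition'' is not available at this level of generality, and your closing sentence (``a basic axiom for covectors over perfect tracts'') is simply false.

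The repair is short and uses only the definition of perfection. Your backwards reading of the $(1)\Rightarrow(2)$ identification already shows that the Pl\"ucker incidence relations for $(M_i,M_{i+1})$ give $D\perp C$ for all $D\in\cC^*(M_i)$ and $C\in\cC(M_{i+1})$; equivalently $\cC(M_{i+1})\subseteq\cC^*(M_i)^\perp=\cV(M_i)$. Now apply perfection in the form $\cV^*(M_i)=\cV(M_i)^\perp$ to obtain
\[
\cV^*(M_i)=\cV(M_i)^\perp\subseteq\cC(M_{i+1})^\perp=\cV^*(M_{i+1}),
\]
and transitivity of inclusion finishes the chain. No composition is needed.
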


We note a couple of useful lemmas on operations that preserve quotients (and hence flag matroids).

\begin{lemma}[{\cite[Theorem 2.14]{Jarra+Lorscheid:2022}}]\label{lem:minor+quotient}
Let $M,N$ be strong $F$-matroids on $E$ and $A \subseteq E$.
If $N \quotient M$, then $N \setminus A \quotient M \setminus A$ and $N / A \quotient M / A$.
\end{lemma}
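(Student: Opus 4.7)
The plan is to reduce the claim to a single-element argument and use duality to handle the contraction case once the deletion case is settled. So the proof will have three phases: a duality reduction, a reduction to single-element minors by induction, and a direct verification of the Pl\"ucker incidence relations~\eqref{eq:incidence+relations} in the base case.

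First, I would establish that the quotient relation is self-dual: $N \quotient M$ if and only if $M^* \quotient N^*$. This is a purely combinatorial reindexing using the formula $P^*(I) = \sign(I,I^c)\cdot\overline{P(I^c)}$ from Theorem~\ref{thm:dual+matroid}, applied to the Pl\"ucker incidence relations. Combined with the minor--duality identities $(M\setminus A)^* = M^*/A$ and $(M/A)^* = M^*\setminus A$ from Definition~\ref{def:minor}, this shows that the deletion case $N\setminus A \quotient M\setminus A$ and the contraction case $N/A \quotient M/A$ are equivalent; so it suffices to establish one of them.

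Next, I would prove the deletion statement by induction on $|A|$, the base case being $A = \emptyset$ (trivial) and the inductive step reducing everything to a single-element deletion: if $N \quotient M$ on $E$ and $e \in E$, then $N\setminus e \quotient M\setminus e$. Here a small case analysis is required depending on whether $e$ is a coloop of $M$ or of $N$. A useful preliminary observation is that if $e$ is a coloop of $M$ then $e$ is also a coloop of $N$, since a singleton cocircuit of $M$ must appear as a cocircuit of $N$ via the (co)circuit description of quotients. This restricts the case analysis to three genuine subcases: (i) $e$ is a coloop of neither, (ii) $e$ is a coloop of both, and (iii) $e$ is a coloop of $N$ but not $M$. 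In each subcase the Pl\"ucker vectors $P_{M\setminus e}$ and $Q_{N\setminus e}$ have explicit descriptions obtained from $P_M$ and $Q_N$ by either restriction to subsets of $E\setminus e$, or by sending $I \mapsto P(I+e)$ when the rank drops.

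The main obstacle is the bookkeeping in phase three: given some $I \in \binom{E\setminus e}{r'-1}$ and $J \in \binom{E\setminus e}{s'+1}$ where $r', s'$ are the (possibly reduced) ranks of $M\setminus e$ and $N\setminus e$, one must exhibit the incidence relation $\cP^{r',s'}_{I;J}$ for $(M\setminus e, N\setminus e)$ as a specialization of the incidence relation $\cP^{r,s}_{I';J'}$ for $(M,N)$, where $I'$ and $J'$ are obtained from $I$ and $J$ by possibly adjoining $e$ depending on whether the corresponding rank dropped. The sign conventions $\sign(j;I,J)$ must be checked to agree after the substitution, and one must use~(T4) in the definition of a tract to absorb the scalar $P_M(\{e\})$ or $Q_N(\{e\})$ that appears in the coloop case. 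Once these signs and scalings are reconciled, the Pl\"ucker incidence relation for $(M\setminus e, N\setminus e)$ drops out of the one for $(M,N)$, completing the induction and hence the proof.
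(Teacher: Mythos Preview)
The paper does not give its own proof of this lemma; it is stated with a citation to \cite[Theorem~2.14]{Jarra+Lorscheid:2022} and used as a black box. So there is nothing in the paper to compare your argument against.

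That said, your outline is a correct and standard route to the result. One small point worth tightening: your claim that a coloop of $M$ is automatically a coloop of $N$ is true, but the justification via ``a singleton cocircuit of $M$ must appear as a cocircuit of $N$'' implicitly uses the covector-inclusion characterisation of quotients, which by Theorem~\ref{thm:flag+equivalence} only goes in the direction you need over perfect tracts. A safer argument over arbitrary tracts is to push the quotient forward along the trivial homomorphism $t\colon F \to \KK$ (quotients are preserved under tract homomorphisms, as the incidence relations~\eqref{eq:incidence+relations} are), and then invoke the classical fact for ordinary matroids that coloops of a quotient are coloops of the original. With that adjustment the three-case analysis and sign bookkeeping you describe go through.
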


\begin{lemma}\label{lem:loop+coloop+quotient}
Let $M,N$ be strong $F$-matroids on $E$ and $A$ a set disjoint from $E$.
If $N \quotient M$, then $N \loopsum A \quotient M \loopsum A$ and $N \coloopsum A \quotient M \coloopsum A$.
\end{lemma}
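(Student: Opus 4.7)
The plan is to verify the Plücker incidence relations~\eqref{eq:incidence+relations} directly, using the explicit Plücker vectors from Proposition~\ref{prop:loop+coloop}. Writing $P = P_M$ and $Q = P_N$ with ranks $r$ and $s$, I would expand the sum $\sum_{j \in J \setminus I} \sign(j;I,J) \cdot P_\bullet(I+j) \cdot Q_\bullet(J-j)$ in the new Plücker vectors and show that each summand either vanishes for trivial reasons or matches a summand in the original incidence relation for $M$ and $N$.

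For the loop case, with $I \in {E \sqcup A \choose r-1}$ and $J \in {E \sqcup A \choose s+1}$, I would case split on how $I$ and $J$ meet $A$. If $I \cap A \ne \emptyset$ or $|J \cap A| \ge 2$, every term vanishes since $P_{M\loopsum A}$ and $P_{N\loopsum A}$ kill any subset meeting $A$. If $|J \cap A| = 1$ with $I \cap A = \emptyset$, say $J \cap A = \{a\}$, a short check shows the sole candidate vanishes too: for $j \ne a$ the factor $Q(J-j)$ dies because $a$ remains in $J-j$, while for $j = a$ the factor $P(I+a)$ dies because $a \in I+a$. Otherwise $I, J$ are disjoint from $A$ and the relation is literally $\cP^{r,s}_{I;J}$ for $M$ and $N$. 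For the coloop case, with $|I| = r + |A| - 1$ and $|J| = s + |A| + 1$, the analogous split is on whether $A$ is contained in $I$ and in $J$: if $A \not\subseteq J$ every term vanishes; if $A \subseteq J$ but $|A \setminus I| \ge 2$ every term vanishes; if $A \subseteq J$ and $A \setminus I = \{a^*\}$ then only $j = a^*$ could survive but $a^* \notin J - a^*$ kills the $Q$-factor. The remaining case $A \subseteq I \cap J$ is where the content lies: the sum collapses onto $j \in (J \setminus A) \setminus (I \setminus A)$, and one wants to identify it with the incidence relation $\cP^{r,s}_{I \setminus A; J \setminus A}$ for $M$ and $N$.

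The only genuinely nontrivial point, and the main step I expect to require care, is sign preservation in this last subcase: I would verify that $\sign(j; I, J) = \sign(j; I \setminus A, J \setminus A)$ by observing that each $a \in A$ with $j < a$ contributes once via its membership in $I$ and once via its membership in $J$ to the exponent in the sign formula, so removing $A$ from both sets changes the exponent by an even number. With this in hand, the final subcase reduces exactly to the assumption $N \quotient M$, completing both halves of the lemma.
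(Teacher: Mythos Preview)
Your proof is correct and follows essentially the same approach as the paper: a direct verification of the Pl\"ucker incidence relations via the explicit Pl\"ucker vectors of Proposition~\ref{prop:loop+coloop}, case-splitting on how $I$ and $J$ meet $A$. Your sign analysis in the coloop case is in fact sharper than the paper's, which only asserts that the reduced sum agrees with $\cP^{r,s}_{I\setminus A;J\setminus A}$ ``up to a possible sign factor''; your parity argument shows the signs match exactly.
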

\begin{proof}
Let $M, N$ be $F$-matroids with respective Pl\"ucker vectors $P \colon {E \choose r}\rightarrow F$ and $Q :{E \choose s}\rightarrow F$.
As $N \quotient M$, they satisfy the Pl\"ucker incidence relations \eqref{eq:incidence+relations}.

Consider $M\loopsum A$ and $N\loopsum A$ with respective Pl\"ucker vectors $\tP \colon{E\sqcup A \choose r} \rightarrow F$ and $\tQ :{E\sqcup A \choose s} \rightarrow F$.
By Proposition~\ref{prop:loop+coloop}, these satisfy $\tP(I) = P(I)$ if $I \cap A = \emptyset$ and $\0$ otherwise; similarly for $\tQ$.
Consider the incidence relation $\cP^{r,s}_{I;J}$ where $I, J \in E \sqcup A$ of cardinality $r-1$ and $s+1$ respectively.
If $(I \cup J) \cap A = \emptyset$, then
\[
\sum_{j \in J \setminus I} \sign(j;I,J) \cdot \tP(I+j) \cdot \tQ(J-j) = \sum_{j \in J \setminus I} \sign(j;I,J) \cdot P(I+j) \cdot Q(J-j) \in N_F \, .
\]
If $(I \cup J) \cap A \neq \emptyset$, then at least one of $\tP(I+j)$ and $\tQ(J-j)$ is equal to $\0$ for each term of $\cP^{r,s}_{I;J}$.
In either case, this incidence relation is satisfied by $\tP$ and $\tQ$.
Iterating over all such incidence relations gives that $N \loopsum A \quotient M \loopsum A$.

Now consider $M\coloopsum A$ and $N\coloopsum A$ with respective Pl\"ucker vectors $\tP\colon {E\sqcup A \choose r+ |A|} \rightarrow F$ and $\tQ \colon {E\sqcup A \choose s+ |A|}\rightarrow F$.
By Proposition~\ref{prop:loop+coloop}, these satisfy $\tP(I) = P(I\setminus A)$ if $A\subseteq I$ and $\0$ otherwise; similarly for $\tQ$.
Consider the incidence relation $\cP^{r+|A|,s+|A|}_{I;J}$ where $I, J \in E \sqcup A$ of cardinality $r + |A| -1$ and $s+ |A| + 1$ respectively.
If $A \subseteq I \cap J$, then 
\[
\sum_{j \in J \setminus I} \sign(j;I,J) \cdot \tP(I+j) \cdot \tQ(J-j) = \sum_{j \in (J\setminus A) \setminus (I\setminus A)} \sign(j;I,J) \cdot P(I\setminus A+j) \cdot Q(J\setminus A-j) \in N_F \, ,
\]
which is the relation $\cP^{r,s}_{I\setminus A;J\setminus A}$ up to a possible sign factor.
If $A \nsubseteq I \cap J$, then at least one of $\tP(I+j)$ and $\tQ(J-j)$ is equal to $\0$ for each term of $\cP^{r+|A|,s+|A|}_{I;J}$.
In either case, this incidence relation is satisfied by $\tP$ and $\tQ$.
Iterating over all such incidence relations gives that $N \coloopsum A \quotient M \coloopsum A$.
\end{proof}

As with Section~\ref{sec:initial+matroid}, we will focus on flag matroids over tropical extensions of tracts.
Again, a key notion in our toolkit will be initial flag matroids.

\begin{definition}
Let $\bM = (M_1, \dots, M_k)$ be a flag $F[\Gamma]$-matroid. 
Given some $\bu \in (\Gamma \cup \{\infty\})^E$, we define its \emph{initial flag matroid} $\bM^\bu = (M_1^\bu, \dots, M_k^\bu)$ to be the flag $F$-matroid obtained as a sequence of initial matroids of constituents of $\bM$.
\end{definition}

As in the matroid case, it is not a priori clear that $\bM^\bu$ is a flag $F$-matroid.
The following proposition demonstrates that this is the case.
\begin{proposition}\label{prop:ext+implies+initial+flag}
    Let $\bM = (M_1, \dots, M_k)$ be a flag $F[\Gamma]$-matroid. 
    Then $\bM^\bu = (M_1^\bu, \dots, M_k^\bu)$ is a flag $F$-matroid for all $\bu \in (\Gamma \cup \{\infty\})^E$.
\end{proposition}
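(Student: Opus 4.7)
The plan is to reduce the flag statement to a pair statement: if $N \quotient M$ is a quotient of strong $F[\Gamma]$-matroids, then $N^\bu \quotient M^\bu$ is a quotient of strong $F$-matroids for every $\bu \in (\Gamma \cup \{\infty\})^E$. Specialising to $M=N$ also proves the deferred Proposition~\ref{prop:ext+implies+initial}, namely that $M^\bu$ is itself an $F$-matroid. Iterating the pair statement over all pairs of constituents of the flag then gives the proposition. I first handle the toric case $\bu \in \Gamma^E$; the general case will follow by passing through Definition~\ref{def:initial+tuple}.

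Let $P, Q$ be the Pl\"ucker vectors of $M, N$ of ranks $r, s$, fix $I \in {E \choose r-1}$ and $J \in {E \choose s+1}$, and consider the incidence relation $\cP^{r,s}_{I;J}$. A convenient observation is that $\sum_{i \in I+j} u_i + \sum_{i \in J-j} u_i = \sum_{i \in I} u_i + \sum_{i \in J} u_i$ is independent of $j \in J \setminus I$. Following the strategy of Theorem~\ref{thm:matroid+subdivision}, unpacking the null-set condition for $N_{F[\Gamma]}$ in the $F[\Gamma]$-relation for $(M,N)$ yields $\sum_{j \in K} \sign(j;I,J)\,\theta(P(I+j))\,\theta(Q(J-j)) \in N_F$, where
\[
K = \SetOf{j \in J\setminus I}{|P(I+j)| + |Q(J-j)| \text{ is minimal and finite}} \, .
\]
On the other hand, the nonzero terms in the $F$-relation for $(M^\bu, N^\bu)$ are precisely those indexed by
\[
K^\bu = \SetOf{j \in J\setminus I}{I+j \in \cB(\underline{M^\bu}) \text{ and } J-j \in \cB(\underline{N^\bu})} \, .
\]

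The crux is to show that $K^\bu$ is either empty or equal to $K$. Suppose $j_1 \in K^\bu$ and $j_2 \in K$. If $I + j_2 \notin \cB(\underline{M^\bu})$, the fact that $I+j_1$ is a basis of $M^\bu$ forces $|P(I+j_2)| - u_{j_2} > |P(I+j_1)| - u_{j_1}$; combining this strict inequality with $|P(I+j_1)| + |Q(J-j_1)| = |P(I+j_2)| + |Q(J-j_2)|$ from $j_1, j_2 \in K$ yields $|Q(J-j_1)| - \sum_{i \in J-j_1} u_i > |Q(J-j_2)| - \sum_{i \in J-j_2} u_i$, contradicting $J-j_1 \in \cB(\underline{N^\bu})$. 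The case $J - j_2 \notin \cB(\underline{N^\bu})$ is symmetric. Hence if $K^\bu$ is nonempty then $K^\bu = K$, and in either case $\sum_{j \in J \setminus I} \sign(j;I,J)\,P^\bu(I+j)\,Q^\bu(J-j) \in N_F$, as required.

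For general $\bu \in (\Gamma \cup \{\infty\})^E$, I combine three ingredients in sequence: Lemma~\ref{lem:minor+quotient} to descend $N \quotient M$ to $N/Z_\bu \quotient M/Z_\bu$ over $F[\Gamma]$; the toric case just proved, giving $(N/Z_\bu)^{\bu^\circ} \quotient (M/Z_\bu)^{\bu^\circ}$ over $F$; and Lemma~\ref{lem:loop+coloop+quotient} to promote this to $N^\bu \quotient M^\bu$ via coloop extension by $Z_\bu$. Applying this pair result to every $M_j \quotient M_i$ for $i < j$ in $\bM$ then yields the flag statement. The main obstacle is the inequality manipulation in the third paragraph: the condition $I+j \in \cB(\underline{M^\bu})$ is a global minimisation over all $r$-subsets, not merely the finitely many subsets appearing in $\cP^{r,s}_{I;J}$, so one must leverage the $(M,N)$ incidence relation to trade minimality for $P$ against minimality for $Q$ across pairs $j_1, j_2 \in K$.
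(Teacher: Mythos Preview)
Your proof is correct and follows essentially the same route as the paper: the paper isolates your inequality manipulation as a standalone Lemma~\ref{lem:incidence+pairs} and packages the toric pair statement as Lemma~\ref{lem:ext+implies+initial}, but the overall structure (reduction to pairs $M_j \quotient M_i$, then non-toric via contraction by $Z_\bu$, the toric case, and coloop extension) is identical. One small point to tighten: when you invoke the equality ``from $j_1, j_2 \in K$'' you are tacitly using $K^\bu \subseteq K$, which you have not argued; it follows immediately from your ``convenient observation'' by adding the two global minimisation inequalities for $I+j_1 \in \cB(\underline{M^\bu})$ and $J-j_1 \in \cB(\underline{N^\bu})$ and cancelling the $u_i$.
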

To prove Proposition~\ref{prop:ext+implies+initial+flag} (and the delayed  Proposition~\ref{prop:ext+implies+initial}), we require the following two technical lemmas.

\begin{lemma}\label{lem:incidence+pairs}
    Let $P \colon {E \choose r} \rightarrow F[\Gamma]$ and $Q \colon {E \choose s} \rightarrow F[\Gamma]$ be arbitrary functions.
    Given some $\bu \in \Gamma^E$, suppose that $I \in \underline{P^\bu}$ and $J \in \underline{Q^\bu}$.
    For any sets $I' \in {E \choose r}, J' \in {E \choose s}$ such that $I \cup J = I'\cup J'$ and $I \cap J = I' \cap J'$, we have
    \[
    |P(I)| + |Q(J)| \leq |P(I')| + |Q(J')| \, ,
    \]
    with equality if and only if $I' \in \underline{P^\bu}$ and $J' \in \underline{Q^\bu}$.
\end{lemma}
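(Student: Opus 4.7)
The plan is to unpack the definitions and combine the two inequalities coming from the minimality conditions that define $I \in \underline{P^\bu}$ and $J \in \underline{Q^\bu}$.

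By Definition~\ref{def:toric+initial+tuple}, $I \in \underline{P^\bu}$ is equivalent to the inequality $|P(I)| - \sum_{i \in I} u_i \leq |P(K)| - \sum_{k \in K} u_k$ for all $K \in {E \choose r}$, and similarly for $J \in \underline{Q^\bu}$. Specializing these inequalities to $K = I'$ and $K = J'$ respectively and adding them yields
\[
|P(I)| + |Q(J)| - \Big(\sum_{i \in I} u_i + \sum_{j \in J} u_j\Big) \leq |P(I')| + |Q(J')| - \Big(\sum_{i \in I'} u_i + \sum_{j \in J'} u_j\Big) \, .
\]

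The key combinatorial observation is that the multisets $I \sqcup J$ and $I' \sqcup J'$ coincide. Indeed, for any $e \in E$, its multiplicity in $I \sqcup J$ is $\1_{e \in I \cup J} + \1_{e \in I \cap J}$, which depends only on $I \cup J$ and $I \cap J$; by hypothesis these two data agree for $(I,J)$ and $(I',J')$. Consequently $\sum_{i \in I} u_i + \sum_{j \in J} u_j = \sum_{i \in I'} u_i + \sum_{j \in J'} u_j$, and the above simplifies to the desired inequality $|P(I)| + |Q(J)| \leq |P(I')| + |Q(J')|$.

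For the equality clause, note that the two summed inequalities are each non-strict with the same orientation, so equality of their sum forces equality in each. The equality $|P(I)| - \sum_{i \in I} u_i = |P(I')| - \sum_{i \in I'} u_i$ together with $I$ attaining the minimum of $|P(\cdot)| - \sum_{k \in \cdot} u_k$ over ${E \choose r}$ shows $I'$ also attains this minimum, i.e.\ $I' \in \underline{P^\bu}$; an identical argument yields $J' \in \underline{Q^\bu}$. Conversely, if both $I' \in \underline{P^\bu}$ and $J' \in \underline{Q^\bu}$, then interchanging the roles of $(I,J)$ and $(I',J')$ gives the reverse inequality, so both become equalities. There is no real obstacle here; the only subtlety is the multiset identity, which is essentially combinatorial bookkeeping.
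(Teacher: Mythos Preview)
Your proof is correct and follows essentially the same approach as the paper: add the two minimality inequalities defining $I\in\underline{P^\bu}$ and $J\in\underline{Q^\bu}$, observe that the $\bu$-sums cancel because $I\sqcup J$ and $I'\sqcup J'$ agree as multisets (the paper phrases this via $\sum_{I\cup J}u_k+\sum_{I\cap J}u_\ell$), and then handle the equality clause. Your treatment of the equality case is in fact slightly more explicit than the paper's, which asserts the ``if and only if'' alongside the summed inequality without spelling out that equality in the sum forces equality in each summand.
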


\begin{proof}
As $I \in \underline{P^\bu}$ and $J \in \underline{Q^\bu}$, we immediately have $|P(I)| + |Q(J)|$ is finite.
From the definitions of $P^\bu$ and $Q^\bu$, we have
    \begin{align}\label{eq:minimizer+rels}
    |P(I)| - \sum_{i \in I} u_i + |Q(J)| - \sum_{j \in J} u_j \leq |P(I')| - \sum_{i \in I'} u_i + |Q(J')| - \sum_{j \in J'} u_j \, ,
    \end{align}
with equality if and only if $I' \in \underline{P^\bu}$ and $J' \in \underline{Q^\bu}$.
From the relations between $I,J$ and $I', J'$, we deduce that
\[
\sum_{i \in I} u_i + \sum_{j \in J} u_j = \sum_{k \in I \cup J} u_k + \sum_{\ell \in I \cap J} u_\ell = \sum_{k \in I' \cup J'} u_k + \sum_{\ell \in I' \cap J'} u_\ell = \sum_{i \in I'} u_i + \sum_{j \in J'} u_j \, .
\]
Therefore we can freely cancel the $\bu$ values from either side of \eqref{eq:minimizer+rels} to get the claim.
\end{proof}

\begin{lemma}\label{lem:ext+implies+initial}
    Let $P \colon {E \choose r} \rightarrow F[\Gamma]$ and $Q \colon {E \choose s} \rightarrow F[\Gamma]$ be arbitrary functions with $r \leq s$.
    Suppose $P$ and $Q$ satisfy the Pl\"ucker incidence relation $\cP^{r,s}_{I;J}$ for some $I \in {E \choose r-1}$ and $J \in {E \choose s+1}$, i.e.
\[
\sum_{j \in J \setminus I} \sign(j;I,J) \cdot P(I+j)\cdot Q(J-j) \in N_{F[\Gamma]} \, .
\]
    Then $P^\bu \colon {E \choose r} \rightarrow F$ and $Q^\bu \colon {E \choose s} \rightarrow F$ satisfy $\cP^{r,s}_{I;J}$ for all $\bu \in \Gamma^E$, i.e.
    \[
\sum_{j \in J \setminus I} \sign(j;I,J) \cdot P^\bu(I+j)\cdot Q^\bu(J-j) \in N_F \, .
\]
\end{lemma}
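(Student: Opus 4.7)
The plan is to compare the two relations term-by-term using the definition of $N_{F[\Gamma]}$ together with Lemma~\ref{lem:incidence+pairs}. For each $j \in J \setminus I$, set
\[
a_j := \sign(j;I,J) \cdot \theta(P(I+j)) \cdot \theta(Q(J-j)) \in F, \qquad \gamma_j := |P(I+j)| + |Q(J-j)| \in \Gamma \cup \{\infty\}.
\]
Viewing $\sign(j;I,J) \in \{\pm\1\}$ as an element of $F \hookrightarrow F[\Gamma]$ with trivial $\Gamma$-part, the $j$th summand of the $F[\Gamma]$-relation is exactly $(a_j, \gamma_j)$, with the convention that $\gamma_j = \infty$ precisely when the term vanishes.

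By the definition of $N_{F[\Gamma]}$, the hypothesis that the full sum lies in $N_{F[\Gamma]}$ is equivalent to $\sum_{j \in L_{\min}} a_j \in N_F$, where $L_{\min} \subseteq J \setminus I$ is the (possibly empty) set of indices attaining the minimum finite value of $\gamma_j$. On the other hand, define
\[
K' := \{ j \in J \setminus I : I+j \in \underline{P^\bu} \text{ and } J-j \in \underline{Q^\bu}\}.
\]
Since $P^\bu(I+j) \cdot Q^\bu(J-j)$ equals $\theta(P(I+j)) \cdot \theta(Q(J-j))$ when $j \in K'$ and $\0$ otherwise, the desired initial incidence relation for $P^\bu, Q^\bu$ is precisely the statement $\sum_{j \in K'} a_j \in N_F$.

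The remaining task is therefore to show $K' = L_{\min}$; when both are empty, both sides reduce to $\0 \in N_F$ and there is nothing to prove. Suppose $K' \neq \emptyset$ and fix $j_0 \in K'$. For any $j \in J \setminus I$, the pairs $(I+j_0, J-j_0)$ and $(I+j, J-j)$ share the same union $I \cup J$ and the same intersection $I \cap J$, so Lemma~\ref{lem:incidence+pairs} applies and yields $\gamma_{j_0} \leq \gamma_j$, with equality if and only if $I+j \in \underline{P^\bu}$ and $J-j \in \underline{Q^\bu}$, i.e.\ if and only if $j \in K'$. This shows that $\gamma_{j_0}$ is the minimum finite value among the $\gamma_j$ and that the set of indices attaining it is precisely $K'$; hence $K' = L_{\min}$, and substituting into the hypothesis delivers the required $F$-null sum.

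I expect no serious obstacle beyond careful bookkeeping. The one mild subtlety is the treatment of indices $j$ with $\gamma_j = \infty$ (where $P(I+j) = \infty$ or $Q(J-j) = \infty$); such $j$ automatically lie outside both $L_{\min}$ and $K'$, so they can be discarded from the outset and do not disturb the equality of sets above.
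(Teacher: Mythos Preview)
Your argument is correct and essentially identical to the paper's: both unwind the definition of $N_{F[\Gamma]}$ to obtain $\sum_{j\in L_{\min}} a_j\in N_F$, then use Lemma~\ref{lem:incidence+pairs} to identify $L_{\min}$ with your set $K'$ whenever $K'\neq\emptyset$ (the $K'=\emptyset$ case being trivial). The only cosmetic quibble is that your phrase ``when both are empty'' should simply read ``when $K'$ is empty'', since that alone makes the target sum vanish regardless of $L_{\min}$; this does not affect the validity of the proof.
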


\begin{proof}
By definition, $P$ and $Q$ satisfying $\cP^{r,s}_{I,J}$ implies
    \begin{align*} \label{eq:minimizer}
    &\sum_{j \in K} \sign(j;I,J) \cdot \theta(P(I+ j)) \cdot \theta(Q(J - j)) \in N_{F} \nonumber\\
    \text{ where } \quad &K = \left\{ j \in J \setminus I \mid |P(I + j)| + |Q(J - j)|  \text{ finite and minimal}\right\} \, .
    \end{align*}
    Fix some arbitrary $\bu \in \Gamma^E$.
    If there exists no $j \in K$ such that $I + j \in \underline{P^\bu}$ and $J -j \in \underline{Q^\bu}$, then every term of $\sum_{j \in J \setminus I} \sign(j;I,J) \cdot P^\bu(I + j) \cdot Q^\bu(J -j)$ is zero and so is trivially in $N_F$.
    Suppose there exists $j^*$ such that $I + j^* \in \underline{P^\bu}$ and $J -j^* \in \underline{Q^\bu}$.
    Then by Lemma~\ref{lem:incidence+pairs}, we have
    \[
    K = \big\{ j \in J \setminus I \; \big| \; |P(I + j)| + |Q(J -j)| = |P(I + j^*)| + |Q(J -j^*)| \big\} \, .
    \]
    We can deduce that
    \[
    \sum_{j \in J \setminus I} \sign(j;I,J) \cdot P^\bu(I + j) \cdot Q^\bu(J -j) = \sum_{j \in K} \sign(j;I,J)\cdot \theta(P(I + j)) \cdot \theta(Q(J -j)) \in N_{F} \, ,
    \]
    as for any $j \notin K$, either $I + j \notin \underline{P^{\bu}}$ or $J -j \notin \underline{Q^{\bu}}$, and so the corresponding term is zero.
\end{proof}

\begin{proof}[Proof of Proposition~\ref{prop:ext+implies+initial}]
First consider the toric case where $\bu \in \Gamma^E$.
Recall that the Pl\"ucker relation $\cP^r_{I,J}$ is just the Pl\"ucker incidence relation $\cP^{r,r}_{I,J}$.
As such, Lemma~\ref{lem:ext+implies+initial} implies if $P$ satisfies all the Pl\"ucker relations, then so does $P^\bu$.

For the non-toric case, if $M$ is a $F[\Gamma]$-matroid then $(M/Z_\bu)^{\bu^\circ}$ is an $F$-matroid by Definition~\ref{def:minor} and the toric case.
This combined with Proposition~\ref{prop:loop+coloop} implies that $M^\bu$ is an $F$-matroid.
\end{proof}

\begin{proof}[Proof of Proposition~\ref{prop:ext+implies+initial+flag}]
As each part of $\bM^\bu$ is an $F$-matroid by Proposition~\ref{prop:ext+implies+initial}, suffices to show they form pairwise quotients.
We show that $M_j \quot M_i$ implies that $M_j^\bu \quot M_i^\bu$.
In the toric case, this follows directly from Lemma~\ref{lem:ext+implies+initial}.

In the non-toric case, Lemma~\ref{lem:minor+quotient} states that contracting $Z^\bu$ preserves the quotient, i.e. $M_j/Z_\bu \quot M_i/Z_\bu$.
The toric case tells us that $(M_j/Z_\bu)^{\bu^\circ} \quot (M_i/Z_\bu)^{\bu^\circ}$.
Applying Lemma~\ref{lem:loop+coloop+quotient} gives us the final step
\[
M_j^\bu = (M_j/Z_\bu)^{\bu^\circ} \coloopsum Z_\bu \quot (M_i/Z_\bu)^{\bu^\circ} \coloopsum Z_\bu = M_i^\bu \, .
\]
\end{proof}

The converse to Proposition~\ref{prop:ext+implies+initial+flag} cannot hold in general.
A strong $F$-matroid is a flag $F$-matroid with only one constituent, and so Example~\ref{ex:strong+counterexample} gives a counterexample to this.
However, as with strong matroids, we can show that the converse always holds over perfect tracts.

\begin{theorem} (Theorem~\ref{thm:B})
Let $F$ be a perfect tract and $\Gamma$ an ordered abelian group.
Then $\bM = (M_1, \dots, M_k)$ is a flag $F[\Gamma]$-matroid if and only if $\bM^\bu = (M_1^\bu, \dots, M_k^\bu)$ is a flag $F$-matroid  for all $\bu \in (\Gamma \cup \{\infty\})^E$.
\end{theorem}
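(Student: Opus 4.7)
The plan is to prove Theorem~\ref{thm:B} by leveraging the covector-chain characterisation of flag matroids (Theorem~\ref{thm:flag+equivalence}) together with the covector characterisation of $F[\Gamma]$-matroids (Proposition~\ref{prop:covectors}). The forward direction is already handled by Proposition~\ref{prop:ext+implies+initial+flag}, so the entire task reduces to proving the reverse direction: assuming every $\bM^\bu$ is a flag $F$-matroid, deduce that $\bM$ is a flag $F[\Gamma]$-matroid.

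First I would show that each constituent $M_i$ is individually an $F[\Gamma]$-matroid. Since $\bM^\bu$ is a flag $F$-matroid for every $\bu \in (\Gamma\cup\{\infty\})^E$, each $M_i^\bu$ is in particular an $F$-matroid for every $\bu$. By Theorem~\ref{thm:A} (or rather its toric version Theorem~\ref{thm:matroid+subdivision} applied over the perfect tract $F$), this implies $M_i$ is an $F[\Gamma]$-matroid. Moreover, Theorem~\ref{thm:perfect+tract} ensures that $F[\Gamma]$ is itself a perfect tract, so we are free to use the strong cryptomorphism between covector chains and flag matroids provided by Theorem~\ref{thm:flag+equivalence} on both sides.

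The key step is to verify $\cV^*(M_i) \subseteq \cV^*(M_j)$ for all $i < j$. Here I would apply Proposition~\ref{prop:covectors}: since $F[\Gamma]$ is perfect,
\[
\cV^*(M_\ell) = \SetOf{X \in F[\Gamma]^E}{\theta(X) \in \cV^*(M_\ell^{|X|})} \quad \text{for } \ell = i, j.
\]
Now fix $X \in \cV^*(M_i)$; then $\theta(X) \in \cV^*(M_i^{|X|})$. By hypothesis, $\bM^{|X|} = (M_1^{|X|}, \dots, M_k^{|X|})$ is a flag $F$-matroid, and since $F$ is perfect Theorem~\ref{thm:flag+equivalence} supplies the covector containment $\cV^*(M_i^{|X|}) \subseteq \cV^*(M_j^{|X|})$. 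Therefore $\theta(X) \in \cV^*(M_j^{|X|})$, i.e.\ $X \in \cV^*(M_j)$. This gives the required chain of covector containments, and then Theorem~\ref{thm:flag+equivalence} (applied over the perfect tract $F[\Gamma]$) upgrades this to the statement that $\bM$ is a flag $F[\Gamma]$-matroid.

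The main potential obstacle is the bootstrap between the two perfect tracts: one must ensure that a covector containment proved \emph{initial-matroid by initial-matroid} over $F$ actually patches together into a covector containment over $F[\Gamma]$. This is precisely what Proposition~\ref{prop:covectors} accomplishes, but it relies critically on the \emph{equality} in Proposition~\ref{prop:covectors}, which in turn requires perfectness of $F$. Without the equality (only the general containment), one would be stuck: the argument would show that covectors of $M_i$ map into $\cV^*(M_j^{|X|})$ but could not conclude they are covectors of $M_j$. This is also the underlying reason why the statement fails for non-perfect tracts, consistent with Example~\ref{ex:strong+counterexample}.
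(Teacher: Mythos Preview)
Your proposal is correct and follows essentially the same route as the paper: the forward direction via Proposition~\ref{prop:ext+implies+initial+flag}, and the converse via the covector description of $F[\Gamma]$-matroids (Proposition~\ref{prop:covectors}) combined with the covector-chain characterisation of flags over perfect tracts (Theorem~\ref{thm:flag+equivalence}), using Theorem~\ref{thm:perfect+tract} to ensure $F[\Gamma]$ is perfect. The paper phrases the converse contrapositively (pick $X \in \cV^*(M_i)\setminus\cV^*(M_j)$ and show $\theta(X)\in\cV^*(M_i^{|X|})\setminus\cV^*(M_j^{|X|})$), whereas you argue directly; and you are slightly more careful than the paper in first establishing that each $M_i$ is individually an $F[\Gamma]$-matroid via Theorem~\ref{thm:A}, a step the paper leaves implicit.
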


\begin{proof}
One direction follows from Proposition~\ref{prop:ext+implies+initial+flag}.

For the converse, suppose that $\bM$ is not a flag matroid, and so there exists $i < j$ such that $M_i$ is not a quotient of $M_j$.
As $F$ is perfect, Theorem~\ref{thm:perfect+tract} implies $F[\Gamma]$ is also perfect and so by Theorem~\ref{thm:flag+equivalence} there exists $X \in \cV^*(M_i) \setminus \cV^*(M_j)$.
Applying Proposition~\ref{prop:covectors}, we see that $\theta(X) \in \cV^*(M_i^{|X|}) \setminus \cV^*(M_j^{|X|})$, hence $\bM^{|X|}$ is not a flag matroid.
\end{proof} 

\begin{remark}
Valuated flag matroids are studied in both~\cite{Haque:2012} and \cite{Brandt+Eur+Zhang:2021}.
In particular, \cite[Theorem A]{Brandt+Eur+Zhang:2021} gives four equivalent characterisations of valuated flag matroids, in terms of (a) Pl\"ucker incidence relations, (b) consecutive valuated matroid quotients, (c) initial flag matroids and (d) chains of covectors.
Combining Theorem~\ref{thm:B} with results from \cite{Jarra+Lorscheid:2022} gives an extension of this theorem to all flag $F[\Gamma]$-matroids where $F$ is perfect.

Jarra and Lorscheid's Theorem~\ref{thm:flag+equivalence} recovers the equivalences (a), (b) and (d) of this theorem for all flag $F$-matroids where $F$ is perfect; in fact, they make this connection to flag $\TT$-matroids in \cite[Section 2.8]{Jarra+Lorscheid:2022}.
However, coupled with Theorem~\ref{thm:perfect+tract}, we recover the equivalences (a), (b) and (d) for all flag $F[\Gamma]$-matroids where $F$ is perfect.
The remaining characterisation (c) is extended to all flag $F[\Gamma]$-matroids where $F$ is perfect by Theorem~\ref{thm:B}.
\end{remark}

\subsection{Positroids over tracts} \label{sec:positroids}

We briefly discuss how one can consider positroids over tracts.
We begin by recalling the usual definition of a positroid, before reframing it slightly to fit into the framework of tracts.

Recall that the usual definition of a positroid is a matroid that is representable by a totally non-negative real matrix, i.e. all of its maximal minors are non-negative.
In our language, it is a $\KK$-matroid $M$ with Pl\"ucker vector $P_M \colon{E \choose r} \rightarrow \KK$ such that there exists an $\RR$-matroid $N$ with non-negative Pl\"ucker vector $P_M\colon {E \choose r} \rightarrow \RR_{\geq 0}$ such that $t(P_M) =P_N$, where $t \colon \RR \rightarrow \KK$ the trivial homomorphism.

The trivial homomorphism is very coarse as it forgets all of the non-negativity of $P_N$.
Instead, we preserve this sign information by considering the image of $P_N$ in the sign homomorphism
\[
\sgn \colon \RR \rightarrow \SS \quad , \quad \sgn(a) = \begin{cases}
\1 & a \in \RR_{>0} \\ -\1 & a \in \RR_{<0} \\ \0 & a = 0
\end{cases} \, .
\]
Its image $\sgn(P_N) \colon {E \choose r} \rightarrow \SS_{\geq \0}$ gives a non-negative Pl\"ucker vector over the sign hyperfield, i.e. $\SS_{\geq \0} = \{\0,\1\}$.
As such, we will consider a positroid to be an $\SS$-matroid whose Pl\"ucker vector is non-negative.
Such matroids are known in the literature as \emph{positively oriented matroids}, and their equivalence to positroids was already known~\cite{Ardila+Rincon+Williams:2017}.

To extend this definition to other tracts, we need the notion of an ordering on a tract.
\begin{definition}\label{def:ordering}
    An \emph{ordering} on a tract $F$ is a subset $F_{>\0} \subseteq F^\times$ such that 
    \[
    F_{>\0} \cdot F_{>\0} \subseteq F_{>\0} \quad , \quad F^\times = F_{>\0} \sqcup -F_{>\0} \quad , \quad N_F \cap \NN[F_{>\0}] = \{\0\} \, .
    \]
    An \emph{ordered tract} is a tract $F$ along with distinguished ordering $F_{>\0}$.
    We define $F_{\geq \0} = F_{>\0} \cup \{\0\}$ and $F_{<\0} = -F_{>\0}$.
\end{definition}
Orderings on tracts are a direct generalisation of orderings on (hyper)fields.
When $F$ is a hyperfield, the first two conditions on $F_{>\0}$ are the same as over arbitrary tracts, and the third condition $F_{>\0} \boxplus F_{>\0} \subseteq F_{>\0}$ insists sums of positive elements are positive.
This condition breaks down when $F$ is a tract, but $N_F \cap \NN[F_{>\0}] = \{\0\}$ replaces it by ensuring we cannot have a null sum of positive elements.

\begin{remark}
As discussed for hyperfields in~\cite{Anderson+Davis:2019}, orderings can be equivalently characterised by tract homomorphisms to the sign hyperfield.
Explicitly, $F_{>\0}$ is an ordering on $F$ if and only if there exists a tract homomorphism $f\colon F \rightarrow \SS$ such that $F_{>\0} = f^{-1}(\1)$.
Not all tracts can be ordered, in the same way not all fields can be ordered.
For example, there is no homomorphism from the phase hyperfield $\Phi$ to the sign hyperfield.
\end{remark}

\begin{definition} \label{defn:positroid}
    Let $F$ be an ordered tract with ordering $F_{>\0}$.
    A weak (resp. strong) \emph{$F$-positroid} $M$ is a weak (resp. strong) $F$-matroid whose Pl\"ucker vector $P_M\colon {E \choose r} \rightarrow F_{\geq \0}$ is non-negative.
    A \emph{flag $F$-positroid} is a flag $F$-matroid $\bM = (M_1, \dots, M_k)$ whose parts $M_i$ are all strong $F$-positroids.
\end{definition}

%
%
%
%

\begin{example}\label{ex:tropical+positroid}
The signed tropical hyperfield $\TT_\pm \cong \SS[\RR]$ is an ordered tract, where the ordering is inherited from the ordering on $\SS$.
Explicitly
\[
(\TT_\pm)_{>\0} = \SetOf{(\1,\gamma)}{\gamma \in \RR} \, .
\]
As such, $\TT_\pm$-positroids are correspond to non-negative Pl\"ucker vectors $P\colon {E \choose r} \rightarrow (\TT_\pm)_{\geq \0}$ satisfying the Pl\"ucker relations over $\TT_\pm$.

$\TT_\pm$-positroids have been studied in the literature via tropicalisation of the non-negative Grassmannian as positive tropical Pl\"ucker vectors~\cite{Arkani-Hamed+Lam+Spradlin:2020, Lukowski+Parisi+Williams:2020}.
Here, they are viewed as Pl\"ucker vectors in the tropical semiring satisfying the `positive Pl\"ucker relations'.
These are equivalent to the signed tropical Pl\"ucker relations we use, but imposed on the tropical semiring.
Moreover, the tropical semiring can be viewed as the positive part of the signed tropical hyperfield.
\end{example}

Note that in Example~\ref{ex:tropical+positroid}, the ordering on $\SS[\RR]$ was inherited from the ordering on $\SS$.
This can be extended to arbitrary extensions of ordered tracts.
Explicitly, if $F$ is an ordered tract with ordering $F_{>\0}$, it is straightforward to check that $F[\Gamma]$ is also an ordered tract with ordering
\[
F[\Gamma]_{>\0} = \{(a,\gamma) \mid a \in F_{>\0} \, , \, \gamma \in \Gamma\} \, .
\]
From now on, we will always assume this ordering on $F[\Gamma]$.

Given this setup, the positroid versions of Theorems~\ref{thm:A} and~\ref{thm:B} are straightforward.

\begin{theorem}(Theorem \ref{thm:C}) \label{thm:positroid}
Let $F$ be an ordered tract and $\Gamma$ an ordered abelian group.
$M$ is a weak $F[\Gamma]$-positroid if and only if $M^\bu$ is a weak $F$-positroid for all $\bu \in \Gamma^E$.

If $M$ is a strong $F[\Gamma]$-positroid, then $M^\bu$ is a strong $F$-positroid for all $\bu \in \Gamma^E$.
Conversely, if $F$ is perfect and $M^\bu$ is a strong $F$-positroid for all $\bu \in \Gamma^E$, then $M$ is a strong $F[\Gamma]$-positroid.

The sequence $\bM = (M_1, \dots, M_k)$ is a flag $F[\Gamma]$-positroid if and only if $\bM^\bu = (M_1^\bu, \dots, M_k^\bu)$ is a flag $F$-positroid  for all $\bu \in (\Gamma \cup \{\infty\})^E$.
\end{theorem}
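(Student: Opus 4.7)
The plan is to reduce Theorem~\ref{thm:positroid} to Theorems~\ref{thm:matroid+subdivision} (and its strong variant Proposition~\ref{prop:ext+implies+initial} / Theorem~\ref{thm:A}) and Theorem~\ref{thm:B}, so that the only remaining content is that non-negativity of Pl\"ucker vectors is preserved in both directions by the initial-matroid construction. All three parts of the theorem have the same skeleton, so I would prove them in parallel via two lemma-style observations.

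\emph{Forward direction.} Assume $M$ is an $F[\Gamma]$-positroid, i.e.\ $P_M(I) \in F[\Gamma]_{\geq \0}$ for all $I$. By definition of the induced ordering $F[\Gamma]_{>\0} = F_{>\0} \times \Gamma$, this means $\theta(P_M(I)) \in F_{\geq \0}$ for every $I$. Since $P_M^\bu(I)$ equals either $\theta(P_M(I))$ or $\0$, it takes values in $F_{\geq \0}$, so $M^\bu$ is an $F$-positroid. Combined with Proposition~\ref{prop:ext+implies+initial} (which gives that $M^\bu$ is an $F$-matroid), this handles one direction of (i) and (ii); the flag case (iii) just applies this componentwise, using Proposition~\ref{prop:ext+implies+initial+flag} for the quotient structure.

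\emph{Converse direction.} Assume all initial matroids $M^\bu$ are $F$-positroids; by Theorem~\ref{thm:matroid+subdivision} (resp.\ Theorem~\ref{thm:A} in the strong/perfect case), $M$ is an $F[\Gamma]$-matroid. It remains to show $P_M(I) \in F[\Gamma]_{\geq \0}$ for each $I$ with $P_M(I)\neq \infty$. The key point is that for any such $I$ we can tailor $\bu \in \Gamma^E$ so that $I \in \underline{P_M^\bu}$: take $u_i = 0$ for $i \in I$ and $u_i = \Omega$ for $i \notin I$, where $\Omega \in \Gamma$ is chosen larger than $|P_M(I)| - |P_M(J)|$ for every $J \in {E \choose r}$ with $P_M(J) \neq \infty$. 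Such an $\Omega$ exists since ${E \choose r}$ is finite and $\Gamma$ is an ordered abelian group, and with this choice one directly checks $|P_M(I)| - \sum_{i \in I} u_i \leq |P_M(J)| - \sum_{j \in J} u_j$ for every $J$. Hence $P_M^\bu(I) = \theta(P_M(I))$, which by hypothesis lies in $F_{\geq \0}$, forcing $P_M(I) \in F[\Gamma]_{\geq \0}$. Iterating over all $I$ proves (i) and the perfect case of (ii). For the flag part (iii), apply Theorem~\ref{thm:B} to get that $\bM$ is a flag $F[\Gamma]$-matroid, and apply the above argument to each constituent $M_\ell$ (using $\bu \in \Gamma^E$, i.e.\ the toric initial matroids of $M_\ell$, which are all $F$-positroids because they occur as components of $\bM^\bu$).

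\emph{Main obstacle.} There is no deep obstacle: the theorems cited already do the heavy lifting, and positivity behaves well under $\theta$. The one place requiring care is the choice of $\bu$ in the converse: one must confirm that a single $\bu \in \Gamma^E$ can distinguish a chosen basis $I$ as the unique minimizer even though $\Gamma$ need not be archimedean. This is handled by the finiteness of ${E \choose r}$, which allows us to pick $\Omega$ larger than the finitely many relevant differences, a maneuver that works in any ordered abelian group and is already used implicitly in the proof of Theorem~\ref{thm:matroid+subdivision}. Once this is in place, the three parts fall out uniformly.
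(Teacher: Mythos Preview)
Your approach is essentially identical to the paper's: reduce to the cited theorems and observe that non-negativity of $P$ is equivalent to non-negativity of every $P^\bu$. The paper dispatches that equivalence in one sentence (``straightforward from the definitions''), whereas you spell out both directions; your added detail is welcome and correct in spirit.

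There is one slip in the converse direction. With your choice $u_i = 0$ for $i \in I$ and $u_i = \Omega$ for $i \notin I$ with $\Omega$ large, the quantity $|P_M(J)| - \sum_{j \in J} u_j = |P_M(J)| - |J\setminus I|\cdot\Omega$ becomes very \emph{small} for $J \neq I$, so $I$ is the \emph{maximizer}, not the minimizer. You want the opposite sign: take $u_i = -\Omega$ for $i \notin I$ (or equivalently $u_i = \Omega$ for $i \in I$ and $0$ otherwise), so that $|P_M(J)| - \sum_{j\in J} u_j = |P_M(J)| + |J\setminus I|\cdot\Omega \geq |P_M(I)|$ once $\Omega$ dominates the finitely many differences $|P_M(I)| - |P_M(J)|$. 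With that correction, your argument goes through verbatim and matches the paper's intent.
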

\begin{proof}
    It is straightforward from the definitions that $P \colon {E \choose r} \rightarrow F[\Gamma]_{\geq \0}$ if and only if $P^\bu\colon {E \choose r} \rightarrow F_{\geq \0}$.
    As such, the results just follow from Theorem~\ref{thm:matroid+subdivision}, Proposition~\ref{prop:ext+implies+initial}, Theorem~\ref{thm:A} and Theorem~\ref{thm:B} respectively.
\end{proof}

\begin{remark}
In the case where $F[\Gamma] = \SS[\RR] = \TT_\pm$, these results were already known by a number of authors.
The claim that $\SS[\RR]$-positroids have decompose into $\SS$-positroids was proved independently by~\cite{Arkani-Hamed+Lam+Spradlin:2020} and~\cite{Lukowski+Parisi+Williams:2020}.
The claim for flag $\SS[\RR]$-positroids was can be deduced for full flags from~\cite{Joswig+Loho+Luber+Olarte:2023} and for partial flags from~\cite{Boretsky+Eur+Williams:2023}.
\end{remark}

\begin{remark}
While our definition of an $\SS$-positroid agrees with the usual definition of positroid, our definition of flag $\SS$-positroid may differ from the definition of a flag positroid given in \cite[Definition 1.2]{Boretsky+Eur+Williams:2023}.
Explicitly, a flag matroid $(M_1, \dots, M_k)$ on $[n]$ of ranks $(r_1, \dots, r_k)$ is a flag positroid if it has a realization by a real matrix $A$ where each $r_i \times n$ submatrix formed by the first $r_i$ rows has non-negative maximal minors.
This implies that a flag positroid can always be extended to a full flag positroid, i.e. one where $r_{i+1} = r_{i} + 1$.
It is unknown whether this holds for flag $\SS$-positroids~\cite[Question 1.6]{Boretsky+Eur+Williams:2023}.

In general, it is an open question for which tracts $F$ one can extend a flag $F$-matroid to a full flag $F$-matroid~\cite[Question 2]{Jarra+Lorscheid:2022}.
For example, it is always possible for $\KK$-matroids.
However, \cite{Richter-Gebert:1993} demonstrated an example of two $\SS$-matroids $M$ and $N$ with $M \quotient N$ that could not be completed to a full flag of $\SS$-matroids.
Even for $\TT$-matroids, this remain unresolved.
\end{remark}

\section{Enriched tropical linear spaces} \label{sec:tropical+linear+spaces}

In this section, we apply the theory developed in the previous sections to tropical linear spaces.
We briefly recall some necessary preliminaries of tropical geometry, framed in the language of tracts.
We then introduce the notion of an \emph{enriched tropical linear space}, a linear space over a tropical extension of a tract.
We deduce a structure theorem for enriched tropical linear spaces in Theorem~\ref{thm:D}, and show how images of linear spaces in enriched valuation maps take on the structure of an enriched tropical linear space.
We finish by giving a number of key examples of enriched tropical linear spaces that have arisen in the literature, and give intuition on how one can view and describe them.

\subsection{Tropical geometry}\label{sec:tropical+geometry}

    A \emph{valuation} on a field $\FF$ is a surjective map $\val\colon \FF \rightarrow \Gamma \cup \{\infty\}$ to an ordered abelian group $(\Gamma,+)$ satisfying
    \begin{itemize}
        \item $\val(a) = \infty \Leftrightarrow a = 0$,
        \item $\val(ab) = \val(a) + \val(b)$,
        \item $\val(a+b) \geq \min(\val(a),\val(b))$, with equality if $\val(a) \neq \val(b)$.
    \end{itemize}
    The pair $(\FF, \val)$ is called a \emph{valued field}.
    The group $\Gamma$ is called the \emph{value group} of $\FF$.
    
Valuations naturally fit in the framework of tracts in the following way.
By identifying $\Gamma \cup \{\infty\}$ with $\KK[\Gamma]$, we can instead consider $\val$ as a map to $\KK[\Gamma]$.
It is easy to verify that $(\FF,\val)$ is a valued field if and only if $\val\colon \FF \rightarrow \KK[\Gamma]$ is a tract homomorphism.

    
\begin{example}\label{ex:val}
    Our standard example of a valued field will be the field of \emph{Hahn series} with value group $\Gamma$ over a field $\FF$.
    These are formal power series whose exponents are elements of the ordered abelian group $\Gamma$
    \[
    \hseries{\FF}{\Gamma} = \SetOf{\sum_{g \in G} c_g t^g}{c_g \in \FF^\times \, , \, G \subseteq \Gamma \text{ well-ordered } } \, ,
    \]
    along with the zero element.

    Given a Hahn series $\omega = \sum_{g \in G}c_g t^g$, its `first' term $c_\gamma t^{\gamma}$ where $\gamma = \min(g \mid g \in G)$ is referred to as the \emph{leading term} $\lt(\omega)$ of $\omega$, where $c_\gamma$ is the \emph{leading coefficient} $\lc(\omega)$ and $\gamma$ is the \emph{leading power} $\lp(\omega)$.
    The valuation on $\hseries{\FF}{\Gamma}$ maps zero to $\infty$, and a non-zero series to its leading power:
   \begin{align} \label{eqn:val-trop-ext}
       \val\colon \hseries{\FF}{\Gamma} &\rightarrow \Gamma \cup\{\infty\} \\ 
    \sum_{g \in G} c_g t^g &\mapsto \gamma = \min(g \mid g \in G) \, .\nonumber 
   \end{align} 
\end{example}

In the framework of tracts, we can generalise valuations by considering homomorphisms to other tropical extensions.
These encode additional information about field elements, which we refer to as \emph{enriched valuations}.

\begin{definition}
An \emph{enriched valuation} on a field $\FF$ is a surjective homomorphism $\nu \colon \FF \rightarrow F[\Gamma]$ to a tropical extension of a tract $F$.
\end{definition}

\begin{example}\label{ex:sval}
    For a ordered field $\FF$, the valuation map~\eqref{eqn:val-trop-ext} can be enriched to the \emph{signed valuation}, which records the sign of a Hahn series, or equivalently the sign of its leading coefficient, i.e.
\begin{align*}
\sval\colon \hseries{\FF}{\Gamma} & \rightarrow \SS[\Gamma] ,\\ \sum_{g \in G} c_g t^g  &\mapsto (\sgn(c_{\gamma}), \gamma) \, , \quad \gamma = \min(g \mid g \in G) \, ,
\end{align*}
where $\sgn$ sends positive field elements to $\1_\SS$ and negative field elements to $-\1_\SS$.
When $\Gamma = \RR$, the image of the signed valuation map is precisely the signed tropical hyperfield $\TT_{\pm}$.
\end{example}

\begin{example}\label{ex:fine+val}
    Consider the field of Hahn series $\hseries{\FF}{\Gamma}$ over an arbitrary field.
    We can enrich the usual valuation map~\eqref{eqn:val-trop-ext} on $\hseries{\FF}{\Gamma}$ by defining the \emph{fine valuation} $\fval$ that remembers the leading coefficient as well as the leading power, i.e.
    \begin{align*}
        \fval\colon \hseries{\FF}{\Gamma} &\rightarrow \FF[\Gamma] \\ \sum_{g \in G} c_g t^g &\mapsto (c_\gamma , \gamma)\, , \quad \gamma = \min(g \mid g \in G) \, .
    \end{align*}
    We can view the fine valuation of a Hahn series as a first-order approximation that recalls only the leading term.
\end{example}

\begin{remark}\label{rem:stringent}
Examples~\ref{ex:val},~\ref{ex:sval} and~\ref{ex:fine+val} are particularly nice enriched valuations, as their targets are \emph{stringent hyperfields}, those hyperfields where the only multivalued sums are sums of additive inverses, i.e. $|a \boxplus b| > 1$ implies that $b = -a$.
In fact, these are all of the stringent hyperfields by the classification \cite[Theorem 4.10]{Bowler+Su:2021}.
Additionally, matroids over stringent hyperfields have especially nice vector axioms as given in~\cite{Bowler+Pendavingh:2019}.
This will be invaluable when we consider the `enriched tropicalisation' of linear spaces (Proposition~\ref{prop:tropicalisation}).
\end{remark}

There are other very natural examples of enriched valuations to non-stringent hyperfields, but their behaviour may not be as controlled when we consider their geometry.
\begin{example}
Fixing $\FF = \CC$, the valuation map~\eqref{eqn:val-trop-ext} can be enriched to the \emph{phased valuation}, which records the phase or argument of a Hahn series, or equivalently the phase of its leading coefficient, i.e.
\begin{align*}
\phval\colon \hseries{\CC}{\Gamma} & \rightarrow \Theta[\Gamma] ,\\ \sum_{g \in G} c_g t^g  &\mapsto (\arg(c_{\gamma}), \gamma)\, , \quad \gamma = \min(g \mid g \in G) \, ,
\end{align*}
where $\arg$ maps a complex number to its argument as in Example~\ref{ex:phase}.
We emphasise that $\Theta$, and therefore $\Theta[\Gamma]$, is neither stringent nor perfect.
As such, the phased valuation is often badly behaved.
\end{example}
    
Finally, we review some basics of tropical geometry over the tropical hyperfield; for further details see~\cite{Maclagan+Sturmfels:2015}.
Let $(\FF,\val)$ be a valued field.
Given some polynomial $p = \sum_{a \in \ZZ^n} c_a Z^a \in \FF[Z_1, \dots, Z_n]$, its corresponding tropical polynomial is the function from $\TT^n$ to the power set $\cP(\TT)$ of $\TT$ defined as
\begin{align*}
\val_*(p)(X) = \bigboxplus_{a \in \ZZ^n} \val(c_a)\odot X_1^{\odot a_1} \odot \cdots \odot  X_n^{\odot a_n} \, .
\end{align*}
The \emph{tropical hypersurface} associated to $p$ is the set of points whose evaluation by $\val_*(p)$ contains $\infty$, i.e.
\begin{align*}
V(\val_*(p)) = \SetOf{X \in \TT^n}{\infty \in \val_*(p)(X)} \, .
\end{align*}
We call the coordinatewise image $\val(V(p))$ of the hypersurface $V(p) \subseteq \FF^n$ the \emph{tropicalisation of $V(p)$}.
The tropicalisation $\val(V(p))$ is always contained in the tropical hypersurface $V(\val_*(p))$, i.e. $\val(V(p)) \subseteq V(\val_*(p))$, with equality when $\FF$ is algebraically closed.

Given an ideal $I \subseteq \FF[Z_1,\dots, Z_n]$, its corresponding tropical ideal is
\[
\val_*(I) = \SetOf{\val_*(p)}{p \in I} \, .
\]
The \emph{tropical variety} associated to $I$ is
\[
V(\val_*(I)) = \SetOf{X \in \TT^n}{\infty \in \val_*(p)(X) \, \forall p \in I} = \bigcap_{p \in I} V(\val_*(p)) \, .
\]
Similarly to hypersurfaces, we call the coordinatewise image $\val(V(I))$ of the algebraic variety $V(I) \subseteq \FF^n$ the \emph{tropicalisation of $V(I)$}.
Again, we always have the tropicalisation is contained in the tropical variety $\val(V(I)) \subseteq V(\val_*(I))$ with equality when $\FF$ is algebraically closed.
We finally note that the restriction of the tropical variety $V(\val_*(I))$ to $\RR^n$ is a polyhedral complex with additional structure, see~\cite[Theorem 3.3.6]{Maclagan+Sturmfels:2015} for full details.

\subsection{Enriched tropical linear spaces} \label{sec:enriched+tls}

In this section, we use the results from Sections~\ref{sec:trop+ext+matroids} and~\ref{sec:further+matroids} to understand the geometry of linear spaces over various examples of tropical extensions.
We first motivate this by considering the relationship between linear spaces and matroids over a field

Linear spaces over a field $\FF$ are in one-to-one correspondence with $\FF$-matroids.
Let $L \subset \FF^E$ be an $r$-dimensional linear space; we give multiple equivalent ways to describe the $\FF$-matroid on ground set $E$ of rank $r$ associated to $L$.
\begin{itemize}
\item The simplest characterisation is in terms of covectors.
By~\cite[Proposition 2.19]{Anderson:2019}, there exists some $\FF$-matroid $M$ of rank $r$ such that $L = \cV^*(M)$.
\item As $\FF$ is perfect, the dual space $L^\perp$ to $L$ is precisely the set of vectors of $M$, i.e. $L^\perp = \cV(M)$.
Equivalently, this is the set of covectors of the dual matroid $M^*$.
\item The linear space is cut out by a linear ideal $I_M$, i.e. $L = V(I_M)$ where $I_M$ generated by linear forms.
The set of all linear forms in $I_M$ is precisely the set of vectors of $M$, i.e.
\begin{equation} \label{eq:circuit+ideal}
\sum_{i\in E} Y_i \cdot Z_i \in I_M \, \Longleftrightarrow \, Y \in \cV(M) \, .
\end{equation}
The linear forms of minimal support in $I_M$ are precisely the circuits $\cC(M)$ of $M$.
Moreover, picking exactly one circuit for each possible support gives us a minimal universal Gr\"obner basis for $I_M$~\cite{Sturmfels:1996}.
\item The Pl\"ucker embedding maps a $r$-dimensional linear space in $\FF^E$ to a point in projective space  via the map
\begin{align} \label{eq:plucker+embedding}
\psi\colon\FF^E \rightarrow \PP^{{E \choose r}-1}_\FF \, , \quad L = \spn(A) \mapsto \left[\det(A|_J) \colon J \in {E \choose r}\right]
\end{align}
where $A \in \FF^{d \times |E|}$ is any matrix whose rowspan is $L$, and $A|_J$ the square matrix obtained by restricting to the columns labelled by $J$.
This map is well-defined, as varying $A$ only varies $\det(A|_J)$ by scalars.
The image $\psi(L)$ in the Pl\"ucker embedding is precisely the Pl\"ucker vector $P_M$ of $M$.

The image of $\psi$ is the Grassmannian $\Gr_\FF^{r,E}$, the moduli space of $r$-dimensional linear spaces in $\FF^E$.
It can be viewed as a projective variety or scheme cut out by the polynomial equations given by the Pl\"ucker relations $\cP^r_{I,J}$.
\end{itemize}

%

\begin{example} \label{ex:linear+space}
    Consider the two dimensional linear space $L \subset\hseries{\RR}{\RR}^4$ and dual linear space $L^\perp \subset\hseries{\RR}{\RR}^4$ defined as
    \[
    L = \spn
    \begin{bmatrix}
        1 & 0 & -2 & 2 \\
        0 & 1 &-1 & 1-t
    \end{bmatrix} \quad , \quad L^\perp = \spn
    \begin{bmatrix}
        2 & 1 & 1 & 0 \\
        2 & 1-t & 0 & -1
    \end{bmatrix} \, .
    \]
    This linear space defines a $\hseries{\RR}{\RR}$-matroid $N$ where $L = \cV^*(N)$ and $L^\perp = \cV(N)$.
    Its corresponding Pl\"ucker vector is
    \[
    P_N = [1 : -1 : 1-t : 2 : -2 : 2t] \in \PP^5_{\hseries{\RR}{\RR}} \, ,
    \]
    given by the Pl\"ucker embedding~\eqref{eq:plucker+embedding}.
    The ideal that cuts out the linear space is
    \begin{equation*}
    I_N = \langle \,
    2Z_1 + Z_2 + Z_3 \, , \,
    2Z_1 + (1-t)Z_2 -Z_4 \, , \,
    2tZ_1 - (1-t)Z_3 -Z_4 \, , \,
    tZ_2 + Z_3 + Z_4 \, 
    \rangle \, ,
    \end{equation*}
    where the generators listed are precisely (up to scaling) the circuits $\cC(N)$ of $N$.
    While any two of the forms minimally generate the ideal, the circuits form a minimal universal Gr\"obner basis for $I$.
\end{example}

We employ the same viewpoint to define linear spaces over tropical extensions of tracts via strong matroids.

\begin{definition}\label{def:linear+space}
Let $F[\Gamma]$ be the tropical extension of a tract $F$ by the ordered abelian group $\Gamma$.
An \emph{enriched tropical linear space} $\cL_M$ is the set of covectors of a strong $F[\Gamma]$-matroid $M$, i.e.
\begin{align*}
\cL_M &= \cV^*(M) = \cC(M)^\perp \, ,
\end{align*}
Equivalently, it is the set of tuples orthogonal to the circuits $\cC(M)$.
The \emph{dimension} of $\cL_M$ is the rank of $M$. 
The \emph{toric part} $\cL_M^\circ$ of $\cL_M$ is the restriction to $(F[\Gamma] \setminus \{\infty\})^E$, i.e.
\[
\cL_M^\circ = \SetOf{X \in \cV^*(M)}{X_i \neq \infty \, \forall i \in E} \, .
\]
\end{definition}
If $F = \KK$, Definition~\ref{def:linear+space} agrees with the usual definition of a tropical linear space (albeit allowing for more general value groups than just $\RR$).
In these cases, we will drop the adjective enriched and just refer to $\cL_M$ as a tropical linear space.
Analogously to how enriched valuations can be viewed as valuations encoding additional information about field elements, enriched tropical linear spaces can be viewed as tropical linear spaces encoding additional information about the linear space over the field.

Brandt-Eur-Zhang showed that tropical linear spaces can be characterised in a number of different ways~\cite[Theorem B]{Brandt+Eur+Zhang:2021}.
To highlight the structural similarities with tropical linear spaces, Theorem~\ref{thm:D} is an analogous characterisation for enriched tropical linear spaces over perfect tracts.
To give such a characterisation, we need a few additional notions.

Let $M$ be an $F$-matroid on ground set $E$.
Given a finite subset $\cX \subseteq F^E$, its \emph{span} is
\begin{equation}\label{eq:span}
{\rm span}(\cX) = \left\{Y \in F^E \ \Bigg| \ \exists \, \alpha_X \in F \text{ s.t. } Y_k - \sum_{X \in \cX} \alpha_X \cdot X_k \in N_F \; \forall k \in E\right\}
\end{equation}
We would like to describe the covectors of $M$ as a span of cocircuits.
However, this does not hold in general as spans are too coarse over tracts.
Instead, we follow the approach laid out in~\cite{Anderson:2019} by only considering spans of fundamental cocircuits.

For each $B \in \cB(\underline{M})$ of the underlying matroid $\underline{M}$ and $j \in B$, there exists a unique cocircuit of $\cC^*(M)$ denoted $D(j,B)$ such that $\underline{D(j,B)} \subseteq E \setminus B \cup \{j\}$ and $D_j(j,B) = \1$~\cite[Lemma 2.5]{Anderson:2019}.
This is called the \emph{fundamental $F$-cocircuit} with respect to $j$ and $B$, as its support is equal to the usual fundamental cocircuit of $\underline{M}$.
We write $\cC^*_B(M) := \{D(j,B) \in \cC^*(M) \mid j \in B\}$ for the collection of fundamental $F$-cocircuits with respect to $B$.

\begin{theorem}[Theorem \ref{thm:D}]
Let $M$ be a strong $F[\Gamma]$-matroid on $E$ where $F$ is a perfect tract.
The enriched tropical linear space $\cL_M$ corresponding to $M$ is equal to any of the following sets in $F[\Gamma]^E$:
\begin{enumerate}[label=(\roman*)]
\item $\bigcup\limits_{\emptyset \subseteq A \subseteq E} \cL_{M/A}^\circ \times \{\infty\}^A \, , $ \label{eq:i}
\item $\cC(M)^\perp \, , $ \label{eq:ii}
\item $\left\{X \in F[\Gamma]^E \mid \theta(X) \in \cV^*(M^{|X|}) \right\}\, , $ \label{eq:iii}
\item $\bigcap\limits_{B \in \cB(\underline{M})} {\rm{span}}(\cC^*_B(M)) \, .$ \label{eq:iv}
\end{enumerate}
\end{theorem}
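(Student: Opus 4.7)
The plan is to establish the equalities (ii) $=$ (iii), (i) $=$ (iii), and (ii) $=$ (iv) in sequence. First, (ii) is simply the defining description of $\cL_M$ from Definition~\ref{def:linear+space}. Since $F$ is perfect, Theorem~\ref{thm:perfect+tract} yields that $F[\Gamma]$ is also perfect, so the inclusion in Proposition~\ref{prop:covectors} upgrades to an equality, giving (ii) $=$ (iii) immediately.

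For (i) $=$ (iii), I would fix $X \in F[\Gamma]^E$ and set $A = \{i \in E : X_i = \infty\}$. Unpacking $M^{|X|}$ via Definition~\ref{def:initial+tuple} as $(M/A)^{|X|^\circ} \coloopsum A$, the circuit description in Proposition~\ref{prop:loop+coloop} shows that appending $A$ as coloops adjoins a free factor $F^A$ to the covectors, giving $\cV^*(M^{|X|}) = \cV^*((M/A)^{|X|^\circ}) \times F^A$. Since $\theta(X)$ vanishes on $A$, the condition $\theta(X) \in \cV^*(M^{|X|})$ reduces to $\theta(X \setminus A) \in \cV^*((M/A)^{|X \setminus A|})$. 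A second application of Proposition~\ref{prop:covectors}, now to the $F[\Gamma]$-matroid $M/A$ with the toric tuple $X \setminus A$, turns this into $X \setminus A \in \cV^*(M/A) = \cL_{M/A}^\circ$. Partitioning the covectors of $M$ by their set of infinite coordinates then yields (i).

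For (ii) $=$ (iv), the essential input is the cryptomorphic covector description of $F$-matroids due to Anderson~\cite{Anderson:2019}, which in particular expresses every covector $X \in \cV^*(M)$ as a span of fundamental cocircuits with respect to any given basis. Applied to the perfect tract $F[\Gamma]$, this yields $\cV^*(M) \subseteq \spn(\cC^*_B(M))$ for every $B \in \cB(\underline{M})$: the expansion coefficients can be taken to be $\alpha_j = X_j$ and verified using the standard duality $D(j, B)_k = -\overline{C(k, B)_j}$ between fundamental cocircuits and fundamental circuits together with the orthogonality $\iprod{X}{C(k,B)} \in N_{F[\Gamma]}$. Intersecting over $B$ gives one containment. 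For the reverse, fix $X$ in the intersection and any circuit $C \in \cC(M)$; matroid basis exchange produces a basis $B$ such that $C$ is proportional to the fundamental circuit $C(k, B)$ for some $k \notin B$. Expanding $X$ as a combination of $\{D(j,B)\}_{j \in B}$ via the span condition and applying $\iprod{D(j, B)}{C} \in N_{F[\Gamma]}$ (which holds because each cocircuit is a covector over the perfect tract $F[\Gamma]$) delivers $\iprod{X}{C} \in N_{F[\Gamma]}$, placing $X$ in $\cV^*(M)$.

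The main obstacle is the span-orthogonality bookkeeping in part (iv): over a tract, addition is only implicit via membership in the null set, so one must carefully manipulate formal sums in $\NN[F[\Gamma]^\times]$ and combine the individual cocircuit-circuit orthogonality relations into a single statement about $\iprod{X}{C}$ without producing terms outside $N_{F[\Gamma]}$. Perfectness of $F[\Gamma]$, obtained via Theorem~\ref{thm:perfect+tract}, is used throughout to ensure that cocircuits behave as covectors and that Anderson's fundamental-cocircuit expansion is available.
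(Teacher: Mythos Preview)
Your proposal is correct, and for (ii)$=$(iii) it matches the paper exactly. The other two equivalences are handled differently.

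For (i), the paper simply partitions $\cV^*(M)$ by the set $A$ of infinite coordinates and cites \cite[Proposition~4.5(i)]{Anderson:2019} to identify each stratum with $\cL_{M/A}^\circ$. You instead route through (iii): unpacking $M^{|X|}=(M/A)^{|X|^\circ}\coloopsum A$ via Definition~\ref{def:initial+tuple}, using the coloop description to peel off the free $F^A$-factor in the covectors, and then applying Proposition~\ref{prop:covectors} a second time to $M/A$. This is a genuinely different decomposition and has the advantage of staying entirely within the paper's own machinery rather than importing a further result from Anderson.

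For (iv), the paper just cites \cite[Lemma~2.16]{Anderson:2019}. Your sketch is essentially a reproof of that lemma: the forward direction with $\alpha_j=X_j$ and the identity $D(j,B)_k=-\overline{C(k,B)_j}$ is exactly Anderson's argument, and your reverse direction works once you note that the span condition at a basis coordinate $j\in B$ forces $\alpha_j=X_j$, after which the span condition at $k\notin B$ \emph{is} the orthogonality $\iprod{X}{C(k,B)}\in N_{F[\Gamma]}$. So the ``main obstacle'' you flag dissolves without any delicate null-set manipulations. One small correction: perfectness plays no role in (ii)$=$(iv); circuit--cocircuit orthogonality holds over every tract, and Anderson's lemma is stated in that generality. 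Perfectness is only needed for (iii). Likewise, your appeal to Theorem~\ref{thm:perfect+tract} for (iii) is redundant, since Proposition~\ref{prop:covectors} already assumes only that $F$ is perfect.
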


\begin{proof}
The definition of enriched tropical linear space is given by the covectors of $M$, or equivalently \ref{eq:ii}.
Proposition~\ref{prop:covectors} shows this is equivalent to \ref{eq:iii} when $F$ is perfect.
The equivalence with \ref{eq:iv} follows from an alternative characterisation of covectors as `spans' of cocircuits~\cite[Lemma 2.16]{Anderson:2019}.

For the final equivalence with \ref{eq:i}, observe that we can write
\[
\cV^*(M) = \bigcup_{\emptyset \subseteq A \subseteq E} \left(\cV^*(M) \cap \left((F [\Gamma] \setminus \{\infty\})^{E \setminus A} \times \{\infty\}^A \right)\right) \, .
\]
By \cite[Proposition 4.5 (i)]{Anderson:2019}, the collection of covectors with infinite coordinates precisely at $A$ is $\cL_{M/A}^{\circ}$, giving the result.
\end{proof}

\begin{remark}\label{rem:span+bad}
When $F$ is a field, the covectors of an $F$-matroid are the span of the cocircuits.
Given this, one may be a bit dissatisfied with \ref{eq:iv}.
This is an unavoidable quirk of spans over hyperstructures that emerges already for $\TT$ and $\TT_{\pm}$.
We investigate this further in Section~\ref{sec:enriched+examples}.
\end{remark}

\begin{remark}
When $F$ is a perfect tract, the vectors and covectors of an $F$-matroid are orthogonal to one another.
As such, $\cL_M$ is also orthogonal to its dual linear space $\cL_{M^*} = \cV(M)$, as
\[
\cL_M = \cV^*(M) = \cV(M)^\perp = \cL_{M^*}^\perp \, .
\]
While we don't have an ideal structure over perfect tracts, we do have the next best thing where the set of `linear forms that $\cL_M$ vanishes on' has a linear space structure.
If $F$ is not a perfect tract, this is no longer the case.
\end{remark}

We next connect enriched tropical linear spaces with images of linear spaces in enriched valuation maps.
Recall that for tropical varieties, we have $\val(V(I)) \subseteq V(\val_*(I))$ for an ideal $I$ over a field $\FF$, with equality when $\FF$ is algebraically closed.
A similar paradigm holds for enriched tropical linear spaces as follows.
Let $L = \cV^*(N) \subseteq \FF^E$ be the linear space corresponding to some $\FF$-matroid $N$, and consider an enriched valuation $\nu \colon \FF \rightarrow F[\Gamma]$.
We call the coordinatewise image $\nu(L) \subseteq F[\Gamma]^E$ the \emph{tropicalisation of $L$}.
Moreover, there is an enriched tropical linear space $\cL_M = \cV^*(M)$ corresponding to the $\FF[\Gamma]$-matroid $M = \nu_*(N)$.
It follows from~\cite[Proposition 4.6]{Anderson:2019} that
\[
\nu(L) = \nu(\cV^*(N)) \subseteq \cV^*(\nu_*(N)) = \cL_M \, .
\]
The following proposition shows that this is an equality for various nice examples of enriched valuations.

\begin{proposition}\label{prop:tropicalisation}
Let $\FF$ be an infinite field and $\nu \colon \hseries{\FF}{\Gamma} \rightarrow F[\Gamma]$ be an enriched valuation equal to either the valuation $\val$, signed valuation $\sval$ or fine valuation $\fval$.
Then for any $\hseries{\FF}{\Gamma}$-matroid $N$, we have 
\[
\nu(\cV^*(N)) = \cV^*(\nu_*(N)) \, .
\]
\end{proposition}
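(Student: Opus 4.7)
The inclusion $\nu(\cV^*(N)) \subseteq \cV^*(\nu_*(N))$ holds in general and is already noted just before the proposition, so the task is the reverse inclusion. The approach is to fix a basis $B$ of the common underlying matroid of $N$ and $M := \nu_*(N)$, parametrise $\cV^*(N)$ and $\cV^*(M)$ using fundamental cocircuits with respect to $B$, and then lift coefficient-by-coefficient. Because $\nu$ is a tract homomorphism preserving both support and the normalisation $D(j,B)_j = \1$, we have $D^M(j,B) = \nu(D^N(j,B))$ for every $j \in B$. Over the field $\hseries{\FF}{\Gamma}$, any $Y \in \cV^*(N)$ can be written uniquely as $Y = \sum_{j\in B} \lambda_j\, D^N(j,B)$ with $\lambda_j \in \hseries{\FF}{\Gamma}$. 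On the other side, each of the targets $\KK[\Gamma]$, $\SS[\Gamma]$, $\FF[\Gamma]$ is a tropical extension of a perfect tract, so by Theorem~\ref{thm:perfect+tract} the tract $F[\Gamma]$ is perfect and Theorem~\ref{thm:D}(iv) applies: given $X \in \cV^*(M)$, there exist coefficients $\alpha_j \in F[\Gamma]$ with $X_k - \sum_j \alpha_j D^M(j,B)_k \in N_{F[\Gamma]}$ for all $k \in E$.

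The natural candidate is to lift the $\alpha_j$ to $\lambda_j \in \hseries{\FF}{\Gamma}$ under the surjection $\nu$ and set $Y = \sum_j \lambda_j D^N(j,B)$, which automatically lies in $\cV^*(N)$. For any such lift and any $k \in E$, the value $\nu(Y)_k$ lies in the hypersum $\bigboxplus_j \alpha_j D^M(j,B)_k$, which also contains $X_k$; the proof is complete once we arrange $\nu(Y)_k = X_k$ for every $k$ simultaneously. The key leverage is that all three targets are stringent hyperfields (Remark~\ref{rem:stringent}): for fixed $k$ the hypersum is a single element unless the set of indices achieving the minimum of $|\alpha_j D^M(j,B)_k|$ has leading $F$-parts summing to $\0_F$. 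At non-cancellation coordinates, $\nu(Y)_k = X_k$ is automatic regardless of the lift, so the only coordinates requiring real care are the cancellation coordinates, and at each such coordinate $X_k$ can be any element whose modulus is at least the cancelling minimum.

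The main obstacle is thus to coordinate higher-order corrections of the $\lambda_j$ so that every cancellation coordinate $k$ returns exactly $X_k$. The plan is to process the cancellation coordinates in order of increasing modulus, at each stage choosing the next Hahn-series coefficient of each $\lambda_j$ to force the correct leading datum of $\sum_j \lambda_j D^N(j,B)_k$; this reduces to a solvable linear condition governed by a minor of the fundamental-cocircuit submatrix of a representing matrix for $N$, and the infiniteness of $\FF$ supplies enough generic choices to satisfy these successive conditions without disturbing previously matched coordinates. The three cases $\nu \in \{\val, \sval, \fval\}$ can then be handled uniformly using the stringent covector-composition axioms of \cite{Bowler+Pendavingh:2019}, with the only case-specific input being what a "leading $F$-part" means: trivial data for $\val$, a sign for $\sval$, and a field element for $\fval$.
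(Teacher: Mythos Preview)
Your route is genuinely different from the paper's. The paper does not attempt to lift an arbitrary $X\in\cV^*(\nu_*(N))$ directly. Instead it shows that the image $\nu(\cV^*(N))$ satisfies the Bowler--Pendavingh (co)vector axioms for stringent hyperfields; the only nontrivial axiom is the composition axiom (V2), and checking it reduces to the following: given $X,Y\in\nu(\cV^*(N))$ with lifts $\tilde X,\tilde Y\in\cV^*(N)$, find a single scalar $\lambda\in\FF$ so that $\nu(\tilde X+\lambda\tilde Y)=X\circ Y$. The infiniteness (or orderedness) of $\FF$ is used precisely here to avoid finitely many bad values of $\lambda$. Once (V2) holds, $\nu(\cV^*(N))=\cV^*(M')$ for some $F[\Gamma]$-matroid $M'$, and a rank comparison forces $M'=\nu_*(N)$. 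This sidesteps any Hahn-series bookkeeping.

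Your proposal, by contrast, has a real gap at the step you flag as ``reduces to a solvable linear condition governed by a minor of the fundamental-cocircuit submatrix.'' The point is that working with a \emph{single} basis $B$ gives you only the span condition $X\in\mathrm{span}(\cC^*_B(M))$, which is strictly weaker than $X\in\cV^*(M)$; the extra constraints coming from other bases (equivalently, from orthogonality to all circuits) are exactly what make simultaneous lifting at several cancellation coordinates consistent. Concretely, if two non-basis columns $k_1,k_2$ satisfy a linear dependence among the $A_{jk}$'s, then $Y_{k_1}$ and $Y_{k_2}$ are tied together for every lift, and the fact that $X_{k_1}$ and $X_{k_2}$ are compatible comes from a circuit you have not invoked. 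Your inductive scheme (``process in order of increasing modulus, choose the next Hahn coefficient'') neither specifies which modulus is meant nor explains why fixing one cancellation coordinate does not disturb another at the same level; and for $\fval$ the constraints are linear \emph{equalities}, so the appeal to ``generic choices'' from infiniteness of $\FF$ is not the right mechanism. It is plausible that a direct lifting argument can be pushed through, but it would require a careful simultaneous solution of a $\Gamma$-filtered linear system and an explicit use of orthogonality to \emph{all} circuits of $M$ --- none of which is present in the sketch. The paper's composition argument avoids this entirely.
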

\begin{proof}
Suppose $\nu(\cV^*(N))$ forms the set of covectors $\cV^*(M)$ of an $F[\Gamma]$-matroid $M$.
As $F[\Gamma]$ can be written as a quotient of a field by a subgroup of the units \cite[Theorem 7.5]{Bowler+Su:2021}, we can apply~\cite[Theorem 4.15]{Baker+Zhang:2022} to deduce $M$ and $\nu_*(N)$ have the same matroid rank.
Combining \cite[Corollary 2.6, Theorem 2.16]{Jarra+Lorscheid:2022}, we deduce that $M = \nu_*(N)$, and hence $\nu(\cV^*(N)) = \cV^*(\nu_*(N))$.
It remains to show that $\nu(\cV^*(N))$ forms the set of covectors of an $F[\Gamma]$-matroid.

We utilise the vector axioms in~\cite[Theorem 44]{Bowler+Pendavingh:2019} to show that $\nu(\cV^*(N))$ are the (co)vectors of an $F[\Gamma]$-matroid.
Their axioms (V0), (V1) and (V3) follow immediately from properties of linear spaces and surjective tract homomorphisms, hence it remains only to show (V2).
Define the composition map $\circ \colon F[\Gamma] \times F[\Gamma] \rightarrow F[\Gamma]$ by
\[
X \circ Y = \begin{cases}
Z & \text{ if } X \boxplus Y = \{Z\} \\
X & \text{ if } X = -Y \text{ and } F \in \{\KK,\SS\} \\
\0 & \text{ if } X = -Y \text{ and } F =\FF
\end{cases} \, ,
\] 
and extend it componentwise to a binary operation on $F[\Gamma]^E$.
To prove (V2), we fix some $X, Y \in \nu(\cV^*(N))$ such that $\underline{X \circ Y} = \underline{X} \cup \underline{Y}$, and show that $X \circ Y \in \nu(\cV^*(N))$.

Consider $\tilde{X},\tilde{Y} \in \cV^*(N)$ such that $\nu(\tilde{X}) = X$ and $\nu(\tilde{Y}) = Y$.
If either $X_i$ or $Y_i$ equals $\0$, the sum $X_i \boxplus Y_i$ is a singleton and so it immediately follows that $\nu(\tilde{X}_i + \tilde{Y}_i) = X_i \circ Y_i$.
Hence we may assume that $X_i, Y_i \neq \0$ for all $i \in E$.
If $X_i \boxplus Y_i = \{Z_i\}$, the tract homomorphism definition implies that 
$\nu(\tilde{X}_i + \tilde{Y}_i) = Z_i$.
In the case that $F = \FF$ (i.e. $\nu = \fval$), this immediately proves that $X \circ Y \in \nu(\cV^*(N))$, as the hypothesis that $\underline{X \circ Y} = \underline{X} \cup \underline{Y}$ implies that $X_i \neq -Y_i$ for all $i \in E$.

In the case where $F \in \{\KK,\SS\}$, we always have $\underline{X \circ Y} = \underline{X} \cup \underline{Y}$.
As $\tilde{X}$ and $\tilde{Y}$ are tuples of non-zero Hahn series, we write their leading terms as
\[
\lt(\tilde{X}_i) = c_i t^{\gamma_i} \quad , \quad \lt(\tilde{Y}_i) = d_{i} t^{\eta_i} \quad , \quad c_i, d_i \in \FF \, , \: \gamma_i, \eta_i \in \Gamma \, , \: i \in E \, .
\]
In the case that $F = \KK$ (i.e. $\nu = \val$), as $\FF$ an infinite field there exists a non-zero scalar $\lambda$ such that $\lambda \neq -c_i/d_i$ for all $i \in E$.
Then 
\[
\nu(\tilde{X}_i + \lambda\tilde{Y}_i) = \begin{cases} \min(\gamma_i, \eta_i) & \text{ if } \gamma_i \neq \eta_i \\ \gamma_i & \text{ if } \gamma_i = \eta_i \end{cases} \: = X_i\circ Y_i \in \nu(\cV^*(N)) \, .
\]
In the case where $F = \SS$ (i.e. $\nu = \sval$), we must have $\FF$ is an ordered field as there exists a homomorphism to the sign hyperfield via
\[
\FF \hookrightarrow \hseries{\FF}{\Gamma} \xrightarrow{\nu} \SS[\Gamma] \xrightarrow{\theta} \SS \, .
\]
As such, there exists a positive element $\lambda \in \FF_{>0}$ such that $\lambda < |c_i|/|d_i|$ for all $i \in E$.
Then $\sgn(c_i) = \sgn(c_i + \lambda d_i)$ and hence
\[
\nu(\tilde{X}_i + \lambda\tilde{Y}_i) = \begin{cases} (\sgn(c_i),\gamma_i) & \text{ if } \gamma_i < \eta_i \\
(\sgn(d_i),\eta_i) & \text{ if } \gamma_i > \eta_i \\ (\sgn(c_i),\gamma_i) & \text{ if } \gamma_i = \eta_i \end{cases} \, = X_i\circ Y_i \in \nu(\cV^*(N)) \, .
\]
\end{proof}

We note that equality will not hold for all enriched valuations.
For example, \cite[Example 4.7]{Anderson:2019} demonstrates a $\CC$-matroid whose image in the phase map $\ph\colon \CC \rightarrow\Theta$ satisfies $\ph(\cV^*(N)) \subsetneq \cV^*(\ph_*(N))$.
It is straightforward to extend this example to a $\hseries{\CC}{\RR}$-matroid by extending by scalars such that its image in the phase valuation $\phval\colon \hseries{\CC}{\RR} \rightarrow\Theta[\Gamma]$ satisfies $\phval(\cV^*(N)) \subsetneq \cV^*(\ph_*(N))$.


\begin{remark}
In~\cite{Baker+Zhang:2022}, the authors describe a morphism of tracts $f \colon F \rightarrow F'$ as \emph{epic} if for any $F$-matroid $M$, we have $f(\cV(M)) = \cV(f_*(M))$.
They study epic morphisms to obtain results on various notions of matrix rank over tracts.
Proposition~\ref{prop:tropicalisation} can be rephrased as showing $\val$, $\sval$ and $\fval$ are all epic morphisms, the first of which was already shown in~\cite{Baker+Zhang:2022}.
\end{remark}

\begin{remark}
For varieties of degree greater than one, the equality in Proposition~\ref{prop:tropicalisation} no longer holds in general.
As with standard tropical varieties, one has to worry about various notions of algebraic closure; see~\cite{Maxwell+Smith:2023} for further details in the context of tropical extensions.
\end{remark}

\subsection{Examples of enriched tropical linear spaces} \label{sec:enriched+examples}

The remainder of this section is dedicated to studying tropical linear spaces over various tracts.
We show how our perspective recovers already known theory and extends it further to enriched  tropical linear spaces.

\paragraph{Tropical linear spaces}
Consider the tropical hyperfield $\TT = \KK[\RR]$.
The modulus map $|\cdot|$ is just the identity and so we shall drop it.
Moreover, recall that the phase map $\theta \colon \KK[\RR] \rightarrow \KK$ sends a tuple $X \in \TT^E$ to its support $\theta(X) = \underline{X} \in \KK^E$.
As such, initial matroids of $\TT$-matroids are ordinary matroids.
To avoid confusion, we shall always write $\0$ and $\1$ for the identity elements of $\KK$ and $\infty$ and $0$ for the identity elements of $\TT$.

When specialised to $\TT$, Theorem~\ref{thm:D} recovers (i)-(iii) of the characterisation of tropical linear spaces in \cite[Theorem B]{Brandt+Eur+Zhang:2021}.
For (i) and (ii), this is immediate, but (iii) requires some additional explanation.
Characterisation \ref{eq:iii} and Equation~\ref{eq:covector+containment} imply that $X \in \cL_M$ if and only if $\theta(X) \perp C^{X}$  for all $C \in \cC(M)$.
Note that if $\theta(X_i) = \0$, then $X_i = \infty$ and so $C_i^{X} = \0$ also.
This, along with $N_\KK$ being all summations other than the singleton element $\1$, implies that
\[
\theta(X) \perp C^{X} \, \Longleftrightarrow \, \sum_{i\in E} \theta(X_i)\cdot C_i^{X} \in N_{\KK} \, \Longleftrightarrow \, C^{X} \text{ not a singleton } \, .
\]
As the circuits of $M^{X}$ are the set of $C^{X}$ of minimal support, this holds if and only if $M^{X}$ is loopless, i.e. no single element circuits.
As such, we recover the following familiar characterisation of a tropical linear space associated to a $\TT$-matroid
\begin{equation} \label{eq:loopless}
\cL_M = \SetOf{X \in \TT^E}{M^{X} \text{ is loopless }} \, .
\end{equation}
This characterisation first appears for tropical linear spaces restricted to $\RR^E$ in~\cite{Speyer:2008,Rincon:2013}, and all of $\TT^E$ in~\cite{Brandt+Eur+Zhang:2021}.
To avoid issues with $+\infty$ and $-\infty$, they instead look at an equivalent condition where $(M^{X})^*$ is coloopless, which is required due to their polyhedral definition of initial matroids.
Our algebraic definition means we can simply state the usual characterisation.

Characterisations (iv) of Theorem~\ref{thm:D} and \cite[Theorem B]{Brandt+Eur+Zhang:2021} are not immediately comparable, as the former is phrased as a span over the tropical hyperfield and the latter as a span over the tropical semiring.
These two notions are usually not directly comparable, but the equivalence between the characterisations (i)-(iii) allows us to deduce some of the interplay between these two notions.
Given a finite subset $\cX \subseteq \TT^E$, we write its tropical semiring span as
\[
\tspan(\cX) := \SetOf{\min(\alpha_X + X) \in \TT^E}{X \in \cX \, , \, \alpha_X \in \RR \cup \{\infty\}}
\]
where $\min$ is applied coordinatewise.
We define the tropical hyperfield span as the usual $\spn(\cX)$ over $\TT$ as in \eqref{eq:span}.
Clearly $\tspan(\cX) \subseteq \spn(\cX)$ and generally this containment is strict.
However, when $M$ is a $\TT$-matroid we can deduce the following equality relating $\spn$ and $\tspan$ of its cocircuits from Theorem~\ref{thm:D} and \cite[Theorem B]{Brandt+Eur+Zhang:2021}:
\begin{equation} \label{eq:tspan}
\cL_M = \bigcap_{B \in \cB(\underline{M})} \spn(\cC^*_B(M)) = \tspan(\cC^*(M)) \, .
\end{equation}
While the latter notion is seemingly more natural, we shall see that it becomes increasingly  hard to define as we move to more general tropical extensions.

Finally, we note that \cite[Theorem B]{Brandt+Eur+Zhang:2021} has a fifth characterisation in terms of the topological closure of $\cL_{M/\ell} \times \{\infty\}^\ell$ inside $\TT^E$, where $\ell$ is the set of loops of $M$.
As it is unclear what topology can be imbued on more general tropical extensions, we do not try to generalise this characterisation.

\begin{example}\label{ex:trop+linear+space}
Recall the linear space $L \subseteq \hseries{\RR}{\RR}^4$ from Example~\ref{ex:linear+space}, and denote its associated $\hseries{\RR}{\RR}$-matroid $N$.
As defined in Example~\ref{ex:val}, $(\hseries{\RR}{\RR}, \val)$ is a valued field.
As such, Proposition~\ref{prop:tropicalisation} implies the image $\val(L)$ in the valuation map is the tropical linear space $\cL_M$ where $M = \val_*(N)$ is a $\TT$-matroid.
    Its points are precisely those elements orthogonal to $\val(\cC(N))$, namely
\[
\cC(M) = \val(\cC(N)) = \left\{
    (a,a,a,\infty) \, , \,
    (a,a,\infty,a) \, , \,
    (a+1,\infty,a,a) \, , \,
    (\infty,a+1,a,a) \, \mid a \in \RR \right\} \, .
    \]
    It can be be checked that $\cL_M$ is the set of tuples $X \in \TT^4$ satisfying one of the following seven conditions:
    \begin{align*}
    [1] \quad &\colon \quad X_1 > X_2 = X_3 = X_4  &
    [2] \quad &\colon \quad X_2 > X_1 = X_3 = X_4  \\
    [3] \quad &\colon \quad X_1 = X_2 = X_3 = X_4  &
    [4] \quad &\colon \quad X_3 = X_4 > X_1 = X_2 > X_4 -1 \\
    [5] \quad &\colon \quad X_3 > X_4 = X_1 + 1 = X_2 +1  &
    [6] \quad &\colon \quad X_3 = X_4 = X_1 +1 = X_2 +1 \\
    [7] \quad &\colon \quad X_4 > X_3 = X_1 +1 = X_2 +1 &&
    \end{align*}
    The tropical linear space is displayed in Figure~\ref{fig:trop+linear+space}.
    There are seven non-trivial initial $\KK$-matroids associated to this linear space, one corresponding to each of the seven `polyhedral' pieces of $\cL_M = \val(L)$.
    For example, any $X$ in the ray labelled $[1]$ gives rise to the same loopless initial matroid with circuits and covectors
    \[
    \cC(M^{X}) = \{(\0,\1,\1,\0), (\0,\1,\0,\1), (\0,\0,\1,\1) \} \quad , \quad \cV^*(M^{X}) = \{(\bullet,\1,\1,\1), (\bullet,\0,\0,\0) \mid \bullet \in \KK\} \, .
    \]
    Note that by \ref{eq:iii} of Theorem~\ref{thm:D}, the covectors of $\cV^*(M^{X})$ give rise to legitimate points in $\cL_M$ if and only if they have the same support as $X$.
    For example, $\theta(X) = (\1,\1,\1,\1)$ only when $X_1 \neq \infty$ and $\theta(X) = (\0,\1,\1,\1)$ only when $X_1 = \infty$.
    There is no such $X$ in this ray with $\theta(X) = (\bullet,\0,\0,\0)$ as $X_1$ must be strictly greater than the other coordinates.
    
\end{example}

\begin{figure}[ht]
\centering
\includegraphics[width=0.4\textwidth]{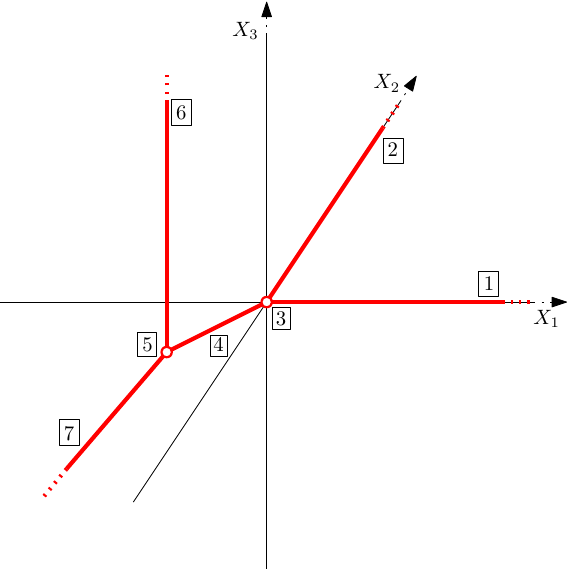}
\caption{The tropical linear space $\cL_M = \val(L)$ from Example~\ref{ex:trop+linear+space}, normalised to $X_4 = \1_{\TT} = 0$.
The labels correspond to the seven non-trivial initial $\KK$-matroids of $M$.}
\label{fig:trop+linear+space}
\end{figure}

\begin{example}
We give a brief example of a tropical linear space whose underlying matroid contains loops.
Let $M$ be the $\TT$-matroid with circuits
\[
\cC(M) = \left\{
    (a,a,\infty,\infty) \, , \,
    (a,\infty,a,\infty) \, , \,
    (\infty,a,a,\infty) \, , \,
    (\infty,\infty,\infty,a) \, \mid a \in \RR \right\} \, .
\]
The underlying matroid $\underline{M}$ has circuits $\{12,13,23,4\}$: in particular it contains $4$ as a loop.
The resulting tropical linear space is
\[
\cL_M = \{(\lambda, \lambda, \lambda, \infty) \, \mid \, \lambda \in \TT\} \, ,
\]
where $X_4 = \infty$ for all $X \in \cL_M$ to ensure orthogonality with the final circuit.
As such, for any $X \in \cL_M$ we have $M^{X} = \{12,13,23\}$ is loopless; in fact, $4$ has become a coloop.
\end{example}

\paragraph{Signed tropical linear spaces}
Consider the signed tropical hyperfield $\TT_\pm \cong \SS[\RR]$.
We denote the phase map $\theta$ by $\sgn\colon \TT_\pm \rightarrow \SS$ as it records sign data of an element, and recall that the modulus map $|\cdot|\colon \TT_\pm \rightarrow \TT$ forgets the sign data.
Given a $\TT_\pm$-matroid $M$, its initial matroids are $\SS$-matroids, i.e. oriented matroids.
As such, we can view the signed tropical linear space $\cL_M \subseteq \TT_\pm^E$ as a tropical linear space with an oriented matroid associated to each point.
To again avoid confusion between $\SS$ and $\TT_\pm$, we will write the elements of $\SS$ as $\{+, \0, -\}$ and elements of $\TT_\pm$ as $(\pm, a)$ for $a \in \RR$, and $\infty$ as the zero element of $\TT_\pm$.
Recall that $\1_{\TT_\pm} = (+,0)$.

We consider a couple of descriptions of signed tropical linear spaces arising from Theorem~\ref{thm:D}.
We first deduce a description analogous to \eqref{eq:loopless} from characterisation \ref{eq:iii} by considering when $\sgn(X) \in \cV^*(M^{|X|})$.
We say that two sign vectors $Y, Z \in \SS^E$ are \emph{conformal} if $Y_e \cdot Z_e \neq -$ for all $e \in E$, and \emph{non-conformal} otherwise.
Characterisation \ref{eq:iii} and Equation~\eqref{eq:covector+containment} imply that $X \in \cL_M$ if and only if $\sgn(X) \perp C^{|X|}$  for all $C \in \cC(M)$, i.e.
\[
\sgn(X) \perp C^{|X|} \, \Longleftrightarrow \, \sum_{i\in E} \sgn(X_i)\cdot C_i^{|X|} \in N_{\SS} \, \Longleftrightarrow \, \exists j,k \in E \text{ s.t } \sgn(X_j)\cdot C_j^{|X|} = - \sgn(X_k)\cdot C_k^{|X|} \neq \0 \, .
\]
There exists some $C \in \cC(M)$ that does not satisfy this if and only if either $C^{|X|}$ or $-C^{|X|}$ is conformal with $\sgn(X)$.
Hence, we can quantify the corresponding signed tropical linear space as
\begin{equation*}\label{eq:nonconformal}
\cL_M = \SetOf{X \in \TT_\pm^E}{\sgn(X) \text{ and } C^{|X|} \text{ are non-conformal } \forall C \in \cC(M)} \, .
\end{equation*}
Note that it is necessary that $M^{|X|}$ is loopless for all $X \in \cL_M$, though this is no longer sufficient.

Characterisation \ref{eq:iv} again describes $\cL_M$ as (the intersection of) the span of fundamental cocircuits $\cC^*_B(M)$.
In their study of flavours of convexity over the signed tropical hyperfield, Loho and Skomra~\cite[Theorem 7.8]{Loho+Skomra:2024} give a different description of $\cL_M$, showing it is the \emph{tropical-closed span} ${\rm tcspan}(\cC^*(M)) \subseteq \TT_\pm^E$ of all of the cocircuits.
Our notion of span is equivalent to their \emph{tropical-open span}, which is a superset of the tropical-closed span.
As with $\TT$, this gives us a relation between these two different notions of span over $\TT_\pm$:
\begin{equation}\label{eq:tcspan}
\cL_M = \bigcap_{B \in \cB(\underline{M})} \spn(\cC^*_B(M)) = {\rm tcspan}(\cC^*(M)) \, .
\end{equation}
Despite being the cleaner formulation, the tropical-closed span is much more delicate to define and to work with.
It would be interesting to generalise Equations~\eqref{eq:tspan} and~\eqref{eq:tcspan} to other tropical extensions of tracts by finding finer notions of span over these tracts, analogous $\tspan$ and ${\rm tcspan}$.

\begin{example}\label{ex:signed+tropical+linear+space+1}
Consider the $\TT_\pm$-matroid $M$ with the unique (up to scaling) circuit $[(+,0),(+,0),(-,0)] \in \TT_\pm^3$.
The corresponding signed tropical linear space $\cL_M$ is the set of covectors of $M$:
\begin{align*}
& & R_1 &= \SetOf{X \in \TT_\pm^3}{|X_2| = |X_3| \leq |X_1| \, , \, \sgn(X_2) = \sgn(X_3) \neq \0} \\
\cL_M = \cV^*(M) &=  R_1 \cup R_2 \cup R_3 \, , & R_2 &= \SetOf{X \in \TT_\pm^3}{|X_1| = |X_3| \leq |X_2| \, , \, \sgn(X_1) = \sgn(X_3) \neq \0} \\
& & R_3 &= \SetOf{X \in \TT_\pm^3}{|X_1| = |X_2| \leq |X_3| \, , \, \sgn(X_1) = -\sgn(X_2) \neq \0}
\end{align*}
Two different visualisations of $\cL_M$ are given in Figure~\ref{fig:sign+trop+linear+space+1}.
On the left, we simply view $\cL_M$ in $\TT_\pm^3$ normalised to $X_3 = \1_{\TT_\pm} = (+,0)$, where $R_1$ is red, $R_2$ is blue and $R_3$ is black.
\begin{figure}
\centering
\includegraphics[width=\textwidth]{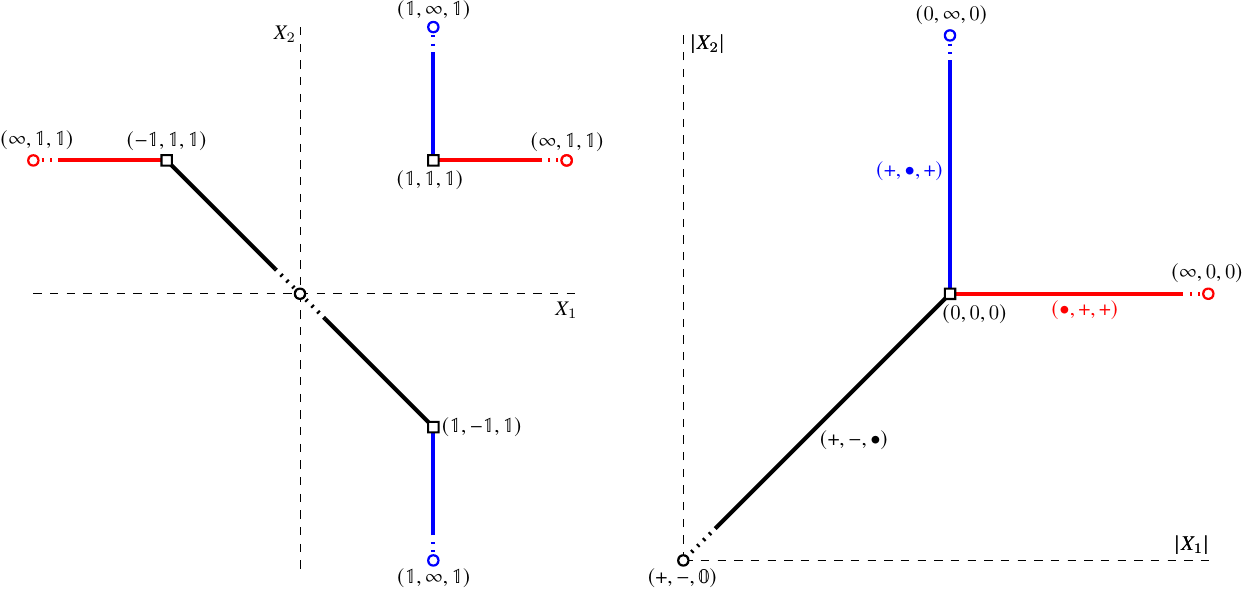}
\caption{The signed tropical linear space from Example~\ref{ex:signed+tropical+linear+space+1}.
The left figure shows $\cL_M$ in $\TT_\pm^3$, normalised to $X_3 = \1 = (+,0)$.
The right figure shows $\cL_M$ as a tropical linear space in $\TT^3$, normalised to $|X_3| = 0$, with an initial $\SS$-matroid attached to each point.}
\label{fig:sign+trop+linear+space+1}
\end{figure}

On the right, we take an initial matroid viewpoint by viewing $\cL_M$ in $\TT^3$ (normalised to $|X_3| = 0$) where each point $|X|$ has an initial $\SS$-matroid $M^{|X|}$ attached.
Write $|R_i|$ for the image of $R_i$ in $\TT^3$, we again have $|R_1|$ is red, $|R_2|$ is blue and $|R_3|$ is black.
Any red point $|X| \in |R_1|$ has the initial oriented matroid $M^{|X|}$ with circuits and covectors
\[
\cC(M^{|X|}) = \{(\0,+,-),(\0,-,+)\} \quad , \quad \cV^*(M^{|X|}) = \SetOf{(a,b,b)}{a,b \in \SS} \subseteq \SS^3 \, .
\]
A covector of $M^{|X|}$ gives rise to a point of $\cL_M$ if and only if it has the same support as $X$, i.e.
\begin{align*}
X_1 \neq \infty \, &\Rightarrow \, \sgn(X) \in \{(+,+,+), (-,+,+), (+,-,-), (-,-,-)\} \, , \\
X_1 = \infty \, &\Rightarrow \, \sgn(X) \in \{(\0,+,+), (\0,-,-)\}  \, .
\end{align*}
Note that the latter case is the point at infinity closing the open ray $|R_1|$.
In Figure~\ref{fig:sign+trop+linear+space+1}, we label the ray $|R_1|$ with $(\bullet,+,+)$ to denote the initial oriented matroid has covectors given by non-zero scalings of $(\bullet,+,+)$ where $\bullet \in \SS$ is non-zero if and only if $|X_1| \neq \infty$.
The initial matroids of other rays are labelled similarly.
It is straightforward to verify that the initial matroid at the `origin' $\bm{0} = (0,0,0) \in \TT^3$ has circuits and covectors
\begin{equation}\label{eq:oriented+initial+matroid}
\cC(M^{\bm{0}}) = \{(+,+,-),(-,-,+)\} \, , \quad \cV^*(M^{\bm{0}}) = \{(a,-a,b), (a,b,a), (b,a,a) \mid a, b \in \SS \} \subseteq \SS^3 \, .
\end{equation}
However, we again note that only those covectors with the same support as $\bm{0}$ arise as $\sgn(X)$ for some $|X| = \bm{0}$, i.e. those covectors where $a, b \neq \0$.

The relationship between these two viewpoints can be seen through the wider lens of Viro's patchworking method~\cite{Viro:1984}.
The covectors of the initial matroid $M^{|X|}$ tell us which orthants in $\TT_\pm^3$ contain the point $|X|$.
For example, Equation~\eqref{eq:oriented+initial+matroid} tells us that $[(+,0),(+,0),(+,0)]$ and $
[(-,0),(+,0),(+,0)]$ appear in $\cL_M$, but $[(-,0),(-,0),(+,0)]$ does not.
One can view moving between the two viewpoints in Figure~\ref{fig:sign+trop+linear+space+1} by folding or unfolding the orthants onto the positive orthant.
\end{example}

From now on, we will take the second approach of viewing enriched tropical linear spaces as tropical linear spaces with initial matroids attached to each point.
This will make figures more approachable, but know that we can always take the former viewpoint by some analogue of patchworking.

\begin{example}\label{ex:signed+tropical+linear+space+2}
    Recall the linear space $L \subseteq \hseries{\RR}{\RR}^4$ from Example~\ref{ex:linear+space}.
    This linear space is the set of covectors $\cV^*(N)$ of an $\hseries{\RR}{\RR}$-matroid $N$.
    By Proposition~\ref{prop:tropicalisation}, the signed tropicalisation of this linear space can be given as both $\sval(L)$ and $\cL_M$ where $M = \sval_*(N)$ is a $\TT_\pm$-matroid.
    Its points are precisely those elements orthogonal to the $\TT_\pm$-circuits
    \[
    \cC(M) = \sval(\cC(N)) =
    \left\{ \begin{aligned}
    &\alpha \cdot [(+,0),(+,0),(+,0),\infty]\, , \,
    \alpha \cdot [(+,0),(+,0),\infty,(-,0)]\, , \\
    &\alpha \cdot [(+,1),\infty,(-,0),(-,0)]\, , \,
    \alpha \cdot [\infty,(+,1),(+,0),(+,0)] \, 
    \end{aligned}
    \; \bigg| \; \alpha \in \TT_\pm \setminus \{\infty\}
    \right\} \, .
    \]
    We view the corresponding signed tropical linear space as a tropical linear space in $\TT^4$ with an initial $\SS$-matroid associated to each point.
    This is the tropical linear space in Figure~\ref{fig:trop+linear+space} with an initial $\SS$-matroid associated to each of the seven pieces of $\val(L)$.
    
    As an explicit example, write $R = \{|X| \in \val(L) \, \mid \, |X_1| > |X_2| = |X_3| = |X_4|\}$ for the ray [1] of $\val(L)$, and consider $X \in \cL_M$ such that $|X| \in R$.
    From Proposition~\ref{prop:initial+circuits}, we can calculate that the initial $\SS$-matroid $M^{|X|}$ has circuits 
    \[
 \cC(M^{|X|}) = \{(\0,+,+,\0), (\0,-,-,\0), (\0,+,\0,-),(\0,-,\0,+), (\0,\0,+,+), (\0,\0,-,-)\} \, .
    \]
    From this, we deduce that the covectors of the initial matroid are $\cV^*(M^{|X|}) = \left\{(a, b, -b,b) \, \mid \, a,b \in \SS \right\}$.
    Using \ref{eq:iii} from Theorem~\ref{thm:D}, we deduce that the subset $\tilde{R} \subseteq \cL_M$ that maps onto $R$ under $|\cdot|$ is precisely
    \begin{align*}
    \tilde{R} &= \tilde{R}^\circ \cup \tilde{R}^\infty = \left\{X \in \cL_M \, \mid \, |X| \in R \right\} \\
\text{where} \quad   \tilde{R}^\circ &= \{[(a,\eta), (b,\gamma), (-b,\gamma), (b,\gamma)] \in \TT_\pm^4 \, \mid \, \gamma, \eta \in \RR\, , \,\gamma < \eta \, , \, a, b \in \{+,-\}\} \\
       \tilde{R}^\infty &=  \{[\infty, (b,\gamma), (-b,\gamma), (b,\gamma)] \in \TT_\pm^4 \, \mid \, \gamma \in \RR \, , \, b \in \{+,-\}\} \, .
    \end{align*}
    The set $\tilde{R}^\circ$ maps precisely onto the open part of $R$, while $\tilde{R}^\infty$ maps onto the closed point at infinity.
Via Viro's patchworking, we can consider $\tilde{R}$ as an unfolding of $R$ governed by the initial oriented matroid.

Repeating this calculation across all pieces of $\val(L)$, we obtain the covectors of all seven initial oriented matroids:
        \begin{align*}
    [1] \quad &\colon \quad \cV^*(M^{|X|}) = \left\{(a, b, -b,b) \, \mid \, a,b \in \SS \right\} \\
    [2] \quad &\colon \quad \cV^*(M^{|X|}) = \left\{(b, a, -b,b) \, \mid \, a,b \in \SS \right\} \\
    [3] \quad &\colon \quad \cV^*(M^{|X|}) = \left\{(a, b, -b,b), (b, a, -b,b) \, \mid \, a,b \in \SS \right\} \\
    [4] \quad &\colon \quad \cV^*(M^{|X|}) = \left\{(a,-a,b,-b) \, \mid \, a,b \in \SS \right\} \\
    [5] \quad &\colon \quad \cV^*(M^{|X|}) = \left\{(a,-a,b,a) \, \mid \, a,b \in \SS \right\} \\
    [6] \quad &\colon \quad \cV^*(M^{|X|}) = \left\{(a,-a,b,a), (a,-a,a,b) \, \mid \, a,b \in \SS \right\} \\
    [7] \quad &\colon \quad \cV^*(M^{|X|}) = \left\{(a,-a,a,b) \, \mid \, a,b \in \SS \right\} \\
    \end{align*}
    These completely characterise $\cL_M = \sval(L)$ and give us an unfolding of $\val(L) \subset \TT^4$ into $\sval(L) \subset \TT_\pm^4$.
\end{example}

\begin{remark}\label{rem:signed+survey}
There is already a fairly substantial literature surrounding signed tropical linear spaces, as highlighted in Section~\ref{sec:related+work}.
We give a brief survey of what is known and how it relates to our results.

Tabera \cite{Tabera:2015} studies tropicalisations of varieties over real Puiseux series via real tropical bases.
While real tropical bases do not exist for all real varieties, they do for sufficiently simple varieties including linear spaces.
In particular, one can deduce from \cite[Theorem 3.14]{Tabera:2015} that a real tropical basis for a linear space is precisely the circuits of an oriented valuated matroid.
This agrees with our definition of a signed tropical linear space as the set of covectors of an oriented valuated matroid.

A number of authors have considered sign structures on Bergman fans from different perspectives.
J\"urgens \cite{Jurgens:2018} introduces the \emph{signed Bergman fan} of an oriented matroid with respect to a sign vector.
Following the framework of \cite{Tabera:2015}, these arise as a subset of the tropicalisation of linear spaces over $\RR$, namely those points with the specified sign vector.
Restricting to the positive sign vector recovers the positive Bergman fan of~\cite{ArdilaKlivansWilliams:2006}.
Celaya \cite{Celaya:2019} introduces the \emph{real Bergman fan} as a subfan of the normal fan of the signed matroid polytope, which can be viewed as a gluing of  J\"urgens' signed Bergman fans.
They give two different fan structures on this fan: a fine structure in terms of flags of conformal vectors, and a coarse structure in terms of loopless initial matroids.
Finally, \cite{Rau+Renaudineau+Shaw:2022} study real phase structures on Bergman fans, giving a cryptomorphism with oriented matroids.
This is directly connected to the previous two constructions, viewed either as a union of J\"urgens' signed Bergman fans over all sign vectors, or as a `folding' of Celaya's real Bergman fan.
All three of these constructions may be viewed as signed tropical linear spaces, arising as the signed tropicalisation of a linear space over $\RR$.
Their corresponding oriented valuated matroids have Pl\"ucker vectors that take values only in $\{(+,0),\infty, (-,0)\}$.

\cite{Celaya+Loho+Yuen:2024} consider an abstract analogue of Viro's patchworking for oriented matroids, where patchworking is governed by a matroid subdivision of a matroid polytope.
When restricted to regular matroid subdivisions, their results precisely give Theorem~\ref{thm:A} in the case where $F[\Gamma] = \TT_\pm$.
Finally, we note again that \cite{Loho+Skomra:2024} describe a signed tropical linear space as the tropical-closed span of all its cocircuits.
\end{remark}

\paragraph{Fine tropical linear spaces}

Our final example is the fine valuation $\fval \colon \hseries{\FF}{\RR} \rightarrow \FF[\RR]$ introduced in Example~\ref{ex:fine+val}.
Unlike the previous two examples, this enriched valuation and its tropical extension $\FF[\RR]$ have not been widely studied outside of~\cite{Maxwell+Smith:2023}.
However, Proposition~\ref{prop:tropicalisation} and Remark~\ref{rem:stringent} make it an exceedingly natural candidate for further study.

Recall that $\fval$ remembers the leading term $\lt(\omega)$ of a Hahn series $\omega$, and so elements of $\FF[\RR]$ can be viewed as a first-order approximation of elements of $\hseries{\FF}{\RR}$.
The phase map $\theta\colon \FF[\RR] \rightarrow \FF$ records the leading coefficient $\lc(\omega)$ of $\omega$, and as previously the modulus map $| \cdot | \colon \FF[\RR] \rightarrow \TT$ records the leading power $\lp(\omega)$ of $\omega$.
The initial matroids of an $\FF[\Gamma]$-matroid $M$ are $\FF$-matroids, linear spaces over $\FF$, and so we can view the \emph{fine tropical linear space} $\cL_M$ as a tropical linear space with a linear space over $\FF$ attached to each point.
Moreover, if $\cL_M = \fval(L)$ arises as the fine tropicalisation of the linear space $L \subseteq \hseries{\FF}{\RR}^E$, we can view $\cL_M$ as a first-order approximation of $L$.
This gives us an explicit description of $\cL_M$ in terms of initial degenerations of the original linear space $L$.

We very briefly recall some Gr\"obner theory over valued fields, see~\cite{Maclagan+Sturmfels:2015} for more details.
Given some $\bu \in (\RR \cup \{\infty\})^E$, consider the map of polynomial rings
\begin{align*} \label{eq:degeneration}
\initial_\bu \colon\hseries{\FF}{\RR}[Z_e : e \in E] &\longrightarrow \FF[Z_e : e \in E] \nonumber\\
f = \sum_{\ba \in A} \omega_\ba \cdot Z^\ba &\longmapsto \initial_\bu(f) := \sum_{\ba \in A_{\min}} \lc(\omega_\ba)\cdot Z^\ba \, , \quad A_{\min} = \underset{\ba \in A}{\argmin}(\lp(\omega_\ba) + \ba \cdot \bu)
\end{align*}
Viewing $t$ as a variable in the first ring, this is the \emph{initial form} of the polynomial $f$ with respect to the term order $(1, \bu)$.
Given an ideal $I \subseteq\hseries{\FF}{\RR}[Z_e : e \in E]$, its \emph{initial ideal} $\initial_\bu(I)$ with respect to $\bu$ is 
\[
\initial_\bu(I) = \langle \initial_\bu(f) \mid f \in I\rangle \subset \FF[Z_e : e \in E] \, ,
\]
the ideal generated by all initial forms of polynomials in $I$.
If $V := V(I) \subseteq \hseries{\FF}{\RR}^E$ is the associated variety of $I$, we define its \emph{initial degeneration} with respect to $\bu$ to be $\initial_\bu(V) := V(\initial_\bu(I)) \subseteq \FF^E$.

Now let $L \subseteq \hseries{\FF}{\RR}^E$ be the linear space with corresponding $\hseries{\FF}{\RR}$-matroid $N$ such that $\fval(L) = \cL_M$, or equivalently $M = \fval_*(N)$.
As defined in~\eqref{eq:circuit+ideal}, $L$ is the linear space cut out by the ideal $I_N$ where 
\[
I_N = \langle f_C \mid C \in \cC(N) \rangle \, , \quad f_C = \sum_{i\in E} C_i \cdot Z_i \in \hseries{\FF}{\RR}[Z_e : e \in E] \, .
\]
Moreover, picking one $f_C$ for each possible support $\underline{C}$ gives a universal Gr\"obner basis for $I_N$.
Hence the initial degeneration of $L$ with respect to $\bu$ is
\begin{align} \label{eq:initial+degen}
\initial_\bu(L) &= V(\langle \initial_\bu(f_C) \mid C \in \cC(N) \rangle) \subseteq \FF^E \, , \nonumber \\
\text{ where } \initial_\bu(f_C) &= \sum_{i \in E_{\min}} \lc(C_i) \cdot Z_i \, , \quad E_{\min} =  \underset{i \in E}{\argmin}(\lp(C_i) + u_i) \, .
\end{align}
We now relate these initial degenerations back to $\cL_M$ and initial matroids of $M$.
Given some $C \in \cC(N)$, we write $\widetilde{C} = \fval(C) = ((\lc(C_i),\lp(C_i))_{i \in E}$ for the corresponding circuit of $M$ in the image of $\fval$.
Given some $\bu \in (\RR \cup \{\infty\})^E$, the initial circuit $\widetilde{C}^\bu$ of $\widetilde{C}$ as defined in \eqref{eq:initial+circuit} is
\[
\widetilde{C}^\bu = \begin{cases}
\lc(C_i) & i \in \underset{i \in E}{\argmin}(\lp(C_i) + u_i) \\
\0_\FF & \text{ otherwise }
\end{cases} \, .
\]
Hence $\widetilde{C}^\bu$ is precisely the coefficients of the linear form $\initial_\bu(f_C)$ from \eqref{eq:initial+degen}, and therefore $\cV^*(M^\bu) = \initial_\bu(L)$.
Applying characterisation \ref{eq:iii} from Theorem~\ref{thm:D} gives a description of $\cL_M$ in terms of initial degenerations of $L$:
\begin{equation} \label{eq:linear+degen}
\cL_M = \SetOf{X \in \FF[\RR]^E}{\theta(X) \in \initial_{|X|}(L)} \, .
\end{equation}
%
%

\begin{example}
    Again, recall the linear space $L \subseteq \hseries{\RR}{\RR}^4$ from Example~\ref{ex:linear+space}, the set of covectors $\cV^*(N)$ of an $\hseries{\RR}{\RR}$-matroid $N$.
    Alternatively, it is the linear space cut out by the ideal
    \begin{equation}\label{eq:real+ideal}
    I_N = \langle \,
    2Z_1 + Z_2 + Z_3 \, , \,
    2Z_1 + (1-t)Z_2 -Z_4 \, , \,
    2tZ_1 - (1-t)Z_3 -Z_4 \, , \,
    tZ_2 + Z_3 + Z_4 \, 
    \rangle \subseteq \hseries{\RR}{\RR}[Z_1,Z_2,Z_3,Z_4] \, .
    \end{equation}
    By Proposition~\ref{prop:tropicalisation}, the image of this linear space $\fval(L)$ in the fine valuation map is equal to the covectors $\cL_M = \cV^*(M)$ of the $\RR[\RR]$-matroid $M = \fval_*(N)$.
    We demonstrate how we can view $\cL_M$ in terms of initial degenerations of the original linear space $L$ using \eqref{eq:linear+degen}.
    
    As an explicit example, write $R = \{|X| \in \val(L) \, \mid \, |X_1| > |X_2| = |X_3| = |X_4|\}$ for the ray [1] of $\val(L)$, and consider $X \in \cL_M$ such that $|X| \in R$.
    Rather than directly calculating the circuits of $M^{|X|}$ from Proposition~\ref{prop:initial+circuits}, we instead consider the initial degeneration $\initial_{|X|}(L)$ of $L$ with respect to $|X|$.
    By taking the initial ideal of \eqref{eq:real+ideal}, we see that for all $|X| \in R$ this is a variety in $\RR^4$ cut out by the ideal
    \[
    \initial_{|X|}(I_N) = \langle Z_2 + Z_3, Z_2 - Z_4, Z_3 + Z_4 \rangle \subseteq \RR[Z_1, Z_2,Z_3,Z_4] \, .
    \]
    From this, we deduce that $\initial_{|X|}(L)$ is the real linear space 
    \[
\initial_{|X|}(L) = \cV^*(M^{|X|})  = \{a\cdot(1, 0, 0, 0) + b \cdot(0, 1, -1, 1) \, \mid \, a,b \in \RR\} \subset \RR^4 \, .
    \]
    Combined with \eqref{eq:linear+degen}, we deduce that the subset $\tilde{R} \subseteq \cL_M$ that maps onto $R$ under $|\cdot|$ is
    \begin{align*}
    \tilde{R} &= \tilde{R}^\circ \cup \tilde{R}^\infty = \left\{X \in \cL_M \, \mid \, |X| \in R \right\} \\
\text{where} \quad   \tilde{R}^\circ &= \{[(a,\eta), (b,\gamma), (-b,\gamma), (b,\gamma)] \in \TT_\pm^4 \, \mid \, \gamma, \eta \in \RR\, , \,\gamma < \eta \, , \, a, b \in \RR^\times\} \\
   \tilde{R}^\infty &= \{[\infty, (b,\gamma), (-b,\gamma), (b,\gamma)] \in \TT_\pm^4 \, \mid \, \gamma \in \RR \, , \, b \in \RR^\times\} \, .
    \end{align*}
    Again, the set $\tilde{R}^\circ$ maps precisely onto the open part of $R$, while $\tilde{R}^\infty$ maps onto the closed point at infinity.
      We can again consider this as a `field analogue' of Viro's patchworking, where we unfold $R$ to $\tilde{R}$ through $\RR$.

Repeating this calculation across all pieces of $\val(L)$, we obtain the seven linear spaces over $\RR$ associated to each of the seven pieces of $\val(L)$, each an initial degeneration of the original linear space $L$:
        \begin{align*}
    [1] \quad &\colon \quad \initial_{|X|}(L) = \rm{span}_\RR\{(1, 0, 0, 0) , (0, 1, -1, 1)\} \\
    [2] \quad &\colon \quad \initial_{|X|}(L)  = \rm{span}_\RR\{(0, 1, 0, 0) , (1, 0, -2, 2)\} \\
    [3] \quad &\colon \quad \initial_{|X|}(L)  = \rm{span}_\RR\{(1, -2, 0, 0) , (1, 0, -2, 2)\} \\
    [4] \quad &\colon \quad \initial_{|X|}(L)  = \rm{span}_\RR\{(1, -2, 0, 0) , (0, 0, 1, -1)\} \\
    [5] \quad &\colon \quad \initial_{|X|}(L)  = \rm{span}_\RR\{(1, -2, 0, 1) , (0, 0, 1, 0)\} \\
    [6] \quad &\colon \quad \initial_{|X|}(L)  = \rm{span}_\RR\{(1, -2, 1, 0) , (1, -2, 0, 1)\} \\
    [7] \quad &\colon \quad \initial_{|X|}(L)  = \rm{span}_\RR\{(1, -2, 1, 0) , (0, 0, 0, 1)\} \\
    \end{align*}
    These completely characterise $\cL_M = \fval(L)$ and gives us an `unfolding' of $\val(L) \subset \TT^4$ into $\fval(L) \subset \RR[\RR]^4$.

\end{example}

%

%

\bibliographystyle{plain}
\bibliography{references}

\end{document}